\author{Nicolò De Ponti\thanks{Dipartimento di Matematica e Applicazioni, Universit\`a di Milano--Bicocca,\\email: nicolo.deponti(at)unimib.it}\,, Sara Farinelli\thanks{Lagrange Mathematics and Computation Research Center,\\email: sfarinel(at)sissa.it}\,, Ivan Yuri Violo\thanks{Centro di Ricerca Matematica Ennio De Giorgi, Scuola Normale Superiore di Pisa,\\email: ivan.violo(at)sns.it}}
\newcommand{\eps}{\varepsilon}
\newcommand{\rr}{\mathbb{R}}
\newcommand{\nn}{\mathbb{N}}
\newcommand{\nchi}{{\raise.3ex\hbox{$\chi$}}}
\newcommand{\sfd}{{\sf d}}
\renewcommand{\phi}{\varphi}
\newcommand{\restr}[1]{\lower3pt\hbox{$|_{#1}$}}
\newcommand{\X}{{\rm X}}
\newcommand{\fr}{\penalty-20\null\hfill$\blacksquare$} 
\definecolor{mygray}{gray}{0.9}
\newcommand{\diam}{\text{diam}}
\newcommand{\mea}{\mathfrak{m}}
\newcommand{\mm}{\mathfrak{m}}
\newcommand{\Per}{{\rm{Per}}}
\newcommand{\LIP}{\mathsf{LIP}}
\renewcommand{\d}{{\mathrm d}}
\newcommand{\loc}{\mathsf{loc}}
\newcommand{\W}{\mathit{W}^{1,2}}
\newcommand{\supp}{\mathop{\rm supp}\nolimits} 
\newcommand{\D}{{\sf D}}
\newcommand{\lip}{{\rm lip}}
\newcommand{\Xdm}{(\X,\sfd,\mm)}
\newcommand{\Leb}{{\mathscr L}}      
\newcommand{\RCD}{\mathrm{RCD}}
\newcommand{\R}{\mathbb{R}}
\newcommand{\rcd}{\mathrm{RCD}}
\newcommand{\N}{\mathbb{N}}
\newcommand{\cN}{{\mathcal N}}
\newcommand{\dom}{{\sf D}}
\newcommand{\Y}{{\rm Y}}
\newcommand{\cD}{{\ensuremath{\mathcal D}}}
\newcommand{\rmCh}{{\sf Ch}}
\renewcommand{\cal}{\mathcal}
\renewcommand{\limsup}{\varlimsup}
\renewcommand{\liminf}{\varliminf}
\newcommand{\essb}{\partial^e }
\newcommand{\hau}{\mathcal H}
\newcommand{\sfdomega}{\sfd \restr{\overline{\Omega}}}
\newcommand{\mmomega}{\mm \restr{\overline{\Omega}}}
\theoremstyle{plain}
\newtheorem{theorem}{Theorem}[section]
\newtheorem{lemma}[theorem]{Lemma}
\newtheorem{prop}[theorem]{Proposition}
\newtheorem{proposition}[theorem]{Proposition}
\newtheorem{cor}[theorem]{Corollary}
\theoremstyle{definition}
\newtheorem{definition}[theorem]{Definition}
\newtheorem{remark}[theorem]{Remark}
\numberwithin{equation}{section}
\title{Pleijel nodal domain theorem in non-smooth setting}
\date{}
\begin{document}
\maketitle

\bigskip

\begin{abstract}
	We prove  the Pleijel theorem in  non-collapsed RCD spaces, providing an asymptotic upper bound on the number of nodal domains of Laplacian eigenfunctions. As a consequence, we obtain that the Courant nodal domain theorem holds except at most for a finite number of eigenvalues. More in general, we show that the same result is valid for {Neumann (resp. Dirichlet)} eigenfunctions on uniform domains (resp. bounded open sets). This is new even in the Euclidean space, where the Pleijel theorem in the Neumann case was open under low boundary-regularity.
\end{abstract}		
 
	\vskip .9cm

\setcounter{tocdepth}{1}
\tableofcontents

\section{Introduction}
Given a continuous eigenfunction $u$ of a linear operator $L$, there is a lot of interest in studying the properties of its nodal domains, the latter being defined as the connected components of the set $\{u\neq 0\}$. In the usual setting $L$ is an operator of differential nature, with discrete spectrum $\lambda_1\le \lambda_2\le \ldots\le \lambda_k\le\ldots,$ and a classical problem is to bound the number of nodal domains of $u_k$ in terms of $k$. Here $u_k$ is an eigenfunction of eigenvalue $\lambda_k$.

There are two main results known in this direction. The first one, due to Courant \cite{Co23} (see also \cite{CoHi53}), provides a pointwise bound: \emph{for every} $k$ the number of nodal domains of $u_k$ is less than or equal to $k$. The second one is due to Pleijel \cite{Ple56} and provides an asymptotic upper bound, which implies that \emph{for sufficiently large} $k$ the number of nodal domains of $u_k$ is strictly less than $k$.

The theorems of Courant and Pleijel have been deeply investigated in a different number of situations, see \cite{AHH,BBGRS,BeHe18,BM82,Bou15,Char18,CDDG,DGL01,Donn14,HaSh23,KS20,Le19,Pol09,Stein14} for a non-exhaustive list. The main goal of the present paper is to obtain an asymptotic upper bound on the number of nodal domains for Dirichlet and Neumann Laplacian eigenfunctions in the setting of possibly non-smooth metric measure spaces.
In particular, we focus on the class of $\rcd(K,N)$ spaces  consisting of metric measure spaces satisfying a synthetic notion of Ricci curvature bounded from below by $K$ and dimension bounded from above by $N$ (see \cite{AmbICM} and Section \ref{sec:rcd} for more details). Nevertheless, our analysis is of interest already in the Euclidean case since we prove the Pleijel theorem for Lipschitz and even more rough domains (see Corollary \ref{cor: mainEucl} below and the subsequent discussion). Indeed, the validity of a Pleijel result in the Neumann case with boundary regularity below $C^{1,1}$ was an open question in the field  (see the comments after Remark 1.2 in \cite{HaSh23}).

About the Courant nodal domain theorem, let us just briefly mention that its validity is open for $\rcd$ spaces. This is mainly due to the fact that the \emph{weak unique continuation property} for the Laplacian in this setting is currently not known. We refer to \cite{DZ23,DZsec} for more on this problem, where also the failure of the \emph{strong} unique continuation property in the $\rcd$ setting is shown. We remark that a worse, but still pointwise, upper bound on the number of nodal domains can be easily deduced from the variational characterization of the eigenvalues (see \cite{KS20}).

\vspace{3mm}

Before stating our main result, let us first introduce the setting and some notations referring to Section \ref{sec:pre} for the precise definitions. Our investigation deals with eigenfunctions of the Dirichlet or Neumann Laplacian $\Delta_\cD$, $\Delta_\cN$  in a bounded domain $\Omega\subset \X$ in a  $\rcd(K,N)$ space $(\X,\sfd,\hau^N)$,  where $\hau^N$ denotes the $N$-dimensional Hausdorff measure in $(\X,\sfd)$. As usual in this kind of problems, some additional assumptions are required to deal with the Neumann case and we will demand that $\Omega$ is a \textit{uniform domain} (see Definition \ref{def: unif dom}). We will clarify below why we need to restrict our attention to RCD spaces endowed with the Hausdorff measure, called \textit{non-collapsed} in the literature, instead of considering the full RCD class.   Here we limit ourselves to mention that these assumptions are sufficient for the Dirichlet and Neumann Laplacian in $\Omega$ to have discrete spectrum and for the eigenfunctions to be continuous. We list the Dirichlet and Neumann eigenvalues respectively by
\[
\begin{split}
    &0\leq\lambda_1^\cD(\Omega)\le \lambda_2^\cD(\Omega)\le \dots \le\lambda_k^\cD(\Omega)\le \dots \rightarrow +\infty,\\
    &0=\lambda_1^\cN(\Omega)\le \lambda_2^\cN(\Omega)\le \ldots\le \lambda_k^\cN(\Omega)\le\ldots\rightarrow +\infty,
\end{split}
\]counted with multiplicity. Thanks to the continuity of a Laplacian eigenfunction $u$ in our setting, it makes sense to define its \emph{nodal domains}, which are the connected components of $\Omega \setminus \{u=0\}$.
For any $k\in \N$ we can now define
$$M_\Omega^\cD(k):=\sup\left\{\# \text{ of nodal domains of } u : u \ \textrm{Dirichlet eigenfunction of eigenvalue } \ \lambda_k^\cD(\Omega)\right\}$$
and analogously $M_\Omega^\cN$ in the Neumann case (see Def. \ref{def:counting} for more detailed definitions of  $M_\Omega^\cD,M_\Omega^\cN$).
We finally denote by $j_{\alpha}$ the first positive zero of the Bessel function of index $\alpha>0$ and by $\omega_N$ the volume of the unit ball in the $N$-dimensional Euclidean space.

\begin{theorem}[Pleijel theorem in RCD setting - Neumann and Dirichlet cases]\label{main th}
Let $(\X,\sfd,\hau^N)$ be an $\rcd(K,N)$ space, with $K\in \R$ and $N\ge 2$, and let $\Omega\subset \X$ be an open and bounded set. Then
\begin{equation}\label{eq: main resultDir}
\limsup_{k\to +\infty} \frac{M_\Omega^\cD(k)}{k}\le \frac{(2\pi)^N}{\omega^2_N j^N_{\frac{(N-2)}{N}}}<1\, .
\end{equation}
If moreover $\Omega\subset \X$ is a uniform domain, then
\begin{equation}\label{eq: main resultNeum}
\limsup_{k\to +\infty} \frac{M_\Omega^\cN(k)}{k} \le \frac{(2\pi)^N}{\omega^2_N j^N_{\frac{(N-2)}{N}}}<1\, .
\end{equation}
In particular, for every $k \in \nn$ large enough every Dirichlet (resp. Neumann) eigenfunction of eigenvalue $\lambda_k^\cD(\Omega)$ (resp.\ $\lambda_k^\cN(\Omega)$) in any $\Omega$ bounded open set (resp.\ uniform domain) has less than $k$ nodal domains.
\end{theorem}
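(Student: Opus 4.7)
The plan is to follow the classical Pleijel strategy: combine a sharp Faber--Krahn inequality applied to each nodal domain with Weyl's asymptotic for the eigenvalue counting function on $\Omega$.

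\textbf{Dirichlet case.} Let $u$ be any Dirichlet eigenfunction of eigenvalue $\lambda_k^\cD(\Omega)$ and denote its nodal domains by $\Omega_1,\ldots,\Omega_m$. Since $u$ vanishes on $\partial\Omega$ and on the interior zero set, the restriction $u|_{\Omega_i}$ lies in the form domain of $\Delta_\cD$ on $\Omega_i$ and is an eigenfunction of eigenvalue $\lambda_k^\cD(\Omega)$, so $\lambda_1^\cD(\Omega_i)\le \lambda_k^\cD(\Omega)$ for every $i$. I would then apply the sharp Faber--Krahn inequality available for non-collapsed $\rcd(K,N)$ spaces,
\[
\lambda_1^\cD(\Omega_i)\ \ge\ \frac{\omega_N^{2/N}\,j^2_{(N-2)/N}}{\hau^N(\Omega_i)^{2/N}},
\]
which is obtained by combining the L\'evy--Gromov isoperimetric inequality (Cavalletti--Mondino) with a spherical-decreasing rearrangement in the spirit of B\'erard--Meyer. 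Solving for $\hau^N(\Omega_i)$ and summing over $i$ using $\sum_i \hau^N(\Omega_i)\le \hau^N(\Omega)$ yields
\[
m\ \le\ \frac{\hau^N(\Omega)\,\lambda_k^\cD(\Omega)^{N/2}}{\omega_N\, j^N_{(N-2)/N}}.
\]
Dividing by $k$ and sending $k\to\infty$, Weyl's law for the Dirichlet Laplacian on bounded open subsets of non-collapsed $\rcd(K,N)$ spaces gives $\lambda_k^\cD(\Omega)^{N/2}/k\to (2\pi)^N/(\omega_N\hau^N(\Omega))$, and substitution produces \eqref{eq: main resultDir}.

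\textbf{Neumann case.} The Dirichlet argument applies verbatim to those nodal domains $\Omega_i$ whose closure does not meet $\partial\Omega$, since there $u|_{\Omega_i}$ is still a genuine Dirichlet eigenfunction of eigenvalue $\lambda_k^\cN(\Omega)$. The novelty lies in the \emph{boundary} nodal domains, on which $u$ satisfies mixed Dirichlet--Neumann conditions and the above Faber--Krahn bound does not directly apply. The plan is to show that the number $m_\partial$ of such domains is $o(k)$ as $k\to\infty$, so they do not contribute to the limit. Here the uniform-domain hypothesis enters: via the associated extension operator one reduces the eigenvalue problem on a boundary nodal domain to a Dirichlet-type problem on a slightly enlarged set whose volume is still controlled by $\hau^N(\Omega_i)$ with constants independent of $k$. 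A covering/packing argument on $\partial\Omega$, together with its Ahlfors $(N-1)$-regularity (available under the uniform-domain assumption), then bounds $m_\partial$ by $\hau^{N-1}(\partial\Omega)\,\lambda_k^{(N-1)/2}$, which is $o(\lambda_k^{N/2})=o(k)$ by the Neumann Weyl law. Combining this with the Dirichlet-type count on the interior nodal domains yields \eqref{eq: main resultNeum}.

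\textbf{Main obstacle.} The critical and most delicate step is the treatment of boundary nodal domains in the Neumann case: one must produce a definite lower bound on their volume and count them in terms of the size of $\partial\Omega$, all under the mild uniform-domain regularity (below $C^{1,1}$). This is precisely where an extension theorem for uniform domains and the Ahlfors regularity of their boundaries in the non-smooth setting must be pressed into service; it is also where the improvement with respect to previous Euclidean results is expected to come from. A secondary, more standard point is the validity of Weyl's law for the Neumann Laplacian on such domains inside non-collapsed $\rcd(K,N)$ spaces, which in turn relies on the compactness of the inclusion $H^{1,2}(\Omega)\hookrightarrow L^2(\Omega)$ provided by the uniform-domain hypothesis.
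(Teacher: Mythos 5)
There are two genuine gaps in your proposal, and they concern precisely the points where the paper has to work hardest. First, the ``sharp Faber--Krahn inequality'' $\lambda_1^\cD(\Omega_i)\ge \omega_N^{2/N}j^2_{(N-2)/N}\,\hau^N(\Omega_i)^{-2/N}$ that you invoke for all nodal domains is not available in a non-collapsed $\rcd(K,N)$ space, and is in general false: at singular points the density satisfies $\theta_N<1$ (think of a metric cone with small opening angle), so even small balls centred there fail the Euclidean-constant isoperimetric inequality, and for $K<0$ there is no global Euclidean-type bound either. The L\'evy--Gromov/Cavalletti--Mondino inequality compares with the model space of curvature $K$ and does not produce the Euclidean constant that makes $(2\pi)^N/(\omega_N^2 j^N_{(N-2)/N})<1$. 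What actually holds (already in B\'erard--Meyer's smooth case, and a fortiori here) is only an \emph{almost}-Euclidean Faber--Krahn inequality, valid for sets of sufficiently small volume which in addition avoid, in a quantitative sense, a closed ``bad'' set $C_\eps\supset\{\theta_N\le 1-\eps\}$ of measure zero. Because of this, the Pleijel count cannot be run uniformly over all nodal domains: one must split them into those of large volume, those charging the bad set too much, and the remaining ones, show the first two families contribute $o(k)$, apply the almost-Euclidean bound to the third, and only at the end send the two smallness parameters to zero. Your argument, which applies one global sharp inequality and sums, skips all of this and therefore does not go through.

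Second, your treatment of the Neumann boundary nodal domains does not work as stated. Uniform domains need not have Ahlfors $(N-1)$-regular boundaries -- the Koch snowflake, explicitly covered by the theorem, has $\hau^{N-1}(\partial\Omega)=+\infty$ -- so the claimed bound $m_\partial\lesssim \hau^{N-1}(\partial\Omega)\lambda_k^{(N-1)/2}$ is vacuous; moreover, even for Lipschitz boundaries it is not clear how an extension operator would yield a Dirichlet-type problem on an enlarged set with a volume lower bound for each boundary nodal domain: controlling the mixed Dirichlet--Neumann eigenvalue of small domains touching the boundary is exactly the obstruction that forced $C^{1,1}$ regularity in the reflection approach of L\'ena. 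The paper's route is different: since $\Omega$ is uniform, the metric measure space $(\overline\Omega,\sfd\restr{\overline\Omega},\mm\restr{\overline\Omega})$ is an infinitesimally Hilbertian PI space, the Neumann eigenfunction becomes a genuine Laplacian eigenfunction of that space, and \emph{every} nodal domain (boundary ones included) then carries zero Dirichlet conditions relative to $\overline\Omega$, so a single Faber--Krahn inequality proved in the space $\overline\Omega$ (again only almost-Euclidean, for small volumes, away from a bad set) applies uniformly, with the Weyl law in the Dirichlet form used via $\lambda_k^\cN(\Omega)\le\lambda_k^\cD(\Omega)$. Without this change of viewpoint, or some substitute giving a volume lower bound for boundary nodal domains under mere uniformity, your Neumann argument has no way to control $m_\partial$.
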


There has been recently a growing interest in the study of  eigenvalues and eigenfunctions of the Laplacian and their zero set in the setting of RCD spaces (see  \cite{AHPT21,AH18,H23,HM21,DePF22,BF22,CF21,DZ23,ZZ,AHT,DZsec}). However, to the best of our knowledge,  Theorem \ref{main th}  is the first non-trivial result related to nodal domains.

The class of non-collapsed $\rcd(K,N)$ space includes non-collapsed Ricci limit spaces \cite{CC1,CC2}  and finite dimensional Alexandrov spaces \cite{Pet,ZZ10}, and our result is new also for these classes of spaces where the Courant's nodal domain theorem is not known. 
Additionally, thanks to the recent \cite{Raj20}, we know that every $\rcd(K,N)$ space contains a rich class of non-trivial uniform domains, hence it is possible to find many sets that satisfy the assumptions of our result also in the Neumann case.
We recall that, in the somewhat easier Dirichlet case, Theorem \ref{main th} goes back to the work of Pleijel \cite{Ple56} in the Euclidean plane and to B\'{e}rard and Meyer \cite{BM82} for smooth Riemannian manifolds.

When $\X$ is bounded it is  allowed to take $\Omega=\X$ in the Theorem \ref{main th}. In this case Neumann eigenfunctions coincide with the usual Laplacian eigenfunctions on $\X$ and we have the following.
\begin{cor}\label{cor: mainRCD}
Let $(\X,\sfd,\hau^N)$ be a compact $\rcd(K,N)$ space, with $K\in \R$ and $N\ge 2$. Denote by $\{\lambda_k\}_{k \in \nn}$ the eigenvalues of the Laplacian in $\X$ and by  $M(k)$ the maximal  number of nodal domains of any Laplacian eigenfunction of eigenvalue $\lambda_k.$ 
Then
\begin{equation}\label{eq: mainRCD}
\limsup_{k\to +\infty} \frac{M(k)}{k} \le \frac{(2\pi)^N}{\omega^2_N j^N_{\frac{(N-2)}{N}}}<1\, .
\end{equation}
\end{cor}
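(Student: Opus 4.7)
My plan is to derive Corollary \ref{cor: mainRCD} as a direct specialization of the Neumann case of Theorem \ref{main th} with $\Omega = \X$, reducing everything to checking two elementary compatibility facts.

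First, I would verify that a compact (hence bounded) $\rcd(K,N)$ space $\X$ is a uniform domain in itself, in the sense of Definition \ref{def: unif dom}. Since the topological boundary of $\X$ inside $\X$ is empty, all conditions in the definition that involve the distance to $\partial \Omega$ are vacuously satisfied: any two points of $\X$ are joined by a minimizing geodesic (which exists by completeness and the length-space property of $\rcd$ spaces), and this curve trivially fulfils the length and boundary-distance estimates required of a uniform curve. Hence $\Omega = \X$ is an admissible choice in \eqref{eq: main resultNeum}.

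Second, I would check that for this choice the Neumann Laplacian $\Delta_\cN$ coincides with the Laplacian on $\X$ built from the global Cheeger energy. This is a consequence of the construction of $\Delta_\cN$ as the non-positive self-adjoint operator associated with the Dirichlet form
\[
\tfrac{1}{2}\int_\Omega |\nabla u|^2 \, \d \hau^N, \qquad u \in \W(\Omega),
\]
which, when $\Omega = \X$, reduces verbatim to the global Cheeger energy on $\W(\X)$. Therefore the Neumann spectrum $\{\lambda_k^\cN(\X)\}_k$ equals $\{\lambda_k\}_k$ and the associated eigenfunctions coincide, giving $M_\X^\cN(k) = M(k)$ for every $k \in \nn$.

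Combining the two points above with \eqref{eq: main resultNeum} applied to $\Omega = \X$ immediately yields \eqref{eq: mainRCD}. I do not expect any genuine obstacle: the whole argument is a sanity-check that Theorem \ref{main th} applies to the full space. The only point that might need a line of justification is the self-uniformity of $\X$, but since the boundary is empty this is essentially a definitional matter once the formulation of Definition \ref{def: unif dom} is recalled.
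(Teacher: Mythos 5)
Your argument is correct and matches the paper's own proof: the paper likewise takes $\Omega=\X$ in the Neumann part of Theorem \ref{main th}, noting that the compact space $\X$ is trivially a uniform domain in itself and that $\Delta_\cN=\Delta$ when $\Omega=\X$ (since $\W_0(\X)=\W(\X)$, the Neumann form is the global Cheeger energy). Your two compatibility checks are exactly the justifications the paper invokes, so there is nothing to add.
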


The result in Theorem \ref{main th} in the case of  Neumann eigenfunctions is interesting already when taking $(\X,\sfd,\hau^N)$ to be the $N$-dimensional Euclidean space. We extract this version below in a self-contained statement, for the convenience of the reader. 
\begin{cor}\label{cor: mainEucl}
Let $\Omega\subset \rr^N$, $N\ge2,$ be a uniform domain. For every $k\in \nn$ denote by $\lambda_k^\cN(\Omega)$ the Neumann Laplacian eigenvalues in $\Omega$ and by $M_\Omega^\cN(k)$ the maximal  number of nodal domains of a Neumann eigenfunction of eigenvalue $\lambda_k^\cN(\Omega).$   Then
\begin{equation}\label{eq: mainEucl}
\limsup_{k\to +\infty} \frac{M_\Omega^\cN(k)}{k} \le \frac{(2\pi)^N}{\omega^2_N j^N_{\frac{(N-2)}{N}}}<1.
\end{equation}
\end{cor}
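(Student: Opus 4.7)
The plan is to obtain Corollary \ref{cor: mainEucl} as a direct instance of the Neumann part of Theorem \ref{main th}, so the work is to verify that the Euclidean objects in the statement match the abstract ones. First I would observe that $(\rr^N,|\cdot|,\hau^N)$ is a non-collapsed $\rcd(0,N)$ space: it is a smooth complete Riemannian manifold of dimension $N$ with non-negative Ricci curvature, and $\hau^N$ coincides (up to the standard normalization) with the Lebesgue measure $\Leb^N$. Hence the structural hypotheses of Theorem \ref{main th} are satisfied with $K=0$ and the fixed $N\ge 2$.

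Next I would identify the spectral quantities. The Neumann Laplacian $\Delta_\cN$ on $\Omega$ used in Theorem \ref{main th} is defined via the Cheeger energy of the metric measure space $(\overline{\Omega},|\cdot|,\hau^N\restr{\overline{\Omega}})$, see Section \ref{sec:pre}. For a uniform domain $\Omega\subset\rr^N$ this abstract Sobolev space is precisely the classical $W^{1,2}(\Omega)$ (this is one of the key features of uniformity that motivates the restriction to this class in the Neumann setting), so the abstract spectrum and eigenfunctions coincide with the classical Neumann spectrum and eigenfunctions of $-\Delta$ on $\Omega$. In particular the Neumann eigenfunctions are continuous on $\Omega$, as already required in the abstract framework.

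Finally, the definition of nodal domain — the connected components of $\Omega\setminus\{u=0\}$ — is purely topological with respect to the Euclidean topology and therefore agrees in the two formulations, so the counting function $M^\cN_\Omega(k)$ of Corollary \ref{cor: mainEucl} matches the one in Theorem \ref{main th}. Plugging these identifications into \eqref{eq: main resultNeum} gives \eqref{eq: mainEucl}.

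The only genuinely non-trivial point is the identification of the Cheeger--Sobolev space on $(\overline{\Omega},|\cdot|,\hau^N\restr{\overline{\Omega}})$ with the classical $W^{1,2}(\Omega)$; this is the main reason uniformity of $\Omega$ is assumed and is the sole step that truly needs to be invoked, the rest of the argument being a bookkeeping translation between the two languages.
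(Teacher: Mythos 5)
Your proposal is correct and follows essentially the same route as the paper: observe that $(\rr^N,|\cdot|,\hau^N)$ is a non-collapsed $\rcd(0,N)$ space (Remark \ref{rmk:rcd ok}), identify the metric-measure Neumann eigenvalues/eigenfunctions with the classical ones, and invoke \eqref{eq: main resultNeum}. One small correction: in Theorem \ref{main th} the Neumann spectrum is defined via $\W(\Omega)$ in the ambient space (Definition \ref{def:neum lapl}), and its compatibility with the classical Neumann problem is Remark \ref{rmk:eucl lapl} (Shanmugalingam's theorem, valid for \emph{any} open $\Omega$), so uniformity enters only as a hypothesis of Theorem \ref{main th} itself — the identification of the Cheeger space on $\overline{\Omega}$ with $W^{1,2}(\Omega)$ that you single out is internal to the proof of that theorem, not needed for the deduction of the corollary.
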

Recall that the class of uniform domains in the Euclidean space includes bounded \emph{Lipschitz domains}, but also more irregular domains such as \emph{quasi disks} and in particular the interior of a \emph{Koch Snowflake} (see Section \ref{def: unif dom} for more details and references). 
A Pleijel theorem for Neumann eigenfunctions of Euclidean domains was firstly proved by Polterovich \cite{Pol09}, who considered planar domains with piecewise real analytic boundary. The general $N$-dimensional case was obtained in \cite{Le19} for domains $\Omega$ with $C^{1,1}$ boundary, where the regularity assumption is required in order to apply to eigenfunctions a reflection procedure across the boundary of $\Omega$. The same limitation on the regularity of the boundary appears in \cite{HaSh23} (in the context of more general Robin problems), where it is explicitly stated the problem of the validity of Pleijel theorem under a weaker regularity of the boundary. Very recently, the techniques introduced in \cite{Le19} were employed and refined in \cite{BCM23} in the planar case, where the authors were able to treat $2$-dimensional domains with smooth boundaries except for a finite number of vertices. 

Our work introduces a different strategy and avoids any reflection argument, allowing us to handle more general domains without imposing any restriction on the dimension.  To explain the basic idea of our method we recall that a key step in the original proof of the Pleijel theorem is to exploit the fact that an eigenfunction $u$ in $\Omega$,  when restricted to one of its nodal domains $U\subset \Omega$,  satisfies a zero-Dirichlet boundary condition in $U$ itself, thus allowing to apply the Faber-Krahn inequality and get a lower bound for the volume of $U$. While this is true for a Dirichlet-eigenfunction and for all its nodal domains, it is in general \textit{false} for a Neumann eigenfunction and a nodal domain that touches the boundary. The reflection procedure in \cite{Le19} is needed precisely  to handle this issue, but requires smoothness of the boundary. 
Instead our observation is that, by the very definition of nodal domain, an eigenfunction $u$ (even in the Neumann case) has indeed zero-Dirichlet boundary conditions in $U$ but \textit{relative to the ambient domain $\Omega$}, i.e.\ ignoring the portion of $\partial U$ which is contained in $\partial \Omega$.  The key point is then to view $\Omega$ as a metric space in its own right and prove that it is regular enough to satisfy a version of the Faber-Krahn inequality, which then allows to carry out the rest of the argument. This is where the uniform condition will enter into play ensuring the required analytical properties of $\Omega$.

Even if we use mostly techniques coming from the metric setting, we also develop some purely-Euclidean technical tools that we believe could be useful to show other Pleijel-type results in $\rr^N$ under low boundary-regularity. In particular, we prove 
a Faber-Krahn-tpye inequality  and a Green's formula for eigenfunctions of uniform domains (see Corollary \ref{cor:FKeucl} and Corollary \ref{cor:rayleig-on-nodal} respectively). Both the results were previously available only assuming $C^{1,1}$-boundary.

\vspace{3mm}
We now comment further on the assumptions and the proof of Theorem \ref{main th}.

The uniformity hypothesis on the domain   guarantees the discreteness of the spectrum of the Neumann Laplacian, a fact even needed to state the theorem. Additionally we will make a crucial use of analytical properties of uniform domains in metric measure spaces, such as Sobolev extension properties, the validity of a Poincar\'e inequality and a Sobolev embedding  (see Section \ref{sec:unif domain} for more details).

The non-collapsed assumption is more technical in nature, and we leave for future investigations the general case of possibly collapsed $\rcd(K,N)$ spaces. Let us notice that in collapsed $\RCD$ spaces the spectrum of the Laplacian can produce a singular and in some sense unexpected behaviour in the asymptotic regime (see the recent \cite{DHPW}), and thus this generalization seems non-trivial as we are going to further clarify in the next lines commenting the proof.

\vspace{3mm}
The main scheme of the proof of Theorem \ref{main th} is similar to the one usually employed in the smooth setting, e.g.\ in \cite{Le19,Stein14,Ple56,HaSh23,Donn14,BM82}. The two primary ingredients are the Weyl law and an almost-Euclidean Faber-Krahn inequality for small volumes. 

The Weyl law has already been investigated in the setting of $\RCD(K,N)$ spaces (see \cite{AHT} and \cite{ZZ}). In the non-collapsed case it takes the usual formulation
$$\lim_{\lambda\to +\infty} \frac{N(\lambda)}{\lambda^{N/2}}=\frac{\omega_N}{(2\pi)^N} \hau^N(\Omega)\,,$$
where $N(\lambda):=\sharp\{k\in \N : \lambda^{\cD}_k(\Omega)\le \lambda\}$ is the eigenvalues counting function and $\{\lambda^{\cD}_k(\Omega)\}_{k \in \nn}$ are the Dirichlet eigenvalues of the domain $\Omega$ (see Definition \ref{def:dir lapl}). We stress that the Weyl law in the  Dirichlet case is sufficient for our purposes, even if in our main statement we consider both Dirichlet and Neumann eigenfunctions. This thanks to the elementary inequality $\lambda_k^\cN(\Omega)\le \lambda_k^\cD(\Omega)$ between Neumann and Dirichlet eigenvalues (see Lemma \ref{lem:discrete spectrum}).
We remark that suitable forms of the Weyl law on the whole space have been studied under slightly more general assumptions than non-collapsing, but the situation is more intricate and there exist compact $\RCD(K,N)$ spaces for which $N(\lambda)$ is not asymptotic to $\lambda^{\beta}$ for any $\beta\ge 0$. We refer to \cite{AHT,DHPW} for  the details. 

Concerning the almost-Euclidean Faber-Krahn inequality, it roughly states that the first Dirichlet eigenvalue of an open set $U\subset \X$, of sufficiently small volume, is bounded below by the first Dirichlet eigenvalue of the Euclidean ball having the same volume and up to a small error. This will be obtained starting from an almost-Euclidean isoperimetric inequality for small volumes (similar to the one obtained in \cite{BM82} in the smooth setting)  and  rearrangement methods.  In contrast with the proof in the smooth case, our situation requires to deal with a  set $C$ of possibly “bad” points, and to work with sets $U$ that stay sufficiently far from $C$. We refer to Theorem \ref{thm:faber} and Theorem \ref{th:almost-iso} for the precise statements, and we suggest to compare them with \cite[Lemme 16,15]{BM82}. For both these results the non-collapsed assumption also plays a key role to ensure a more regular infinitesimal behaviour of the ambient space.

\section*{Acknowledgements}
The authors thank Asma Hassannezhad for helpful discussions.  The third author was supported by the Academy of Finland project Incidences on Fractals, Grant No.321896.

\section{Preliminaries}\label{sec:pre}

\subsection{Calculus  in metric measure spaces}\label{sec:calculus}
 The triple $\Xdm$ will denote a metric measure space, where $(\X,\sfd)$ is a complete and separable metric space and $\mm$ is a non-negative Borel measure, finite on bounded sets. We will also always assume  $\supp(\mm)=X$. For every set $A\subset \X$ we will denote by $\overline A$  its topological closure, by $A^c\coloneqq \X\setminus A$ its complement and by $\partial A$ its topological boundary. We denote by $B_r(x):=\{y\in \X: \sfd(x,y)<r\}$ the ball of radius $r$ and center $x$. The same set is also denoted by $B^{\X}_r(x)$ whenever we want to emphasize the role of the space $\X$. By $\sfd(A,B):=\inf\{\sfd(x,y) \ : \ x\in A, y\in B\}$ we denote the distance between two sets $A,B\subset \X$, so that $\sfd(A,\emptyset)=+\infty$. The open $\eps$-enlargement of a set $A\subset \X$ is denoted by $A^{\epsilon}:=\{x\in \X : \sfd(A,x)<\eps\}$. Given a set $C\subset \X$, we denote by $\sfd\restr{C}:=\sfd\restr{C\times C}$ the restriction of the distance to the set $C$. We will say that $(\X,\sfd)$ is proper if closed and bounded subsets of $\X$ are compact.
 
 Given a metric space $(\X,\sfd)$ and a rectifiable curve $\gamma:[a,b]\to \X$, we denote by $l(\gamma)$ its length (see e.g.\ \cite[Chapter 5.1]{HKST}).  We say that $\gamma$ joins $x\in \X$ and $y\in \X$ if $\gamma(a)=x$ and $\gamma(b)=y.$

\begin{definition}[Nodal domain]\label{def:nodal domains}
    Let $(\X,\sfd)$ be a metric space, $A\subset \X$ be any subset and $f:A \to \rr$ be a continuous function. The \emph{nodal domains} of $f$ (in $A$) are the connected components of $A \setminus \{x \in A \ : \ f(x)=0\}.$ 
\end{definition}

In the next result we recall some elementary properties of nodal sets.
\begin{lemma}\label{lem:basic nodal}
    Let $(\X,\sfd)$ be a metric space, $A\subset \X$ be any subset and $f:A \to \rr$ be a continuous function. Let $U\subset A$ be a nodal domain of $f.$ Then either $f>0$ or $f<0$ in $U.$ Moreover if $A$ is open and $(\X,\sfd)$ is locally connected then $U$ is also open.
\end{lemma}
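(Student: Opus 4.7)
The plan is to handle the two assertions separately using only elementary point-set topology; no substantive use of the structure of $\X$ is needed for the first part.

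For the sign claim, note that by definition $U \subset A \setminus \{f=0\}$, so $f$ never vanishes on $U$. The preimages $\{f>0\}$ and $\{f<0\}$ of the open sets $(0,+\infty)$ and $(-\infty,0)$ under the continuous map $f\colon A\to\R$ are relatively open in $A$, hence their traces $U\cap\{f>0\}$ and $U\cap\{f<0\}$ are relatively open in $U$. These two sets are disjoint and their union is all of $U$, so the connectedness of $U$ forces one of them to be empty, which is exactly the dichotomy $f>0$ on $U$ or $f<0$ on $U$.

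For the openness claim, I would first check that $B:=A\setminus\{f=0\}$ is open in $\X$. The set $\{f=0\}=f^{-1}(\{0\})$ is closed in $A$ with respect to the subspace topology, so $B$ is open in $A$; since $A$ itself is open in $\X$, $B$ is also open in $\X$. At this point $U$ is a connected component of an open subset of a locally connected space, and the conclusion follows from the classical fact that in a locally connected space the connected components of any open set are themselves open: given $x\in U$, local connectedness produces a connected open neighborhood $V\subset B$ of $x$, and then $V\subset U$ because $U$ is the maximal connected subset of $B$ containing $x$.

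I do not expect any genuine obstacle here: the lemma is a routine unpacking of definitions. The only mild pitfall worth stating explicitly is to keep track of the subspace topology on $A$ when passing from ``open in $A$'' to ``open in $\X$'', which is why the openness of $A$ is a necessary hypothesis for the second part.
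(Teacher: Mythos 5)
Your proof is correct and follows essentially the same elementary route as the paper: the paper deduces the sign dichotomy from the fact that $f(U)$ is a connected subset of $\rr\setminus\{0\}$ (your clopen-decomposition argument is an equivalent phrasing), and for openness it simply cites the standard fact, which you prove directly, that in a locally connected space the connected components of an open set are open, after noting as you do that $A\setminus\{f=0\}$ is open in $\X$.
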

\begin{proof}
The set $U$ is connected by definition, hence $f(U)\subset \rr$ is also connected and  does not contain zero. It follows that $f(U)\subset (0,\infty)$ or $f(U)\subset (-\infty,0).$ For a proof that $(\X,\sfd)$  locally connected implies that $U$ is open whenever $A$ is open see e.g.\ \cite[Theorem 25.3]{munkres}. 
\end{proof}

For every open set $\Omega\subset \X$ we denote by  $\LIP(\Omega)$, $\LIP_\loc(\Omega)$ and $\LIP_c(\Omega)$ respectively the space of Lipschitz functions, locally Lipschitz functions and Lipschitz functions with compact support in $\Omega$. We also denote by $\LIP_{bs}(\Omega)$ the subset of $\LIP(\X)$ of functions having support bounded and contained in $\Omega.$  
The slope $\lip(f)(x)$ of a locally Lipschitz function $f\in \LIP_\loc(\Omega)$ at a point $x\in \Omega$ is defined as
$$\lip(f)(x):=\limsup_{y\to x} \frac{|f(y)-f(x)|}{\sfd(y,x)},$$
taken to be 0 when $x$ is isolated.
The slope satisfies the following Leibniz rule: $\lip(fg)\le f\lip (g)+g\lip (f)$, for every $f,g \in \LIP_\loc(\Omega).$

Given $p\in [1,\infty]$, we use the notation $L^p(\X,\mm)$ (resp.\  $L^p_{\loc}(\X,\mm)$) for the space of Lebesgue $p$-integrable (resp.\  $p$-locally integrable) real functions on $X$ endowed with the Borel $\sigma$-algebra. For brevity, the same function space is also denoted by $L^p(\mm)$. When $\Omega\subset \X$ is an open set, we set $L^p(\Omega):=L^p(\Omega, \mm\restr \Omega)$ where $\mm\restr \Omega$ is the restriction of the measure $\mm$ to $\Omega$. For a function $u \in L^p(\Omega)$ we define its essential support $\supp(u)$ as the smallest closed set $C$ such that $u=0$ $\mea$-a.e.\ in $\Omega\setminus C.$

The Cheeger energy $\rmCh \colon L^2(\mm)\to [0,\infty]$ is defined as the convex and lower semicontinuous functional 
\[ \rmCh(f) := \inf \Big\{ \liminf_{n\to\infty} \int_{\X} \lip^2(  f_n)\, \d \mm \colon (f_n) \subset L^2(\mm)\cap \LIP_\loc(\X), \lim_{n\to\infty}\|f-f_n\|_{L^2(\mm)} =0 \Big\}.\]
The Sobolev space $\W\Xdm$ (or $\W(\X)$ for short) is then defined as  $\W\Xdm:= \{ \rmCh<\infty\}$  equipped with the norm $\| f\|^2_{\W(\X)} := \|f\|_{L^2(\mm)}^2 + \rmCh(f)$, which makes it  a Banach space. This approach to the definition of Sobolev space was introduced in \cite{AGS13}, where it is also shown to be equivalent to the previous definitions given in \cite{Cheeger00,Shanmugalingam00}. For every $f\in \W(\X)$ there exists a notion of modulus of the gradient called \emph{minimal weak upper gradient}, minimal w.u.g.\ for short, denoted by $|Df|\in L^2(\mm)$ and satisfying
\[
\rmCh(f)=\int_{\X} |Df|^2\d \mm\,.
\]
For every $f\in\LIP_\loc(\X)$ we have $|Df|\le \lip(f)$ $\mm$-a.e.. Moreover, the following calculus rules are satisfied (see e.g.\ \cite{GP20}): for every $f,g\in \W(\X)$ it holds
\begin{equation}\label{eq:loc and leib}
\begin{split}
	&\text{\emph{locality:}  $|D f|=|Dg|$ $\mea$-a.e.\ in $\{f=g\}$,}\\
	& \text{\emph{chain rule:} for every $\phi\in\LIP(\rr)$ with $\phi(0)=0$,   $\phi(f)\in \W(\X)$   and $|D\phi(f)|=|\phi'(f)||Df|,$}\\
 & \text{\emph{Leibniz rule:} for every $\eta \in \LIP\cap L^\infty(\X)$,    $\eta f\in \W(\X)$   and $|D(\eta f)|\le |\eta||Df|+|D\eta||f|.$}
\end{split}
\end{equation}
Given $\Omega\subset \X$ open we also define the following local Sobolev spaces
\begin{equation*}
\begin{split}
    &\W_0(\Omega)\coloneqq \overline{\LIP_{bs}(\Omega)}^{\W(\X)},\\
    &\W(\Omega)\coloneqq \{ f \in L^2(\Omega) \ : \ f\eta \in \W(\X), \, \forall \eta \in \LIP_{bs}(\Omega), \, |Df|\in L^2(\Omega)\},
\end{split}
\end{equation*}
where in the definition of $\W(\Omega)$ the minimal w.u.g.\  $|Df|\in L^2(\Omega)$ is defined by
\begin{equation}\label{eq:wug local}
  |Df|\coloneqq |D(f\eta_n)|, \quad \text{$\mea$-a.e.\ in $\{\eta_n=1\}$} ,
\end{equation} 
with $\eta_n\in \LIP_{bs}(\Omega)$ is any sequence satisfying $\{\eta_n=1\}\uparrow \Omega$ (there is no dependence on the chosen sequence, by the locality property of the minimal weak upper gradient). We endow $\W(\Omega)$ with the norm given by
\[
\|f\|_{\W(\Omega)}^2\coloneqq \|f\|_{L^2(\Omega)}^2+\||D f|\|_{L^2(\Omega)}^2, 
\]
which  makes it a Banach space. Observe that by the Leibniz rule we have that for every $f\in \W(\X)$ it holds that $f\restr\Omega \in \W(\Omega)$ and also $|D f|\restr\Omega=|D f\restr\Omega|$ $\mea$-a.e.\ in $\Omega$ (by the locality). Moreover,  for every $f\in\W_0(\Omega)$ we have $f=0$ $\mm$-a.e.\ in $X\setminus \Omega$ and thus $\|f\restr\Omega\|_{\W(\Omega)}=\|f\|_{\W(\X)}$, which shows that the map 
\begin{equation}\label{eq:def isoSob}
T:\W_0(\Omega)\rightarrow \W(\Omega), \qquad T(f):=f\restr\Omega
\end{equation} 
is a linear isometry. For these reasons, with a little abuse of notation, sometimes we identify $\W_0(\Omega)$ with $T(\W_0(\Omega))\subset \W(\Omega)$ and think to $f\in \W_0(\Omega)$ as an element of $L^2(\Omega)$.

If we choose $\Omega=\X$, then $\W(\X)=\W(\X,\sfd,\mea)$ with the same norm and minimal w.u.g., so the notation is consistent with the one given above.
\begin{remark}\label{eq:equivalence}
It can be shown (see e.g.\ \cite[Remark 2.15]{AH18}) that $\W(\Omega)$ coincides, up to $\mea$-a.e.\ equivalence of functions, with the Newtonian Sobolev space $N^{1,2}(\Omega,\sfd,\mea\restr \Omega)$ defined in \cite{Shanmugalingam00,Cheeger00} (see also \cite{BB13}). The norms of the two spaces coincide as well thanks to the equivalence proved in \cite{AGS13} between the various notions of minimal weak upper gradients. \fr
\end{remark}

Following \cite{Gigli12} we say that $\Xdm$ is \emph{infinitesimally Hilbertian} if $\W(\X)$ is a Hilbert space or equivalently if  the Cheeger energy  satisfies the parallelogram identity:
 \begin{equation}
     {\rm Ch}(f+g)+{\rm Ch}(f-g)=2{\rm Ch}(f)+2{\rm Ch}(g), \quad \forall f,g \in \W(\X).
 \end{equation}  
 If $\Xdm$ is infinitesimally Hilbertian, then $\W(\Omega)$ is a Hilbert space as well for every $\Omega\subset \X$ open (see e.g.\ \cite[Remark A.3]{CaRo22}). 
Moreover, we can give a notion of \emph{scalar product between gradients} of functions $f,g \in \W(\Omega)$ by setting
\begin{equation}\label{eq:def scalar}
    L^1(\Omega)\ni \nabla f\cdot \nabla g\coloneqq \frac12 \left(|D(f+g)|^2-|Df|^2-|Dg|^2\right),
\end{equation}
which is bilinear and satisfies
\begin{equation}\label{eq:C-S}
\begin{split}
        &|\nabla f\cdot \nabla g|\le |Df||Dg|,\quad \mea\text{-a.e.,}\quad \forall\, f,g \in \W(\Omega),\\
            &|\nabla f\cdot \nabla f|= |Df|^2,\quad \mea\text{-a.e.,}\quad \forall\, f \in \W(\Omega).
\end{split}
\end{equation}
 
Under the infinitesimally Hilbertian assumption we can define a notion of Laplacian via integration by parts.
\begin{definition}[{Neumann Laplacian}] \label{def:neum lapl} 
    Let $\Xdm$ be an infinitesimally Hilbertian metric measure space and $\Omega\subset \X$ be open. We say that $f\in \W(\Omega)$ belongs to the domain of the Neumann Laplacian, and we write $f\in \D(\Delta_\cN,\Omega)$,  if there exists  $h\in L^2(\Omega)$ such that 
\begin{equation}\label{eq:def Nlapl}
\int_{\Omega} hg \,\d\mm=-\int_{\Omega}\nabla f \cdot \nabla g\,\d\mm\,, \quad \forall\, g\in \W(\Omega).
\end{equation}
If $f\in\dom(\Delta_\cN,\Omega)$ then the function $h$ is unique and is denoted by $\Delta_\cN f.$
\end{definition}

\begin{definition}[{Dirichlet Laplacian}]\label{def:dir lapl}
    Let $\Xdm$ be an infinitesimally Hilbertian metric measure space and $\Omega\subset \X$ be open. Then $f\in \W_0(\Omega)$ belongs to the domain of the Dirichlet Laplacian, and we write $f\in \D(\Delta_\cD,\Omega)$,  if there exists  $h\in L^2(\Omega)$ such that 
\begin{equation}\label{eq:def Dlapl}
\int_{\Omega} hg \,\d\mm=-\int_{\Omega}\nabla f \cdot \nabla g\,\d\mm\,, \quad \forall\, g\in \W_0(\Omega).
\end{equation}
If $f\in\dom(\Delta_\cD,\Omega)$ then the function $h$ is unique and is denoted by $\Delta_\cD f.$
\end{definition}

Since under the infinitesimally Hilbertian assumption Lipschitz and bounded functions  are dense in $\W(\X)$ (see \cite{AGS13}) we have $\W_0(\X)=\W(\X)$ and so the Dirichlet and Neumann Laplacian coincide for $\Omega=\X $. In this situation we simply write $\Delta=\Delta_\cN=\Delta_\cD$ and call it simply Laplacian operator and write $f \in \dom(\Delta)$ in place of $f \in \dom(\Delta_\cN,\X)$ or $f \in \dom(\Delta_\cD,\X)$.

\begin{definition}[Eigenfunctions]\label{def:eigen}
    Let $\Xdm$ be an infinitesimally Hilbertian metric measure space and $\Omega\subset \X$ be open.
We say that a non-null $f\in \D(\Delta_\cD,\Omega)$ (resp.\  $\D(\Delta_\cN,\Omega)$) is a Dirichlet (resp.\  Neumann) \emph{eigenfunction of the Laplacian} in $\Omega$ of \emph{
eigenvalue} $\lambda \in\R$ if $\Delta_\cD f=-\lambda f$ (resp.\  $\Delta_\cN f=-\lambda f$). In the case $\Omega=\X$ we simply write that $f$ is an eigenfunction of the Laplacian of eigenvalue $\lambda.$
\end{definition}

\begin{remark}[Compatibility with Euclidean Laplacian]\label{rmk:eucl lapl}
    If $\Xdm=(\rr^N,|\cdot|,\Leb^N)$ and $\Omega\subset \rr^N$ is open, the spaces $\W(\Omega)$ and $\W_0(\Omega)$  coincide with the usual ones, also with the same norms, as shown in \cite[Theorem 4.5]{Shanmugalingam00} (see also \cite[Theorem A.2 and Corollary A.4]{BB13} or \cite[Section 2.1.5]{GP20}). In particular, by polarization,  the right-hand side of both \eqref{eq:def Nlapl} and \eqref{eq:def Dlapl} coincides with the integral of the scalar product between weak gradients in the classical sense. This shows that the definition of  eigenfunction (and eigenvalue) of the Neumann or Dirichlet Laplacian  in $\Omega$ given above coincides with the usual one in the Euclidean case.\fr
\end{remark}

For later use we observe that, whenever Lipschitz functions are dense in $\W(\X)$, for every bounded and open set $\Omega \subset \X$ it holds that
\begin{equation}\label{eq:support inside}
   \{ f \in \W(\X) \ : \  \sfd(\supp(f),\X\setminus\Omega)>0\} \subset \W_0(\Omega).
\end{equation}
Indeed there exist $\eta \in \LIP(\X)$  such that $\eta=1$ in $\supp (f)$ and $\supp(\eta)\subset \Omega$  and a sequence $f_n \in \LIP(\X)$ with $f_n \to f$ in $\W(\X)$,  by density. Then $\eta f_n\in \LIP_{bs}(\Omega)$ and $\eta f_n \to f$ in $\W(\X)$,  which shows that $f \in \W_0(\Omega).$

We state in the next lemma an inequality between Neumann and Dirichlet eigenvalues that will play a key role in the sequel. Note that in the statement by $\W_0(\Omega)\hookrightarrow L^2(\Omega)$ we mean, more precisely, that $T(\W_0(\Omega))\hookrightarrow L^2(\Omega)$ where $T$ is defined in \eqref{eq:def isoSob}. 
\begin{lemma}\label{lem:discrete spectrum}
Let $\Xdm$ be an infinitesimally Hilbertian metric measure space and let $\Omega\subset \X$ be open. Let us suppose that $\W_0(\Omega)\hookrightarrow L^2(\Omega)$ with compact inclusion. Then $-\Delta_\cD$ has discrete spectrum, i.e. the eigenvalues form a diverging sequence (counted with multiplicity) that we denote by
\begin{equation}\label{eq: Dirlisteigen}
0\leq\lambda_1^\cD(\Omega)\le \lambda_2^\cD(\Omega)\le \dots \lambda_k^\cD(\Omega)\le \dots \rightarrow +\infty\,.
\end{equation}
If moreover $\W(\Omega)\hookrightarrow L^2(\Omega)$ with compact inclusion, then also
$-\Delta_\cN$ has discrete spectrum denoted by
\begin{equation}\label{eq: Neumlisteigen}
0=\lambda_1^\cN(\Omega)\le \lambda_2^\cN(\Omega)\le \dots \lambda_k^\cN(\Omega)\le \dots \rightarrow +\infty\,,
\end{equation}
 and it holds
\begin{equation}\label{eq: Neum<Dir}
\lambda_k^\cN(\Omega)\le \lambda_k^\cD(\Omega)\,, \quad \forall k\in\N.
\end{equation}
\end{lemma}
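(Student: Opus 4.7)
The plan is to realize both $-\Delta_\cD$ and $-\Delta_\cN$ as the non-negative self-adjoint operators on $L^2(\Omega)$ associated, via Kato's form representation theorem, with the closed symmetric bilinear forms $\mathcal{E}_\cD(f,g) := \int_\Omega \nabla f \cdot \nabla g \, \d\mm$ on the domain $T(\W_0(\Omega))$ and $\mathcal{E}_\cN(f,g) := \int_\Omega \nabla f \cdot \nabla g \, \d\mm$ on the domain $\W(\Omega)$, where $T$ is the isometry of \eqref{eq:def isoSob}. Symmetry, bilinearity and non-negativity of these forms are immediate from \eqref{eq:def scalar}--\eqref{eq:C-S}; density of the domain in $L^2(\Omega)$ follows from the density of $\LIP_{bs}(\Omega)$; and closedness amounts exactly to saying that the form domains, equipped with the graph norm $(\|\cdot\|_{L^2(\Omega)}^2 + \mathcal{E}(\cdot,\cdot))^{1/2}$, are complete, which by infinitesimal Hilbertianity is just the statement that $\W_0(\Omega)$ and $\W(\Omega)$ are Hilbert spaces. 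A direct comparison with Definitions \ref{def:neum lapl} and \ref{def:dir lapl} shows that the Kato operators associated with $\mathcal{E}_\cN$ and $\mathcal{E}_\cD$ coincide with $-\Delta_\cN$ and $-\Delta_\cD$ respectively.

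Next, I would invoke the standard abstract spectral theorem: a non-negative self-adjoint operator whose form domain embeds compactly into its ambient $L^2$-space has compact resolvent, and hence purely discrete spectrum consisting of a non-decreasing sequence of non-negative eigenvalues diverging to $+\infty$. This immediately yields \eqref{eq: Dirlisteigen} from the compactness of $\W_0(\Omega) \hookrightarrow L^2(\Omega)$ and \eqref{eq: Neumlisteigen} from the compactness of $\W(\Omega) \hookrightarrow L^2(\Omega)$. The equality $\lambda_1^\cN(\Omega) = 0$ is witnessed by the constant function, which belongs to $\W(\Omega)$ in the regime in which the Neumann compact-embedding hypothesis is meaningful (it forces $\mm(\Omega) < \infty$).

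For the ordering \eqref{eq: Neum<Dir}, I would use the Courant--Fischer min-max characterization
\[
\lambda_k^\cD(\Omega) = \min_{\substack{V \subset T(\W_0(\Omega)) \\ \dim V = k}} \, \max_{f \in V \setminus \{0\}} \frac{\int_\Omega |Df|^2 \, \d\mm}{\|f\|_{L^2(\Omega)}^2},
\]
together with the analogous formula for $\lambda_k^\cN(\Omega)$ with $\W(\Omega)$ in place of $T(\W_0(\Omega))$. The inequality reduces to verifying that $T(\W_0(\Omega)) \subset \W(\Omega)$ with $|D(Tf)| = |Df|$ $\mm$-a.e.\ on $\Omega$ for every $f \in \W_0(\Omega)$, so that any admissible $k$-dimensional Dirichlet subspace is also an admissible $k$-dimensional Neumann subspace carrying the same Rayleigh quotient. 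This compatibility is obtained by applying the Leibniz rule and locality from \eqref{eq:loc and leib} to $\eta_n f$ for $\eta_n \in \LIP_{bs}(\Omega)$ with $\{\eta_n = 1\} \uparrow \Omega$, combined with the local definition \eqref{eq:wug local} of the minimal w.u.g.\ on $\W(\Omega)$. I expect this compatibility step to be the only non-routine point of the proof; once it is in place, everything else is standard form-theoretic spectral theory, and \eqref{eq: Neum<Dir} drops out of the min-max.
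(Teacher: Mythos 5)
Your proposal is correct and follows essentially the same route as the paper: both realize $-\Delta_\cD$ and $-\Delta_\cN$ as the self-adjoint operators generated by the closed quadratic forms on $T(\W_0(\Omega))$ and $\W(\Omega)$, deduce discreteness of the spectrum from the compact embedding of the form domain into $L^2(\Omega)$, and obtain \eqref{eq: Neum<Dir} from the variational (min-max) characterization using that $T(\W_0(\Omega))\subset \W(\Omega)$ with equal Rayleigh quotients. The only cosmetic difference is that the paper phrases the forms as Dirichlet forms and cites the relaxation/lower-semicontinuity characterization of closedness, while you argue via completeness of the form domains and Kato's representation theorem, which amounts to the same thing.
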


\begin{proof}
Let us introduce the local Cheeger energies 
\begin{equation*}\label{eq:DirForms}
\begin{aligned}
        &\rmCh_{\cD}^{\Omega}:L^2(\Omega)\rightarrow [0,+\infty], \ \qquad \rmCh_{\cD}^{\Omega}(f):=\begin{cases}\int_{\Omega}|Df|^2\d\mm \qquad &\textrm{if} \ f=g\restr{\Omega} \, \textrm{for some} \ g\in W_0^{1,2}(\Omega)\,,\\
        +\infty &\textrm{otherwise},
        \end{cases}
        \\
        &\rmCh_{\cN}^{\Omega}:L^2(\Omega)\rightarrow [0,+\infty], \qquad \rmCh_{\cN}^{\Omega}(f):=\begin{cases}\int_{\Omega}|Df|^2\d\mm \qquad &\textrm{if} \ f\in \W(\Omega)\,,\\
        +\infty &\textrm{otherwise},
        \end{cases}
\end{aligned}
\end{equation*}
and notice that they define two Dirichlet forms, i.e. two densely defined, Markovian, closed, quadratic forms \cite{BH91,FOT11}. To check this, it is sufficient to recall the calculus rules given in \eqref{eq:loc and leib} and, for the $L^2$-lower semicontinuity, the equivalent definition through relaxation (see \cite{CaRo22} for all the details). We denote by $L_{\cD}$ (resp.\  $L_{\cN}$) the infinitesimal generator of $\rmCh_{\cD}^{\Omega}$ (resp.\  $\rmCh_{\cN}^{\Omega}$) with its associated domain $\D(L_{\cD})$ (resp.\  $\D(L_{\cN})$). Notice that, by the very definition, $\D(\Delta_\cN,\Omega)=\D(L_{\cN})$ with $\Delta_\cN=L_{\cN}$. Regarding the Dirichlet Laplacian, we have $f\in \D(\Delta_\cD,\Omega)$ if and only if $f\restr{\Omega}\in\D(L_{\cD})$ with $\Delta_\cD f=L_{\cD}(f\restr{\Omega})$. In particular, $\lambda$ is an eigenvalue of $-\Delta_\cD$ (resp.\  $-\Delta_\cN$) if and only if it is and eigenvalue of $-L_{\cD}$ (resp.\  $-L_{\cN}$). 

From the classical theory of Dirichlet forms \cite{BH91,FOT11} we know that $-L_\cD$ and $-L_\cN$ are non-negative, densely defined, linear, self-adjoint operators on $L^2(\Omega)$. Under these assumptions, it is well known (see e.g. \cite{Dav95}) that the compactness of the embedding of $\W_0(\Omega)$ (resp.\  $\W(\Omega)$) in $L^2(\Omega)$ implies the discreteness of the spectrum of $-L_{\cD}$ (resp.\  $-L_{\cN}$) and thus of $-\Delta_\cD$ (resp.\  $-\Delta_\cN$).

Since $T(\W_0(\Omega))\subset \W(\Omega)$ as Hilbert spaces, we also know that whenever $\W(\Omega)\hookrightarrow L^2(\Omega)$ with compact inclusion both the spectra are discrete. 

We also have at disposal the variational characterization of the eigenvalues , see e.g. \cite[Theorems 4.5.1, 4.5.3]{Dav95}. More precisely, defined 
\begin{equation}\label{eq: Rayleigh}
\begin{aligned}
&\lambda^{\cN}(\Omega)[M]:=\sup\{\rmCh_{\cN}^{\Omega}(f)\ : \ f\in M, \|f\|_{L^2(\Omega)}=1\}, \\ 
&\lambda^{\cD}(\Omega)[M]:=\sup\{\rmCh_{\cD}^{\Omega}(f)\ : \ f\in M, \|f\|_{L^2(\Omega)}=1\},
\end{aligned}
\end{equation}
we know that for every $k\in \nn$
\begin{equation}\label{eq: inf-sup-eigen}
\begin{aligned}
&\lambda_k^\cN(\Omega)=\inf\{\lambda^{\cN}(\Omega)[M]\ : \ M\subset \W(\Omega), \ \dim(M)=k\}, \\
&\lambda_k^\cD(\Omega)=\inf\{\lambda^{\cD}(\Omega)[M]\ : \ M\subset T(\W_0(\Omega)), \ \dim(M)=k\},
\end{aligned}
\end{equation}

The inequality \eqref{eq: Neum<Dir} thus follows immediately from \eqref{eq: inf-sup-eigen} since the infimum is taken on a larger set and $\rmCh_{\cN}^{\Omega}(f)= \rmCh_{\cD}^{\Omega}(f)$ for every $f\in T(\W_0(\Omega)).$
\end{proof} 

We will use in the sequel the notation introduced in the previous lemma, i.e.\ whenever $-\Delta_\cD$ (resp.\  $-\Delta_\cN$) has discrete spectrum in $\Omega$ we will denote by $\{\lambda_k^\cD(\Omega)\}_{k\in \nn}$ (resp.\  $\{\lambda_k^\cN(\Omega)\}_{k\in \nn}$) the sequence of its eigenvalues. In the case $\Omega=\X$, assuming the discreteness of the spectrum of $-\Delta$, we will simply write $\lambda_k$ in place of $\lambda_k^{\cN}(\X)$.

For an arbitrary m.m.s.\ $\Xdm$ and any $\Omega \subset \X$ open subset we also introduce 
\begin{equation}\label{def: lambda1}
\lambda_1(\Omega)\coloneqq \inf\left\{\frac{\int |D u|^2\d \mm}{\int u^2\d \mm} : u \in \LIP_{bs}(\Omega), \ u\not\equiv 0 \right\}
\end{equation}
and we call $\lambda_1(\Omega)$ the \emph{first eigenvalue of the Laplacian on $\Omega$ with zero Dirichlet boundary conditions}.  
Recalling the definition  definition of $\W_0(\Omega)$ we have the following characterization of $\lambda_1(\Omega)$:
\begin{equation}\label{char: lambda1}
\lambda_1(\Omega)=\inf\left\{\frac{\int |D u|^2\d \mm}{\int u^2\d \mm} : u \in \W_0(\Omega), \ u\not\equiv 0 \right\}.
\end{equation}
Note that differently from $\lambda_1^\cD(\Omega)$, which we defined only when the inclusion $\W_0(\Omega)\hookrightarrow L^2(\Omega)$ is compact, $\lambda_1(\Omega)$ is always defined. Nevertheless, even if not needed, we stress that  whenever $\lambda^{\cD}_1(\Omega)$ exists we do have
\[
    \lambda_1(\Omega)=\lambda^{\cD}_1(\Omega),
\]
as follows by \eqref{char: lambda1} and \eqref{eq: inf-sup-eigen}.

\subsection{Sets of finite perimeter}

Let $f\in L^1_{\loc}(\X,\mm)$ and let $U\subset \X$ be open. Following \cite{MR03,AD14} we define 
\begin{equation}\label{def: per}
|{\bf D}f|(U):=\inf\left\{\liminf_{n\to \infty} \int_U \lip(f_n)\,\d\mm \ : \ f_n\in \LIP_{\loc}(U), \ f_n\to f \ \textrm{in} \ L^1_{\loc}(U,\mm)\right\}\,,
\end{equation}
and we say that $f$ is of locally bounded variation if $|{\bf D}f|(U)<+\infty$ for every $U$ open and bounded.  We also set 
$$ |{\bf D}f|(A)\coloneqq \inf \{|{\bf D}f|(U) \ : \ U\subset \X \text{ open, } A\subset U\}, \quad \forall\, \text{$A\subset \X$ Borel} $$
(note that this coincides with \eqref{def: per} if $A$ is open).
For every Borel set $E\subset \X$ and $A\subset \X$ Borel we define $\Per(E,A):=|{\bf D}\chi_E|(A)<+\infty$, where $\chi_E:X\to \{0,1\}$ denotes the characteristic function of $E$. We say that $E$  is of  finite perimeter if $\Per(E)\coloneqq \Per(E,\X)<+\infty$. 

When $f$ is of locally bounded variation (respectively, $E$ is a set of  finite perimeter), the map $A\mapsto|{\bf D}f|(A)$ (respectively, $A\mapsto\Per(E,A)$) defines a Borel measure (see \cite{AD14,MR03}).   Every $f\in \LIP(\X)$ is of locally bounded variation and $|{\bf D}f|\le \lip (f) \mm$ (see \cite[Remark 5.1]{AD14}).

From the definitions it immediately follows  that $E$ is of finite perimeter if and only if $E^c$ is of finite perimeter, in which case $\Per(E,\cdot)=\Per(E^c,\cdot)$ holds. 

In the sequel we will take advantage of the following coarea-type inequality.
\begin{proposition}\label{prop: coarea}
Let $\Xdm$ be a metric measure space and fix $x \in \X.$ Then for a.e.\ $r>0$ the ball $B_r(x)$ has finite perimeter and for every Borel set $A\subset \X$ it holds
\begin{equation}\label{eq:coarea}
    \int_0^R \Per(B_r(x),A) \, \d r\le \mea(B_R(x)\cap A), \quad \forall\, R>0.
\end{equation}
\end{proposition}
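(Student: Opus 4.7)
The plan is to apply the coarea formula of Miranda \cite{MR03} to a Lipschitz truncation of the distance function $\sfd(x,\cdot)$. First, I would fix $x \in \X$ and $R > 0$ and, for each $\varepsilon \in (0, R)$, introduce the $1$-Lipschitz truncation
$$g_\varepsilon(y) := \bigl((R-\varepsilon) - \sfd(x, y)\bigr)^+.$$
Since $g_\varepsilon$ vanishes identically on the open set $\X \setminus \overline{B_{R-\varepsilon}(x)}$, its slope satisfies $\lip(g_\varepsilon) \le \chi_{\overline{B_{R-\varepsilon}(x)}}$ pointwise; and since $\mm$ is finite on bounded sets, $g_\varepsilon \in L^1(\mm)$.

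Next I would invoke the inequality $|{\bf D}g_\varepsilon| \le \lip(g_\varepsilon)\, \mm$, valid for every Lipschitz function (recalled in the excerpt from \cite[Remark 5.1]{AD14}). Evaluating on any Borel set $A \subset \X$ this gives
$$|{\bf D}g_\varepsilon|(A) \le \mm\bigl(\overline{B_{R-\varepsilon}(x)} \cap A\bigr) \le \mm(B_R(x) \cap A),$$
and in particular $g_\varepsilon$ is of bounded variation on $\X$.

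The core step is then Miranda's coarea formula \cite{MR03} applied to $g_\varepsilon$:
$$|{\bf D}g_\varepsilon|(A) = \int_{-\infty}^{+\infty} \Per\bigl(\{g_\varepsilon > t\}, A\bigr)\, \d t.$$
A direct computation identifies $\{g_\varepsilon > t\} = B_{R-\varepsilon-t}(x)$ for $t \in [0, R-\varepsilon)$, while the superlevel set is either $\emptyset$ or $\X$ for other values of $t$; in both exceptional cases the perimeter vanishes, as the characteristic function is constant. After the change of variables $r = R - \varepsilon - t$ I obtain
$$\int_0^{R-\varepsilon} \Per(B_r(x), A)\, \d r \le \mm(B_R(x) \cap A).$$
Letting $\varepsilon \downarrow 0$ and applying monotone convergence on the left-hand side yields the stated inequality. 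Taking $A = \X$ and any $R>0$ one has $\int_0^R \Per(B_r(x))\, \d r \le \mm(B_R(x)) < \infty$, so $\Per(B_r(x)) < \infty$ for a.e.\ $r > 0$.

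The only ingredient doing real work is Miranda's coarea formula; the rest is a routine truncation whose purpose is to keep $\supp(g_\varepsilon)$ strictly inside $B_R(x)$, thereby bypassing any subtlety at the boundary sphere $\partial B_R(x)$ whose $\mm$-measure need not vanish. For this reason I do not anticipate any genuine obstacle beyond quoting the coarea formula in the correct form for the BV framework adopted in the excerpt.
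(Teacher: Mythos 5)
Your proof is correct and takes essentially the same route as the paper: both rest on Miranda's coarea formula from \cite{MR03} together with the bound $|{\bf D}f|\le \lip(f)\,\mm$, applied to the distance function from $x$. The only difference is cosmetic: you truncate $\sfd(x,\cdot)$ at level $R-\varepsilon$ and let $\varepsilon\downarrow 0$, whereas the paper applies the coarea formula directly to the (locally BV) function $\sfd(x,\cdot)$ and then localizes by taking $E=B_R(x)\cap A$ and observing that $\Per(B_r(x),B_R(x))=0$ for $r>R$.
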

\begin{proof}
    Since the function $\sfd_x(\cdot)\coloneqq \sfd(x,\cdot)$ is 1-Lipschitz, it is of locally bounded variation and $|{\bf D}\sfd_x|\le \lip (\sfd_x )\mm\le \mm. $ Then by  the coarea formula (see Proposition 4.2  in \cite{MR03}) we get directly that $B_r(x)=\{\sfd_x(\cdot)<r\}$ has finite perimeter for a.e.\ $r>0$ and that 
    \[
    \int_0^{+\infty} \Per(B_r(x),E)\d r=|{{\bf D} \sfd_x}|(E)\le \mea(E), \quad \forall\,\, E\subset\X \text{ Borel}. 
    \]
    Then \eqref{eq:coarea} follows taking $E\coloneqq B_R(x)\cap A$ and observing that by the very definition in \eqref{def: per} it holds $\Per(B_r(x),B_R(x))=0$ for every $r>R$.
\end{proof}

Let $\Xdm$ be a metric measure space. Given a Borel set $E\subset \X$ we define the upper and lower densities at $x$ as
\[
\overline D(E,x)\coloneqq \limsup_{r\to 0^+}\frac{\mea(B_r(x)\cap E)}{\mea(B_r(x))}, \quad \underline D(E,x)\coloneqq \liminf_{r\to 0^+}\frac{\mea(B_r(x)\cap E)}{\mea(B_r(x))}.
\]
Clearly, if  $x\in \X$ is such that $\overline D(E,x)>0$, by definition of limit superior we have $x\in \overline{E}$ (since every open ball with center $x$ must intersect $E$).

The \emph{essential boundary} and the \emph{essential interior} are given respectively by
\begin{align*}
	&\essb E\coloneqq \{x \in \X \ : \ \overline D(E,x)>0,\, \overline D(E^c,x)>0 \},\\
	&E^{(1)}\coloneqq\{x \in \X \ : \ \overline D(E,x)=\underline D(E,x)=1\},
\end{align*}
which are both Borel sets. As a direct consequence of the definition of these sets, notice that if $E\subset F$, then $E^{(1)}\subset F^{(1)}$. Moreover, $\essb E=\essb (E^c)$.

We collect in the next lemma all the elementary facts that we will need about the essential boundary and the essential interior. 
\begin{lemma}\label{lem: essb and essi basic}
Let  $(\X,\sfd)$ be a metric space and $E,F\subset \X$ be Borel sets. We have the following:
\begin{enumerate}[label=(\roman*)]
\item If $E$ is open,  $E\subset E^{(1)}$.
\item $\essb E\subset \partial E$.
\item $(E\cap F)^{(1)}\subset E^{(1)}\cap F^{(1)}$.
\item $(E^{c})^{(1)}\subset (E^{(1)})^c$. In particular, if $E$ is  open then $(E^{c})^{(1)}\subset E^c$.
\item If $E$ and $F$ are disjoint, then also $E^{(1)}$ and $F^{(1)}$ are disjoint.
\item $\essb (E\cap F)\cup\essb (E \cup F)\subset \essb E \cup \essb F$.
\end{enumerate}
\end{lemma}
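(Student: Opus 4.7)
All six items are consequences of two elementary facts: monotonicity of the measure, $\mm(B_r(x)\cap A)\le\mm(B_r(x)\cap B)$ whenever $A\subset B$, and finite subadditivity, $\mm(B_r(x)\cap (A\cup B))\le \mm(B_r(x)\cap A)+\mm(B_r(x)\cap B)$. I would simply chase each inclusion through the definitions of $\overline D$, $\underline D$, $E^{(1)}$, and $\essb E$; there is no real obstacle, only bookkeeping.

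For (i), if $E$ is open and $x\in E$, pick $r_0>0$ with $B_{r_0}(x)\subset E$; then $\mm(B_r(x)\cap E)=\mm(B_r(x))$ for every $r<r_0$, so $\overline D(E,x)=\underline D(E,x)=1$. For (ii), $\overline D(E,x)>0$ forces every ball $B_r(x)$ to meet $E$ (otherwise the ratio would be $0$ eventually), hence $x\in \overline E$; applying the same reasoning to $E^c$ gives $x\in\overline{E^c}$, so $x\in\partial E$. For (iii), since $E\cap F\subset E$ and $E\cap F\subset F$, the monotonicity property quoted in the text (if $A\subset B$ then $A^{(1)}\subset B^{(1)}$) gives $(E\cap F)^{(1)}\subset E^{(1)}$ and $(E\cap F)^{(1)}\subset F^{(1)}$.

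For (iv), observe that $\mm(B_r(x)\cap E)+\mm(B_r(x)\cap E^c)=\mm(B_r(x))$, so dividing by $\mm(B_r(x))$ and passing to $\liminf$/$\limsup$ respectively yields $\overline D(E,x)+\underline D(E^c,x)\le 1$. If $x\in (E^c)^{(1)}$ then $\underline D(E^c,x)=1$, whence $\overline D(E,x)=0$, so $x\notin E^{(1)}$; this proves $(E^c)^{(1)}\subset (E^{(1)})^c$. When $E$ is open, combine with (i) to get $(E^{(1)})^c\subset E^c$, establishing the ``in particular'' clause. Then (v) is immediate: $E\cap F=\emptyset$ implies $F\subset E^c$, so by monotonicity of $(\cdot)^{(1)}$ and (iv), $F^{(1)}\subset (E^c)^{(1)}\subset (E^{(1)})^c$.

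For (vi), the argument is by contraposition. Suppose $x\notin \essb E\cup\essb F$; we show $x\notin \essb(E\cap F)$ and $x\notin \essb(E\cup F)$. Fix $x\notin \essb E$, so either $\overline D(E,x)=0$ or $\overline D(E^c,x)=0$, and similarly for $F$. For the union, note $(E\cup F)^c=E^c\cap F^c$ so $\overline D((E\cup F)^c,x)\le \min(\overline D(E^c,x),\overline D(F^c,x))$, while by subadditivity $\overline D(E\cup F,x)\le \overline D(E,x)+\overline D(F,x)$. If $x\in\essb(E\cup F)$ then both these quantities are positive, forcing $\overline D(E^c,x)>0$, $\overline D(F^c,x)>0$, and at least one of $\overline D(E,x),\overline D(F,x)$ positive; but then either $x\in\essb E$ or $x\in\essb F$, a contradiction. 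For $E\cap F$, one uses the dual relations $\overline D(E\cap F,x)\le \min(\overline D(E,x),\overline D(F,x))$ and $\overline D((E\cap F)^c,x)\le \overline D(E^c,x)+\overline D(F^c,x)$ (since $(E\cap F)^c=E^c\cup F^c$) and argues symmetrically. This completes the sketch.
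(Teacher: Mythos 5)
Your proof is correct, and items (i)--(v) follow essentially the paper's own route: (i), (ii), (iii) and (v) are verbatim the same arguments, and for (iv) your use of the exact identity $\mm(B_r(x)\cap E)+\mm(B_r(x)\cap E^c)=\mm(B_r(x))$, giving $\overline D(E,x)+\underline D(E^c,x)\le 1$, is just a cleaner version of the paper's threshold argument (the paper picks $r$ small with $\mm(B_r(x)\cap E^c)>\tfrac23\mm(B_r(x))$ and concludes $\mm(B_r(x)\cap E)<\tfrac13\mm(B_r(x))$). The genuine difference is item (vi): the paper does not prove it at all, but cites an external reference (Prop.~1.16 of \cite{BPRT}, with the remark that the doubling assumption there is not needed), whereas you give a direct elementary proof using only monotonicity and finite subadditivity of the upper density, via $(E\cup F)^c=E^c\cap F^c$ and $(E\cap F)^c=E^c\cup F^c$. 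Your argument is sound (positivity of $\overline D(E\cup F,x)$ and $\overline D((E\cup F)^c,x)$ forces, through $\overline D(E\cup F,x)\le\overline D(E,x)+\overline D(F,x)$ and $\overline D((E\cup F)^c,x)\le\min(\overline D(E^c,x),\overline D(F^c,x))$, membership in $\essb E\cup\essb F$, and symmetrically for the intersection), and what it buys is a self-contained lemma that visibly requires no structural assumption on $\mm$ beyond the standing hypothesis $\supp(\mm)=\X$ making the density ratios well defined; the paper's choice buys only brevity.
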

\begin{proof}
\begin{enumerate}[label=\textit{(\roman*)}]
\item Let $x\in E$. Since $E$ is open, $B_r(x)\subset E$ for sufficiently small $r>0$, thus $\underline D(E,x)=1$ and $x\in E^{(1)}$.
\item As we have already observed, if $x\in \essb E$ it holds $x\in \overline{E}$ and $x\in \overline{E^c}$, thus $x\in \partial E$. 
\item The result is a direct consequence of the fact that $\underline D(E\cap F,x)\le \underline D(E,x)\le 1$ for every $x\in \X$.
\item Let $x\in (E^{c})^{(1)}$, i.e. $\underline D(E^c,x)=1$. In particular, 
$$\frac{2}{3}\mm(B_r(x))<\mm(B_r(x)\cap E^c)=\mm(B_r(x))-\mm(B_r(x)\cap E)$$
for sufficiently small $r>0$. Thus $\mm(B_r(x)\cap E)<\frac{1}{3}\mm(B_r(x))$ for $r>0$ small enough, which implies $x\notin E^{(1)}$. The second conclusion follows from what we have just proven and point $({i})$. 
\item The assumption $E\cap F=\emptyset$ is equivalent to $E\subset F^{c}$. Passing to the essential interior it holds $E^{(1)}\subset (F^{c})^{(1)}$ and using $({iv})$ one deduces that $E^{(1)}\subset (F^{(1)})^c$ which gives the desired conclusion.
\item This is proven e.g.\ in \cite[Prop. 1.16]{BPRT} (note that the doubling assumption on $\mm$ is not used in that  statement).
\end{enumerate}
\end{proof}

We conclude this part with the following elementary and well known result. Since we could not find it stated exactly in this form in the literature, we include a proof.
\begin{lemma}\label{lem:perimeter inside}
    Let $\Xdm$ be a metric measure space and let $C\subset \X$ be closed. Then for every $E\subset C$ Borel satisfying $\sfd(E,\X\setminus C)>0$ it holds
\[
\Per(E)=\Per_{C}(E),
\]
where $\Per_{C}(E)$ denotes the perimeter of $E$ computed in the metric measure space $(C,\sfd\restr{C},\mea\restr C).$
\end{lemma}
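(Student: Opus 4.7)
The plan is to prove the two inequalities $\Per_C(E) \le \Per(E)$ and $\Per(E) \le \Per_C(E)$ separately, by directly comparing admissible approximating sequences in \eqref{def: per}. Writing $\lip_C$ for the slope computed in $(C,\sfd\restr{C})$ and setting $d_0 := \sfd(E, \X\setminus C) > 0$, the inequality $\Per_C(E)\le \Per(E)$ is immediate: for every $f\in \LIP_{\loc}(\X)$ the restriction $f\restr{C}$ lies in $\LIP_{\loc}(C,\sfd\restr{C})$, and since in the defining limsup only points of $C$ approach $x\in C$, one has $\lip_C(f\restr{C})\le \lip(f)$ on $C$; restricting any admissible sequence $f_n\in \LIP_{\loc}(\X)$ with $f_n\to \chi_E$ in $L^1_{\loc}(\X,\mm)$ then produces an admissible sequence for $\Per_C(E)$ with $\int_C \lip_C(f_n\restr{C})\,\d\mm\le \int_\X \lip(f_n)\,\d\mm$, and passing to the infimum yields the bound.

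For the reverse inequality, the plan is to extend a near-optimal sequence from $C$ to $\X$ via a Lipschitz cutoff supported strictly inside $C$. Fix $0<a<b<d_0$ and set $\eta(x):=\min\bigl(1,\max(0,(b-\sfd(x,E))/(b-a))\bigr)$; this is a $\frac{1}{b-a}$-Lipschitz function on $\X$, identically $1$ on $\{\sfd(\cdot,E)\le a\}\supset E$ and vanishing outside $K:=\{\sfd(\cdot,E)\le b\}$. Since $b<d_0$, the set $K$ sits inside $C$ with positive distance to $\X\setminus C$. Given $f_n\in\LIP_{\loc}(C,\sfd\restr{C})$ converging to $\chi_E$ in $L^1_{\loc}(C,\mm\restr{C})$ with $\int_C \lip_C(f_n)\,\d\mm \to \Per_C(E)$, I would first truncate to $[0,1]$ (a composition which does not increase the slope) and then set $\tilde f_n := \eta f_n$ on $C$, extended by $0$ on $\X\setminus C$. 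The buffer between $K$ and $\X\setminus C$ ensures $\tilde f_n\in\LIP_{\loc}(\X)$: small $\X$-balls around points of $K$ are contained in $C$, while on the open set $\{\sfd(\cdot,E)>b\}\supset \X\setminus C$ the function vanishes. Since $\eta\chi_E=\chi_E$, the inequality $|\tilde f_n-\chi_E|\le |f_n-\chi_E|$ on $C$ (and $\tilde f_n=\chi_E=0$ off $C$) gives $\tilde f_n\to\chi_E$ in $L^1_{\loc}(\X,\mm)$.

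The slope estimate is the heart of the argument. On $\{\sfd(\cdot,E)<a\}$ the cutoff is constant $1$ so $\lip(\tilde f_n)=\lip(f_n)=\lip_C(f_n)$ (the last equality because small $\X$-balls around these points lie in $C$); on $\X\setminus K$ we have $\tilde f_n\equiv 0$; on the intermediate annular region $K\setminus\{\sfd(\cdot,E)<a\}$ the Leibniz rule for slopes yields
\[
\lip(\tilde f_n)\le \eta\,\lip_C(f_n)+\tfrac{|f_n|}{b-a}\le \lip_C(f_n)+\tfrac{|f_n|}{b-a}.
\]
Crucially, $\chi_E\equiv 0$ on this annular region, so integrating gives
\[
\int_\X\lip(\tilde f_n)\,\d\mm\le \int_C\lip_C(f_n)\,\d\mm + \tfrac{1}{b-a}\int_{K\setminus\{\sfd(\cdot,E)<a\}}|f_n-\chi_E|\,\d\mm,
\]
and the error term vanishes as $n\to\infty$ by $L^1_{\loc}$-convergence on the bounded (hence $\mm$-finite) annular region. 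Taking $\liminf_n$ yields $\Per(E)\le \Per_C(E)$. The main obstacle is precisely this extension step: the hypothesis $d_0>0$ is what allows the cutoff to be supported strictly inside $C$ (so $\tilde f_n$ is locally Lipschitz across $\partial C$) and what forces the spurious $\lip(\eta)$ contribution into a region where $\chi_E$ already vanishes, which is why the cutoff error becomes negligible in the limit.
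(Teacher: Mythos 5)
Your argument is correct and is essentially the paper's own proof: the easy inequality comes from restricting locally Lipschitz functions (the slope can only decrease on $C$), and the reverse one from multiplying a near-optimal sequence on $C$ by a cutoff equal to $1$ near $E$ and supported at positive distance from $\X\setminus C$, extending by zero, so that the cutoff's slope contributes only where $\chi_E$ vanishes and the error term disappears in the limit. The differences are cosmetic — you build the cutoff explicitly from $\sfd(\cdot,E)$ and quantify the error as $\tfrac{1}{b-a}\int |f_n-\chi_E|$ over the annulus $\{a\le \sfd(\cdot,E)\le b\}$ — and your appeal to the boundedness of that annulus (true when $E$ is bounded, as in all the paper's applications) is the same implicit integrability step the paper makes when it lets its error term $\int_{C\setminus E} g_n\,\d\mm$ vanish.
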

\begin{proof}
    First observe that for every $f \in \LIP_\loc(\X)$ it holds $f\restr {C} \in \LIP_\loc(C)$ and $\lip_{C} (f) \le \lip (f)$, where $\lip_{C} (f)$ denotes the slope of $f$ computed in the metric space $(C,\sfd\restr{C})$. This and the definitions implies $\Per_{C}(E)\le \Per(E).$ For the other inequality it is sufficient to find a sequence $f_n \in \LIP_{\loc}(C,\sfd\restr{C})$ such that  $\sfd(\supp(f_n),\X \setminus C)>0$, $f_n \to \nchi_{E}$ in $L^1(\mm)$ and  $\int_C \lip_C(f_n)\d \mm\to \Per_{C}(E)$. Indeed extending $f_n$ by zero to the whole $\X$ we  have $\int_\X \lip(f_n)\d \mm=\int_C \lip_C(f_n)\d \mm$, so that 
    \begin{equation*}
    \Per(E)\leq \lim_{n\to +\infty}\int_{X}\lip(f_n)\d \mm=  \lim_{n\to +\infty}\int_C \lip_C(f_n)\d \mm= \Per_{C}(E).
    \end{equation*}
    To produce such sequence we consider a sequence $g_n \in \LIP_\loc(C,\sfd\restr{C})$ such that $g_n\to \nchi_E$ in $L^1(C,\mm\restr{C})$ and $\int_{C} \lip_{C}(g_n)\d \mm\to \Per_{C}(E)$, which exists by definition.  Then we take any $\eta \in \LIP_{\loc}(C)$ satisfying  $\sfd(\supp(\eta),\X \setminus C)>0$, $\eta=1$ in a neighbourhood of $E$, $0\le \eta \le 1$ and set $f_n\coloneqq \eta g_n \in \LIP_{\loc}(C,\sfd\restr{C}).$ Clearly $f_n\to \nchi_E$ in $L^1(C;\mm\restr{C})$.  Moreover by the Leibniz rule for the slope $\lip_{C}(f_n)\le g_n\lip_{C} (\eta )+\lip_{C}(g_n)$. In addition $\lip_{C}(\eta)=0$ in $E$. Therefore
    \begin{align*}
    &\Per_{C}(E)\le  \liminf_n \int_{C}\lip_{C}(f_n)\d \mm\le \limsup_n \int_{C}\lip_{C}(f_n)\d \mm\\
    &\le \limsup_n  \int_{C}\lip_{C}(g_n)\d \mm+ \lip_{C} (\eta) \int_{\X\setminus E} g_n \d \mm=\Per_{C}(E),
    \end{align*}
    where the second term vanishes because $g_n\to \nchi_E$ in $L^1(C;\mm\restr{C})$.
\end{proof}

\subsection{PI spaces }\label{sec:pi}
Most of the arguments along the note will be carried out in the general setting of locally doubling m.m.\ spaces supporting a Poincar\'e inequality, also called PI spaces. We refer to \cite{BB13,HKST} and references therein for a thorough introduction on this topic and recall here only the properties of these spaces that will be used in this note.
\begin{definition}[PI space]\label{def:PI}
A metric measure space $\Xdm$ is said to be a \emph{PI space} if:
\begin{enumerate}[label=\roman*)]
    \item it is \emph{uniformly locally doubling}, i.e.\ if there exists a function $C_D\, :(0,\infty)\to (0,\infty)$ such that
\begin{equation*}
\mm\big(B_{2r}(x)\big)\leq C_D(R)\,\mm\big(B_r(x)\big)\,,\quad\text{for every }0<r<R\text{ and }x\in\X,
\end{equation*}
\item supports a \emph{weak local \((1,1)\)-Poincar\'{e} inequality}, i.e   there exists a constant $\lambda\ge 1$ and a function $C_P\, :(0,\infty)\to (0,\infty)$ such that for any $f\in \LIP_{\loc}(X)$ it holds
\begin{equation*}
\fint_{B_r(x)}\bigg|f-\fint_{B_r(x)}f\,\d\mm\bigg|\,\d\mm\leq C_P({R})\,r\fint_{B_{\lambda r}(x)}\lip(f)\,\d\mm\,,
\quad\text{for every }0<r<R\text{ and }x\in\X.
\end{equation*}
\end{enumerate}
\end{definition}
Observe that the uniformly locally doubling assumption implies that  PI spaces are proper.
Additionally PI spaces are connected and locally connected  (see e.g.\ \cite[Theorem 4.32]{BB13} or \cite[Prop. 4.8]{BB18}).

We will need the following approximation result, which is a variation of \cite[Lemma 3.6]{MS20}.
\begin{lemma}[Approximation with non-vanishing slope]\label{lem:nonvanishing gradients}
    Let $\Xdm$ be a bounded PI space. Then for every open subset $\Omega\subset \X$ and any non-negative $u \in \LIP_{c}(\Omega)$ there exists a sequence of non-negative $u_n \in \LIP_{c}(\Omega)$ satisfying $\lip(u_n)\neq0 $ $\mm$-a.e.\  in $\{u_n>0\}$ and such that $u_n \to u$ in $\W(\X).$
\end{lemma}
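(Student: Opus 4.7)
The plan is to combine a truncation from below with a small spatial perturbation. First, for $\eps > 0$ define $v_\eps := (u - \eps)_+$: these are non-negative Lipschitz functions with $\supp(v_\eps) \subset \{u \geq \eps\}$, compactly contained in $\Omega$. By the chain rule and locality of the minimal weak upper gradient, $|D v_\eps| = |Du|\chi_{\{u > \eps\}}$ $\mm$-a.e., and since $|Du| = 0$ $\mm$-a.e.\ on $\{u = 0\}$ by locality, dominated convergence yields $v_\eps \to u$ in $\W(\X)$ as $\eps \to 0^+$. Hence it suffices to approximate each $v_\eps$ in $\W(\X)$ by non-negative $\LIP_c(\Omega)$ functions with non-vanishing slope on their positivity set.

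For the perturbation, fix $\eps > 0$, set $v := v_\eps$ and pick a cutoff $\eta \in \LIP_c(\Omega)$ with $0 \le \eta \le 1$ and $\eta \equiv 1$ on an open neighborhood of $\supp v$. Next, choose a Lipschitz function $\rho$ on $\X$ whose slope is strictly positive $\mm$-a.e.\ on $\supp(\eta)$; a convenient choice in a bounded PI space is $\rho(x) := \sum_i c_i \sfd(x, x_i)$ for a countable dense sequence $\{x_i\} \subset \X$ and summable weights $c_i > 0$. Consider the one-parameter family $w_\delta := v + \delta\, \eta \rho$, each lying in $\LIP_c(\Omega)$ and non-negative for every $\delta > 0$.

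The core claim is that for $\Leb^1$-a.e.\ $\delta > 0$ the function $w_\delta$ has non-vanishing slope $\mm$-a.e.\ on $\{w_\delta > 0\}$. To see this, introduce the bad set $E := \{(x, \delta) \in \X \times (0, 1) : w_\delta(x) > 0,\, \lip(w_\delta)(x) = 0\}$ and analyze it fiberwise in $x$. At each $x$ where $\lip(\eta \rho)(x) > 0$, Cheeger's differentiation theorem for PI spaces realizes $\lip(\cdot)(x)$ as the norm of a fiberwise linear differential $d_x$, so that $\lip(v + \delta \eta\rho)(x) = \|d_x v + \delta\, d_x(\eta\rho)\|$ vanishes for at most one value of $\delta \in \R$. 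Thus the $x$-slice $E^x$ has $\Leb^1$-measure zero, and Fubini gives $(\mm \otimes \Leb^1)(E) = 0$; hence a.e.\ $\delta$ is good. A diagonal extraction coupling admissible $\delta_n \to 0$ with truncation parameters $\eps_n \to 0$ (with $\delta_n$ chosen small enough that $\|\delta_n \eta\rho\|_{\W(\X)} \to 0$) yields the desired sequence $u_n$. The main obstacle is the fiberwise linearity step used in the Fubini argument, which is precisely where the PI structure enters through Cheeger's theorem; without it the slope $\lip$ need not be the norm of a linear object, and cancellations could persist on sets of positive measure.
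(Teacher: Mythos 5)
Your route is genuinely different from the paper's: the paper proves this lemma in a few lines by passing to a geodesic metric $\tilde\sfd$ bi-Lipschitz equivalent to $\sfd$ (every bounded PI space admits one) and invoking \cite[Lemma 3.6]{MS20} in that metric, tracking how slopes and the $\W(\X)$-norm change under the bi-Lipschitz equivalence. Your truncation step $v_\eps=(u-\eps)^+$ and the idea of selecting a good parameter $\delta$ via Cheeger charts and Fubini are fine in themselves, but as written the construction has a genuine gap.

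The gap is in the transition region of the cutoff. Your fiberwise argument only excludes bad values of $\delta$ at points $x$ where $d_x(\eta\rho)\neq 0$. On $\{0<\eta<1\}$ one has $v=0$ (since $\eta\equiv 1$ near $\supp v$), hence $w_\delta=\delta\,\eta\rho$ there and $w_\delta>0$ wherever $\eta\rho>0$; if $d_x(\eta\rho)=0$ on a positive-measure subset of $\{0<\eta<1\}\cap\{\rho>0\}$, then the slice $E^x$ equals all of $(0,1)$ for every $x$ in that set, Fubini gives nothing, and the conclusion fails for \emph{every} $\delta$. This can really happen: already in $\R^N$ take $\rho(x)=|x|$ and a radial cutoff $\eta=\psi(|x|)$ with $\psi(t)=c/t$ on an interval, so that $\eta\rho$ is constant (zero slope) on an annulus of positive measure inside $\{\eta\rho>0\}$. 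In other words, your argument implicitly requires the auxiliary function $\eta\rho$ to already have non-vanishing slope a.e.\ on its positivity set, which is an instance of the very statement being proved. A related unproved point is the claim that $\rho=\sum_i c_i\,\sfd(\cdot,x_i)$ has strictly positive slope $\mm$-a.e.: cancellations are possible (on a circle, two equal weights at antipodal points already give a constant sum), and no argument is offered for a general PI space; the robust way to produce a function with slope bounded below is the same ingredient the paper uses, namely quasiconvexity, taking $\rho=\tilde\sfd(\cdot,x_0)$ so that $\lip(\rho)\ge L^{-1}$ off $x_0$. (There is also a smaller point to address, namely that $\lip(w_\delta)(x)=\|d_xw_\delta\|_x$ must hold at a.e.\ $x$ simultaneously for all $\delta$, since the exceptional set in Cheeger's theorem a priori depends on the function; this is fixable because the family is finite-dimensional in the differentials.) But even granting these repairs, the transition-region issue remains, so the proof does not close as written; one needs either a perturbation adapted to the cutoff — essentially what the distance-function construction of \cite{MS20} achieves in the geodesic metric — or a separate argument for $\eta\rho$ on $\{0<\eta<1\}$.
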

\begin{proof}
Since $\Xdm$ is a bounded PI space it admits a geodesic distance $\tilde \sfd$ that is bi-Lipschitz equivalent to $\sfd,$ i.e.\ $L^{-1}\tilde \sfd \le \sfd\le L \tilde \sfd $ for some constant $L\ge 1$ (see e.g.\  \cite[Corollary 8.3.16]{HKST}).
Hence we can apply \cite[Lemma 3.6]{MS20}  to deduce that the conclusion of the lemma holds in the m.m.s.\ $(\X,\tilde \sfd,\mm)$. However $u \in \LIP_c(\Omega;\sfd)$ if and only if $\LIP_c(\Omega;\tilde \sfd)$, with  $\lip(u)\ge L^{-1}\tilde \lip(u)$,  where $\tilde \lip(\cdot)$ denotes the slope computed in the metric space $(\X,\tilde \sfd)$. Hence if $\tilde \lip(u)\neq 0$ $\mea$-a.e.\ in $\{u>0\}$, then $\lip(u)\neq 0$ in $\{u>0\}$ $\mea$-a.e.. Moreover, since $\LIP_\loc(\X,\sfd)=\LIP_\loc(\X,\tilde \sfd)$ and  $\lip(\cdot)\le L\tilde \lip(\cdot)$ we have that $\|u\|_{\W\Xdm}\le L\|u\|_{\W(\X,\tilde \sfd,\mm)}$ for all $u \in \LIP_{bs}(\X,\sfd)$. Therefore the conclusion holds also for the m.m.s.\ $\Xdm.$ 
\end{proof}

The above lemma allows to give the following characterization of $\lambda_1(\Omega)$.

\begin{lemma}[Characterization of $\lambda_1(\Omega)$ via functions with non-vanishing slope]
  	Let $\Xdm$ be a bounded PI space and $\Omega \subset \X$ be open. Then
   \begin{equation}\label{eq:non-van lambda1}
\lambda_1(\Omega)=\inf\left\{\frac{\int |D u|^2\d \mm}{\int u^2\d \mm} : u \in \LIP_{c}(\Omega), \ u\not\equiv 0, \ u\ge 0,\ \lip(u)\neq0 \text{ $\mm$-a.e.\  in $\{u>0\}$} \right\}\,  .
\end{equation}
  
\end{lemma}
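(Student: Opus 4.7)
Denote by $\tilde\lambda$ the infimum appearing on the right-hand side of \eqref{eq:non-van lambda1}. The plan is to prove the two inequalities $\tilde\lambda \ge \lambda_1(\Omega)$ and $\tilde\lambda \le \lambda_1(\Omega)$ separately. The first is essentially tautological, while the second is obtained by approximation and will be the only step requiring actual work.

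For the easy direction $\tilde\lambda \ge \lambda_1(\Omega)$, I would observe that every admissible competitor in the definition of $\tilde\lambda$ is in $\LIP_c(\Omega)$, hence (after extending by zero, noting that the space is proper and therefore compactly supported Lipschitz functions on $\Omega$ extend to functions in $\LIP_{bs}(\Omega)$) defines an admissible competitor for $\lambda_1(\Omega)$ with the same Rayleigh quotient. The inequality then follows directly from the characterization \eqref{char: lambda1}.

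For the non-trivial direction $\tilde\lambda \le \lambda_1(\Omega)$, I would start from an arbitrary nonzero $u \in \LIP_{bs}(\Omega)$ and show that its Rayleigh quotient can be approached by competitors in the class defining $\tilde\lambda$. The first reduction is to assume $u\ge 0$: applying the chain rule in \eqref{eq:loc and leib} with $\phi(t)=|t|$ gives $|u|\in \LIP_{bs}(\Omega)$ with $\||D|u|\|\|_{L^2}=\||Du|\|_{L^2}$ and $\|\,|u|\,\|_{L^2}=\|u\|_{L^2}$, so the Rayleigh quotient is unchanged and we may replace $u$ by $|u|\ge 0$. Then I would invoke Lemma~\ref{lem:nonvanishing gradients} to produce a sequence of non-negative $u_n \in \LIP_c(\Omega)$ with $\lip(u_n)\neq 0$ $\mm$-a.e.\ on $\{u_n>0\}$ and $u_n\to u$ in $\W(\X)$. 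Each $u_n$ is admissible for the infimum defining $\tilde\lambda$ (once $n$ is large enough that $u_n\not\equiv 0$, which holds by $L^2$-convergence and $u\not\equiv 0$).

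To conclude, I would use that convergence in $\W(\X)$ implies both $\int u_n^2\,\d\mm \to \int u^2\,\d\mm>0$ and $\int |Du_n|^2\,\d\mm \to \int |Du|^2\,\d\mm$. Hence
\[
\tilde\lambda \le \frac{\int |Du_n|^2\,\d\mm}{\int u_n^2\,\d\mm} \xrightarrow[n\to\infty]{} \frac{\int |Du|^2\,\d\mm}{\int u^2\,\d\mm}\,,
\]
so $\tilde\lambda \le \int|Du|^2\,\d\mm/\int u^2\,\d\mm$ for every nonzero $u\in \LIP_{bs}(\Omega)$. Taking the infimum over $u$ yields $\tilde\lambda \le \lambda_1(\Omega)$. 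The only non-routine ingredient is the approximation lemma, which has already been established as Lemma~\ref{lem:nonvanishing gradients}; the rest is a direct passage to the limit in the Rayleigh quotient, so no genuine obstacle is expected.
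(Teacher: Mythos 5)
Your proposal is correct and follows essentially the same route as the paper: reduce to non-negative competitors by replacing $u$ with $|u|$ (chain rule preserving the Rayleigh quotient) and then apply Lemma \ref{lem:nonvanishing gradients}, passing to the limit in the Rayleigh quotient via $\W(\X)$-convergence. The paper's proof is just a more compressed version of the same argument (it leaves the easy inequality and the limit passage implicit), so there is nothing substantive to add.
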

\begin{proof}
For every for every $u \in \LIP_{bs}(\Omega)$, setting $\tilde u\coloneqq |u|$ we have $\tilde u \in \LIP_{bs}(\Omega)$ and by the chain rule (see \eqref{eq:loc and leib}) also that $|D\tilde u|=|Du|$ $\mea$-a.e.. This shows that \eqref{eq:non-van lambda1} holds if we remove the requirement that $\lip(u)\neq0$ $\mm$-a.e.\  in $\{u>0\}$. From this, to get the validity of the full \eqref{eq:non-van lambda1} it is sufficient to apply Lemma \ref{lem:nonvanishing gradients}.
\end{proof}

We recall the following deep result proved in \cite{Cheeger00}, relating the notions of minimal weak upper gradient and slope, in the setting of PI spaces . 
\begin{theorem}\label{thm:lip pi}
    Let $\Xdm$ be a PI space. Then 
    \begin{equation}\label{eq:lip pi}
        \lip(f)=|Df|,\quad \mea\text{-a.e., for every $f\in \LIP_{bs}(\X)$.}
    \end{equation}
\end{theorem}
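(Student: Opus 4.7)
The inequality $|Df|\le \lip(f)$ $\mm$-a.e.\ is the easy half and is already recorded in the preliminaries; it is immediate from the relaxation definition of the Cheeger energy applied to the constant sequence $f_n\equiv f$ together with a localization argument. Hence the real content is the pointwise reverse inequality $\lip(f)(x)\le |Df|(x)$ for $\mm$-a.e.\ $x\in \X$ and every $f\in \LIP_{bs}(\X)$.

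My plan would be to follow the classical argument of Cheeger. A first observation is that, in any metric measure space, for a Lipschitz function $f$ the slope $\lip(f)$ is an upper gradient along any locally rectifiable curve (via a straightforward mean-value argument), so $|Df|$ itself is a $2$-weak upper gradient of $f$. Next, the weak local $(1,1)$-Poincaré inequality applied to $f$ with weak upper gradient $|Df|$, combined with a telescoping/chaining estimate over a geometric sequence of balls joining $x$ to $y$, gives for $\mm$-a.e.\ pair of nearby points the pointwise bound
\[
|f(x)-f(y)| \le C\, \sfd(x,y)\bigl(M_R |Df|(x) + M_R |Df|(y)\bigr),
\]
where $M_R$ denotes the restricted Hardy--Littlewood maximal function at scale $R$ and $C$ depends only on the local doubling and Poincaré constants. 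Evaluating this at a common Lebesgue point $x$ of $|Df|$ and $M_R |Df|$, dividing by $\sfd(x,y)$ and letting $y\to x$, one obtains a preliminary control $\lip(f)(x)\le C\, |Df|(x)$.

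The main obstacle is that, as produced by the telescoping estimate above, the constant $C$ is in general strictly larger than $1$, whereas the statement demands $C=1$. Sharpening the constant is the actual deep content of Cheeger's theorem and is achieved via a self-improvement scheme: one plugs into the inequality normalized comparison functions of the form $\bigl(f-\ell_x\bigr)/\sfd(\cdot,x)$, with $\ell_x$ a suitable ``generalized affine'' function at the base point, iterates the estimate, and uses the strong $L^2$-convergence of slopes along an energy-minimizing sequence of Lipschitz approximations together with a Mazur-type refinement to upgrade weak to strong convergence. In the present work it is sufficient to invoke the statement of \cite{Cheeger00}, whose proof proceeds along the lines just outlined and whose decisive step is precisely this final refinement of the constant.
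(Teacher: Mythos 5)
Your proposal is correct and matches the paper's treatment: the paper does not prove this statement but simply invokes it as the deep result of \cite{Cheeger00}, exactly as you do in your final step. Your preliminary observations (the easy inequality $|Df|\le \lip(f)$ and the telescoping/maximal-function argument giving the estimate with a non-sharp constant) are accurate context, but the decisive sharp identity is, as you say, taken from \cite{Cheeger00} in both cases.
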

The following is a consequence of the Rellich-Kondrachov compactness theorem in PI spaces.

\begin{theorem}[\hspace{1sp}{\cite[Theorem 8.3]{HK00}}]\label{thm:compact embedding}
    Let $\Xdm$ be a PI space and $\Omega\subset \X$ be open and bounded. Then the embedding $\W_0(\Omega)\hookrightarrow L^2(\Omega)$ is compact.
\end{theorem}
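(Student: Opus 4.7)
The plan is to run the classical Rellich--Kondrachov argument via mollification-by-averages, adapted to the PI setting. Let $\{u_n\}\subset \W_0(\Omega)$ be a bounded sequence; I want to extract a subsequence that is Cauchy in $L^2(\Omega)$. First I would fix a bounded open set $\tilde\Omega\supset\overline\Omega$ (for instance the $1$-enlargement $\Omega^1$), observing that since $\Omega$ is bounded and $\mea$ is finite on bounded sets, $\mea(\tilde\Omega)<\infty$, and that since PI spaces are proper, $\overline{\tilde\Omega}$ is compact. By definition of $\W_0(\Omega)$, each $u_n$ vanishes $\mm$-a.e.\ outside $\Omega$ and can be viewed as an element of $\W(\X)$ supported in $\overline\Omega$, with uniformly bounded $\W(\X)$-norm.

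The key tool is the ball-average mollifier $u_n^r(x):=\fint_{B_r(x)} u_n\,\d\mea$. From the $(1,1)$-Poincar\'e inequality, a standard self-improvement (Keith--Zhong, or the Hajlasz--Koskela telescoping) yields a weak $(1,2)$-Poincar\'e inequality, from which one derives an estimate of the form
\[
\int_{\tilde\Omega} |u_n-u_n^r|^2\,\d\mea \leq C\,r^2\int_{\tilde\Omega^{\lambda r}} |Du_n|^2\,\d\mea \leq C'\,r^2,
\]
uniformly in $n$ for $r$ smaller than a fixed scale. Simultaneously, for each fixed $r>0$ the maps $u_n^r$ are uniformly bounded on $\overline{\tilde\Omega}$ (by Cauchy--Schwarz and doubling) and equicontinuous: doubling and a coarea-type comparison of $\mea(B_r(x)\triangle B_r(y))/\mea(B_r(x))$ give a modulus of continuity of $u_n^r$ depending only on $r$ and $\|u_n\|_{L^2}$.

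By the Arzel\`a--Ascoli theorem on the compact set $\overline{\tilde\Omega}$, for each fixed $r$ the family $\{u_n^r\}_n$ is precompact in $C(\overline{\tilde\Omega})$, hence in $L^2(\tilde\Omega)$. A diagonal extraction along $r=1/k$, $k\in\N$, produces a subsequence (still labelled $\{u_n\}$) along which $u_n^{1/k}$ converges in $L^2(\tilde\Omega)$ for every $k$. The Cauchy property of $\{u_n\}$ in $L^2(\Omega)$ then follows from the triangle inequality
\[
\|u_n-u_m\|_{L^2(\Omega)} \leq \|u_n-u_n^r\|_{L^2(\tilde\Omega)}+\|u_n^r-u_m^r\|_{L^2(\tilde\Omega)}+\|u_m^r-u_m\|_{L^2(\tilde\Omega)},
\]
by first choosing $r=1/k$ small enough to make the outer terms $\leq \varepsilon$ via the Poincar\'e bound, and then $n,m$ large so that the middle term is $\leq\varepsilon$.

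The main obstacle I expect is the rigorous verification of equicontinuity and of the Poincar\'e-mollification estimate purely from the axioms of a PI space, since one cannot rely on any smooth structure and must instead combine the doubling property with the weak Poincar\'e inequality on an enlargement $\tilde\Omega^{\lambda r}$ that may exceed $\Omega$. Since functions in $\W_0(\Omega)$ are zero outside $\Omega$, this enlargement is actually harmless: the contribution of $|Du_n|^2$ on $\tilde\Omega^{\lambda r}\setminus\Omega$ vanishes, and the support constraint ensures the averages are well defined and controlled up to the boundary of $\tilde\Omega$. This is exactly the content of \cite[Theorem~8.3]{HK00}, which we therefore invoke.
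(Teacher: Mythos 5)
The paper offers no proof of this statement at all: it is imported verbatim as \cite[Theorem 8.3]{HK00}, and since your argument, after outlining the standard route (uniform Poincar\'e--mollification estimate, Arzel\`a--Ascoli for the ball averages at fixed scale, diagonal extraction), ultimately closes by invoking the very same reference, you are taking essentially the same approach as the paper. Your sketch is a reasonable outline of how that cited result is proved (modulo minor technical points such as replacing pointwise ball averages by discrete convolutions to guarantee the equicontinuity claim), so there is nothing to object to.
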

From Theorem \ref{thm:compact embedding} and the discussion in Section \ref{sec:calculus} we deduce that 
\begin{equation}\label{eq:PI discrete sepctrum}
    \parbox{12cm}{The Dirichlet Laplacian has discrete spectrum on any bounded open subset of an infinitesimally Hilbertian PI space.}
\end{equation}

The following result is well known. In particular the proof can be achieved by a standard Moser iteration scheme (see e.g.\ \cite[Theorem 8.24]{GilTru}), which is available in a PI space.  Indeed, as firstly observed in  \cite{Saloff} (see also \cite{HK00}), a Poincar\'e inequality and a doubling condition together imply a Sobolev inequality, which is then sufficient  to perform the Moser scheme  (see, for example, \cite[Chapter 8]{BB13} or \cite{BjMa}).

\begin{theorem}[Continuity of eigenfunctions]\label{thm:continuous eigen}
    Let $\Xdm$ be an infinitesimally Hilbertian PI space,  $\Omega\subset \X$ be open  and $u$ be a Dirichlet or Neumann eigenfunction of the  Laplacian in $\Omega$. Then $u$ is locally H\"older continuous in $\Omega.$ 
\end{theorem}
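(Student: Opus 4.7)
The plan is to reduce the statement to the classical Moser iteration plus De Giorgi--Nash--Moser Hölder continuity scheme, both of which are available in the PI setting. A first remark is that the question is purely local and that the Dirichlet and Neumann cases can be treated simultaneously: for any ball $B$ with $\overline{B}\subset \Omega$ and any Lipschitz cut-off $\eta$ with $\sfd(\supp(\eta),\X\setminus\Omega)>0$, the functions $g=\eta^2\phi(u)$ with $\phi\in\Lip(\R)$, $\phi(0)=0$, belong to $\W_0(\Omega)$ by \eqref{eq:support inside} together with the density of Lipschitz functions in $\W(\X)$ guaranteed by the infinitesimally Hilbertian hypothesis. Such $g$ is therefore admissible in both Definition \ref{def:dir lapl} and Definition \ref{def:neum lapl}, so the identity $\int\nabla u\cdot\nabla g\,\d\mm=\lambda\int ug\,\d\mm$ is valid in both cases with a common class of test functions.

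The second ingredient is the Sobolev--Poincar\'e inequality, which in a PI space follows from the combination of doubling and the $(1,1)$-Poincar\'e inequality by the argument of Saloff-Coste (see also \cite[Chapter 5]{BB13}). Inserting test functions of the form $\eta^2 u_\pm^{2q-1}$, $q\ge 1$, in the eigenfunction equation and applying the Leibniz and chain rules \eqref{eq:loc and leib} yields a Caccioppoli-type estimate of the shape
\[
\int \eta^2 |Du_\pm^q|^2\,\d\mm\le C(q)\int\bigl(|D\eta|^2+|\lambda|\,\eta^2\bigr)u_\pm^{2q}\,\d\mm.
\]
Iterating this inequality on a sequence of shrinking concentric balls, and using the Sobolev--Poincar\'e inequality to upgrade $L^2$-norms of $u^q$ into higher integrability, one obtains in the standard way the Moser bound
\[
\|u\|_{L^\infty(B_R(x))}\le C(R,\lambda)\Bigl(\fint_{B_{2R}(x)}u^2\,\d\mm\Bigr)^{1/2},\qquad \overline{B_{2R}(x)}\subset\Omega,
\]
which is precisely the scheme carried out in a PI space in \cite{BjMa}. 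In particular $u\in L^\infty_{\loc}(\Omega)$.

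Once local boundedness is established, the equation $\Delta u=-\lambda u$ has a bounded right-hand side and I would upgrade from $L^\infty_{\loc}$ to $C^{0,\alpha}_{\loc}$ by the De Giorgi--Nash--Moser theory for PI spaces. Concretely, one applies the weak Harnack inequality for nonnegative super/subsolutions of $\Delta v=f$ with $f\in L^\infty$ (see \cite{BjMa} and \cite[Chapter 8]{BB13}) to the functions $M(R)-u+\varepsilon$ and $u-m(R)+\varepsilon$ on a ball $B_R\subset\Omega$, where $M(R)=\sup_{B_R}u$ and $m(R)=\inf_{B_R}u$, and extracts the oscillation-decay inequality
\[
\text{osc}_{B_{R/2}(x)}\,u\le \theta\,\text{osc}_{B_R(x)}\,u+CR,\qquad\theta\in(0,1).
\]
A dyadic iteration of this estimate then produces polynomial decay $\text{osc}_{B_r(x)}\,u\le Cr^\alpha$ for some $\alpha\in(0,1)$, which is exactly the local Hölder continuity claimed.

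The main obstacle is not conceptual but of a bookkeeping nature: one has to verify carefully that the cut-off test functions $\eta^2\phi(u)$ lie in $\W_0(\Omega)$ (so that they are admissible in both the Dirichlet and the Neumann formulations), that the calculus rules \eqref{eq:loc and leib} can be applied in the form required by the Caccioppoli estimate, and that the zero-order term $\lambda u$ can be absorbed at each step of the iteration. No machinery beyond the Sobolev inequality of Saloff-Coste and the Moser/Harnack scheme already developed for PI spaces is needed.
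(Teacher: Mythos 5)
Your proposal follows essentially the same route as the paper, which proves this result by simply invoking the standard Moser iteration/De Giorgi--Nash--Moser scheme in PI spaces, made possible by the Sobolev inequality of Saloff-Coste obtained from doubling plus the $(1,1)$-Poincar\'e inequality (citing \cite{Saloff,HK00,BB13,BjMa}). Your additional care about the admissibility of the compactly supported cut-off test functions in both the Dirichlet and Neumann formulations is exactly the right bookkeeping and is consistent with the paper's framework.
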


We now pass to the properties of sets of finite perimeter in the  setting of PI spaces .
As the measure $\mm$ is locally doubling, the Lebesgue's differentiation theorem holds (see e.g. \cite[Section 3.4]{HKST}) hence  we have $\mm(E\triangle E^{(1)})=0$ for every Borel set $E$, where $E\triangle E^{(1)}:=(E\setminus E^{(1)})\cup(E^{(1)}\setminus E)$ denotes the symmetric difference between $E$ and $E^{(1)}$. Moreover  by \cite[Theorem 5.3]{Amb02} we have that for every set of finite perimeter $E\subset \X$ the measure $P(E,\cdot)$ is concentrated on $\partial^e E.$ In particular we get
\begin{equation}\label{eq:ball perimeters}
    \Per(B^c,.)=\Per(B,.)=\Per(B,.)\restr{B^c},
\end{equation}
for every ball $B=B_r(x)\subset \X$ having finite perimeter, having used   $\partial^e B_r(x)\subset \partial B_r(x)\subset (B_r(x))^c$ (see $(ii)$ in Lemma \ref{lem: essb and essi basic}).

It is well known that every PI space admits an isoperimetric inequality (see \cite{Amb02,HK00,MR03}). We report in the following statement a simplified version sufficient to our purposes.
\begin{proposition}[Isoperimetric inequality for small volumes]\label{prop: class isop}
Let $\Xdm$ be a bounded PI space satisfying for some constant $s>1$ and $c>0$
\begin{equation}
    \frac{\mea(B_r(x))}{\mea(B_R(x))}\ge c \left(\frac{r}{R}\right)^s, \quad \forall\, x \in \X,\quad \forall\, 0<r<R.
\end{equation}
Then there exist constants $w_0=w_0(\X)>0$ and $C_I=C_I(\X,s)>0$ such that 
\begin{equation}
\Per(E) \ge C_I \mm(E)^{\frac{s-1}s}, \quad \forall\,   E \subset \X \text{ Borel  such that $\mea(E)\le w_0$}.
\end{equation}
\end{proposition}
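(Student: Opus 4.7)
The plan is to derive the isoperimetric inequality from an endpoint Sobolev--Poincar\'e inequality on balls and then globalize it via a Vitali covering argument.

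First, I would combine the $(1,1)$-Poincar\'e inequality and doubling (both built into the PI assumption) with the lower volume growth hypothesis---which, taking $R\ge \diam(\X)$ so that $B_R(x)\supset \X$, in particular yields $\mm(B_r(x))\ge c_0 r^s$ for all $0<r\le\diam(\X)$---to obtain the endpoint Sobolev--Poincar\'e inequality
\[
\left(\fint_{B_r(x)}\bigl|f-f_{B_r(x)}\bigr|^{s/(s-1)}\,\d\mm\right)^{(s-1)/s}\le C_1\, r\fint_{B_{\lambda r}(x)}\lip(f)\,\d\mm
\]
for every $f\in\LIP_\loc(\X)$, $x\in\X$ and $0<r\le\diam(\X)$, with $C_1=C_1(\X,s)$. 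This is a classical ingredient in PI spaces with lower Ahlfors regularity and can be cited from \cite{HK00} or \cite{HKST}; alternatively it can be produced directly from the $(1,1)$-Poincar\'e inequality by a Maz'ya-type truncation argument.

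Next, I approximate $\chi_E$ in $L^1(B_{\lambda r}(x),\mm)$ by Lipschitz functions $f_n$ with $\int_{B_{\lambda r}(x)}\lip(f_n)\,\d\mm\to\Per(E,B_{\lambda r}(x))$ (as allowed by the definition of $\Per$) and invoke the elementary estimate
\[
\int_{B_r(x)}\bigl|\chi_E-(\chi_E)_{B_r(x)}\bigr|^{s/(s-1)}\,\d\mm\ge 2^{-s/(s-1)}\min\bigl(\mm(E\cap B_r(x)),\mm(B_r(x)\setminus E)\bigr).
\]
Substituting into the Sobolev--Poincar\'e inequality, passing to the limit $n\to\infty$ (via truncation and dominated convergence), and absorbing the factor $r\,\mm(B_r(x))^{-1/s}$ via the lower volume growth, I obtain the relative isoperimetric inequality
\[
\min\bigl(\mm(E\cap B_r(x)),\mm(B_r(x)\setminus E)\bigr)^{(s-1)/s}\le C_2\,\Per(E,B_{\lambda r}(x))
\]
for every $x\in\X$ and $0<r\le\diam(\X)$, with $C_2=C_2(\X,s)$.

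Finally, set $w_0:=\mm(\X)/2$ and take any Borel $E\subset\X$ with $\mm(E)\le w_0$. For $\mm$-a.e.\ $x\in E$, Lebesgue differentiation yields $\mm(B_r(x)\cap E)/\mm(B_r(x))\to 1$ as $r\to 0$, while as soon as $B_r(x)\supset\X$ the ratio is at most $\mm(E)/\mm(\X)\le 1/2$. Thus there exists $r(x)\in(0,\diam(\X)]$ with
\[
\mm(B_{r(x)}(x)\cap E)\ge\tfrac{1}{2}\mm(B_{r(x)}(x))\quad\text{and}\quad\mm(B_{r(x)}(x))\le 2\mm(E).
\]
The Vitali $5r$-covering theorem applied to $\{B_{r(x)}(x)\}_{x}$ produces a disjoint countable subfamily $\{B_{r_i}(x_i)\}$ with $E\subset\bigcup_i B_{5r_i}(x_i)$ up to a $\mm$-null set. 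By doubling $\sum_i\mm(B_{r_i}(x_i))\ge c\,\mm(E)$, and the enlarged family $\{B_{\lambda r_i}(x_i)\}$ has bounded overlap (a standard consequence of disjointness of $\{B_{r_i}(x_i)\}$ and doubling). Summing the relative isoperimetric inequality and using the subadditivity of $t\mapsto t^{(s-1)/s}$ on non-negative reals yields
\[
\mm(E)^{(s-1)/s}\le C\Bigl(\sum_i\mm(B_{r_i}(x_i))\Bigr)^{(s-1)/s}\le C\sum_i\mm(B_{r_i}(x_i))^{(s-1)/s}\le C'\,\Per(E),
\]
giving the desired isoperimetric inequality with $C_I$ depending only on $\X$ and $s$. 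The main obstacle is the first step (the endpoint Sobolev--Poincar\'e inequality with exponent $s/(s-1)$); once this is in hand, the remaining arguments are fairly standard covering-type manipulations.
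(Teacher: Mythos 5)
Your first two steps are sound: combining the $(1,1)$-Poincar\'e inequality, doubling and the lower volume bound to get the endpoint Sobolev--Poincar\'e inequality, and then testing it on Lipschitz approximations of $\chi_E$, is a correct (if somewhat laborious) re-derivation of the relative isoperimetric inequality
$\min\bigl(\mm(E\cap B_r(x)),\mm(B_r(x)\setminus E)\bigr)^{(s-1)/s}\le C\,\Per(E,B_{\lambda r}(x))$,
which the paper instead simply cites from Ambrosio \cite{Amb02} (Theorem 4.3). The genuine gap is in your globalization step. Your stopping radius is only required to satisfy $\mm(B_{r(x)}(x)\cap E)\ge\tfrac12\mm(B_{r(x)}(x))$ (the condition $\mm(B_{r(x)}(x))\le 2\mm(E)$ is a consequence of it, not an extra constraint), and such a radius can be one at which $B_{r(x)}(x)$ is contained in $E$ up to a null set; then $\min\bigl(\mm(E\cap B_{r_i}),\mm(B_{r_i}\setminus E)\bigr)=0$ and the relative isoperimetric inequality gives no information on that ball. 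Consequently the final chain $\sum_i\mm(B_{r_i}(x_i))^{(s-1)/s}\le C'\Per(E)$ does not follow from what you established: you need the density of $E$ in $B_{r_i}(x_i)$ to be bounded away from both $0$ \emph{and} $1$, which your selection does not give. (The standard repair is to take $r(x)$ to be essentially the supremum of radii at which the density is $\ge 1/2$, and use doubling and monotonicity in $r$ to see that at a comparable radius both $\mm(B\cap E)$ and $\mm(B\setminus E)$ are at least a fixed fraction of $\mm(B)$.) A second, related problem: bounded overlap of the dilated family $\{B_{\lambda r_i}(x_i)\}$ is \emph{not} a standard consequence of disjointness of $\{B_{r_i}(x_i)\}$ and doubling when the radii range over many scales --- already in $\rr^N$ one can find disjoint balls inside a bounded set whose $\lambda$-dilates all contain a common point --- so the summation $\sum_i\Per(E,B_{\lambda r_i}(x_i))\le C\,\Per(E)$ needs a genuinely different justification. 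Also note that for $r\le\diam(\X)$ the ball $B_r(x)$ need not contain $\X$, so your claim that the density ratio is $\le 1/2$ at $r=\diam(\X)$ is not justified as stated.

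All of this machinery is in fact unnecessary, and this is where the paper's proof diverges from yours: since $\X$ is bounded, one applies the relative isoperimetric inequality \emph{once}, at a single scale comparable to $\diam(\X)$ (so that the ball is the whole space, or at least has measure comparable to $\mm(\X)$ by the volume lower bound), with $w_0=\mm(\X)/2$; then $\mm(\X\setminus E)\ge\mm(E)$, the minimum equals $\mm(E)$, and the global inequality $\Per(E)\ge C_I\,\mm(E)^{(s-1)/s}$ follows immediately, with no covering, no stopping-time selection and no overlap estimate. If you keep your steps 1--2 (or just cite \cite{Amb02} as the paper does), replacing your step 3 by this single-scale application closes the gap; you only have to make sure the relative inequality is available at radius of order $\diam(\X)$, which is harmless because the Poincar\'e and doubling constants are uniform on bounded ranges of radii.
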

\begin{proof}
By  \cite[Theorem 4.3]{Amb02} there exist constants $\sigma=\sigma(\X)\ge 1$ and $C=C(\X,s)>0$ such that
\[
\Per(E,B_{\sigma r}(x)) \ge C\frac{\mea(B_r(x))^\frac1s}{r}\min\left(\mea(B_r(x)\cap E),\mea(B_r(x)\setminus E)\right)^\frac{s-1}{s},\quad \forall x \in \X,\quad \forall r>0.
\]
Taking $r\coloneqq \diam(\X)$ and $w_0\coloneqq \mea(\X)/2$ the result follows (recall that $\mea(\X)<+\infty$ since $\X$ is bounded).
\end{proof}

We conclude this part reporting the following technical result.
\begin{prop}[\hspace{1sp}{\cite[Lemma 2.6]{APPV23}}]\label{prop:per intersection}
	Let \((\X,\sfd,\mm)\) be a PI space. Let \(E,F\subseteq\X\) be sets of  finite perimeter with \(P(E,\partial^e F)=0\). Then
\[
P(E\cap F,\cdot)\leq P(E,\cdot) \restr {F^{(1)}} +P(F,\cdot) \restr {E^{(1)}}.
\]
\end{prop}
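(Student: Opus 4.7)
The plan is to decompose the measure $P(E\cap F,\cdot)$ according to the essential partition $\X=F^{(1)}\sqcup(F^c)^{(1)}\sqcup\partial^e F$ of $\X$, which is genuinely disjoint (a direct check using $\overline D(F^c,\cdot)=1-\underline D(F,\cdot)$ shows every point lies in exactly one of the three pieces). Since $P(E\cap F,\cdot)$ is concentrated on $\partial^e(E\cap F)$ by \cite[Theorem~5.3]{Amb02} and any $x\in(F^c)^{(1)}$ has $\overline D(E\cap F,x)\le \overline D(F,x)=0$ so that $x\notin \partial^e(E\cap F)$, the mass of $P(E\cap F,\cdot)$ on $(F^c)^{(1)}$ vanishes; the symmetric statement for $(E^c)^{(1)}$ reduces the problem to estimating $P(E\cap F,\cdot)$ on $(F^{(1)}\cup\partial^e F)\cap(E^{(1)}\cup\partial^e E)$.

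The core step consists in proving the two ``diagonal bounds''
\[
P(E\cap F,\cdot)\restr{F^{(1)}}\le P(E,\cdot)\restr{F^{(1)}}
\quad\text{and}\quad
P(E\cap F,\cdot)\restr{E^{(1)}}\le P(F,\cdot)\restr{E^{(1)}},
\]
whose intuitive content is that on the measure-theoretic interior of $F$ the indicators $\chi_{E\cap F}$ and $\chi_E$ agree $\mm$-a.e. To establish them I would pick approximating sequences $u_n,v_n\in\LIP_\loc(\X)$ with $0\le u_n,v_n\le 1$, $u_n\to\chi_E$ and $v_n\to\chi_F$ in $L^1_\loc$, realising the defining infimum in \eqref{def: per}, so that $\lip(u_n)\mm$ and $\lip(v_n)\mm$ converge weakly-$*$ to $P(E,\cdot)$ and $P(F,\cdot)$ respectively. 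The Leibniz inequality $\lip(u_nv_n)\le u_n\lip(v_n)+v_n\lip(u_n)$ from Section~\ref{sec:calculus}, combined with the lower semicontinuity built into \eqref{def: per}, then bounds $P(E\cap F,A)$ on open $A\subseteq F^{(1)}$ by $\limsup_n\int_A u_n\lip(v_n)\,\d\mm+\limsup_n\int_A v_n\lip(u_n)\,\d\mm$. Since $P(F,\cdot)$ is concentrated on $\partial^e F$, which is disjoint from $F^{(1)}$, the first $\limsup$ vanishes, while $v_n\to 1$ $\mm$-a.e.\ on $F^{(1)}$ makes the second $\limsup$ at most $P(E,A)$.

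The residual contribution of $P(E\cap F,\cdot)$ on $\partial^e E\cap \partial^e F$ is exactly where the hypothesis $P(E,\partial^e F)=0$ enters: combining the submodularity estimate $P(E\cap F,\cdot)+P(E\cup F,\cdot)\le P(E,\cdot)+P(F,\cdot)$ (a standard consequence in PI spaces of the min/max Leibniz rule for slopes) with a refined version of the Leibniz argument above applied to the Borel set $\partial^e F$ itself should yield $P(E\cap F,\partial^e E\cap\partial^e F)\le P(E,\partial^e F)=0$. Summing the two diagonal bounds then produces the stated inequality.

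The main obstacle I anticipate is making the diagonal bounds rigorous: in the Euclidean case they are essentially immediate from the BV chain rule because $F^{(1)}$ is, up to negligible sets, an open interior, whereas in a general PI space $F^{(1)}$ need not be open and the density-$1$ condition is only asymptotic. Converting this into honest control on the singular measure $\lip(v_n)\mm$ should require a quantitative Lebesgue-differentiation argument — available thanks to the locally doubling property of $\mm$ — producing, for every compact $K\subseteq F^{(1)}$ and every $\varepsilon>0$, an open neighbourhood $U\supseteq K$ on which $v_n\ge 1-\varepsilon$ in $\mm$-average for all $n$ sufficiently large; pairing this with weak-$*$ compactness of the Radon measures $\lip(u_n)\mm$ is what should close the argument.
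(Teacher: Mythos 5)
The paper itself does not prove this proposition (it is imported from \cite[Lemma 2.6]{APPV23}), so your sketch must stand alone, and it has one genuine gap: the residual mass of $P(E\cap F,\cdot)$ on $\partial^e E\cap\partial^e F$. Subadditivity (or submodularity) of the perimeter gives, for every Borel set $B$, $P(E\cap F,B)\le P(E,B)+P(F,B)$; applied to $B=\partial^e E\cap\partial^e F$ the hypothesis kills only the first term, and nothing in your argument controls $P(F,\partial^e E\cap\partial^e F)$. Your proposed fix, a ``refined Leibniz argument applied to the Borel set $\partial^e F$ itself'', cannot work: weak-$*$ information on the measures $\lip(u_n)\,\mm$, $\lip(v_n)\,\mm$ does not localize on an $\mm$-negligible Borel set such as $\partial^e F$. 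The standard way to close this step is the two-sided comparison of Ambrosio \cite[Theorems 5.3--5.4]{Amb02}: in a PI space there are structural constants $0<c\le C$ with $c\,\mathcal H^h\restr{\partial^e G}\le P(G,\cdot)\le C\,\mathcal H^h\restr{\partial^e G}$ for every set of finite perimeter $G$, where $\mathcal H^h$ is the codimension-one Hausdorff-type measure. Then $P(E,\partial^e F)=0$ forces $\mathcal H^h(\partial^e E\cap\partial^e F)=0$, hence $P(F,\partial^e E\cap\partial^e F)=0$ and $P(E\cap F,\partial^e E\cap\partial^e F)=0$. Some representation/locality theorem of this kind is unavoidable, because the hypothesis concerns $E$ only and must be transferred to $F$ and to $E\cap F$ on the common essential boundary; this is not a technicality one can bypass with slope estimates.

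The rest of your skeleton is sound: the partition $\X=F^{(1)}\sqcup(F^c)^{(1)}\sqcup\partial^e F$, the vanishing of $P(E\cap F,\cdot)$ on $(E^c)^{(1)}$, $(F^c)^{(1)}$ and on $E^{(1)}\cap F^{(1)}$, and the two diagonal bounds, which are indeed true with constant $1$ (the constant matters for the way the paper uses the proposition). However, your localization device for the diagonal bounds is both shaky and unnecessary: an open neighbourhood of a compact $K\subset F^{(1)}$ will in general contain a positive-measure portion of $F^c$, where $v_n\to 0$, so you cannot expect $v_n\ge 1-\eps$ in average there. The robust route is simply: the Leibniz bound $\lip(u_nv_n)\le u_n\lip(v_n)+v_n\lip(u_n)\le\lip(u_n)+\lip(v_n)$ (after truncating $0\le u_n,v_n\le1$) gives $P(E\cap F,A)\le P(E,A)+P(F,A)$ for every open $A$, and outer regularity of these finite Borel measures upgrades this to all Borel $B$; taking $B\subset F^{(1)}$ and using that $P(F,\cdot)$ is concentrated on $\partial^e F$, which is disjoint from $F^{(1)}$, yields $P(E\cap F,\cdot)\restr{F^{(1)}}\le P(E,\cdot)\restr{F^{(1)}}$ directly, and symmetrically on $E^{(1)}$. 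With this repair and the Ambrosio representation for the boundary-intersection term, your decomposition assembles into a correct proof.
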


\subsection{P\'olya-Szeg\H{o} inequality in metric measure spaces}\label{sec:ps}
Here we report a version of the P\'olya-Szeg\H{o} rearrangement inequality for metric measure spaces, following \cite{MS20,NV21}.
The main difference with the classical version in the Euclidean space \cite{PS51} is that, even if the initial function lives in a metric space, the symmetrization will be defined in $\rr^N$. The result is a generalization of the P\'olya-Szeg\H{o} inequality  introduced in \cite{BM82} in the case of Riemannian manifolds.

\begin{definition}[Distribution function]\label{def:distributionfunction}
	Let $\Xdm$ be a metric measure space, $\Omega\subseteq \X$  be an open set with $\mm(\Omega)<+\infty$ and $u:\Omega\to[0,+\infty)$ be a non-negative Borel function. We define  $\mu:[0,+\infty)\to[0,\mm(\Omega)]$, the distribution function of $u$, as 
	\begin{equation}\label{eq:defdistr}
		\mu(t):=\mm(\{u> t\}).
	\end{equation}
\end{definition}
For $u$ and $\mu$ as above, we let $u^{\#}$ be the generalized inverse of $\mu$, defined by
\begin{equation*}
	u^{\#}(s):=
	\begin{cases*}
		{\rm ess}\sup u & \text{if $s=0$},\\
		\inf\left\lbrace t:\mu(t)<s \right\rbrace &\text{if $s>0$}. 
	\end{cases*}
\end{equation*}
It can be checked that $u^\#$ is non-increasing and left-continuous.

 Next, we define the \emph{Euclidean monotone rearrangement} into the Euclidean space $(\R^N, |\cdot|, \Leb^N)$, where $\Leb^N$ is the $N$-dimensional Lebesgue measure. From now on, we denote by $\omega_N\coloneqq \Leb^N(B_1(0))$ the Lebesgue measure of the unit ball in the Euclidean space $\R^N$.
\begin{definition}[Euclidean monotone rearrangement] \label{def:eucl rearrangement}
    Let $\Xdm$ be a metric measure space and $\Omega \subset \X$ be open with $\mm(\Omega)<+\infty$ and $N \in \N.$
    For any Borel function $u\colon \Omega \rightarrow \R^+$, we define $\Omega^*:= B_r(0)\subset \R^N$, choosing $r>0$ so that $\Leb^N(B_r(0))= \mm(\Omega)$ (i.e. $r^N=\omega_N^{-1}\mm(\Omega)$) and  the monotone rearrangement $u^*_{N} : \Omega^* \rightarrow \R^+$ by
    \[ u^*_{N}(x) := u^\#(\Leb^N(B_{|x|}(0)))=u^\#(\omega_N|x|^N ), \qquad \forall x \in \Omega^*.\]
\end{definition}
In particular $u$ and $u_N^*$ are equimeasurable, i.e.\ $\mm(\{u>t\})=\Leb^N(\{u^*_N>t\})$ for all $t>0.$
In the sequel, whenever we fix $\Omega$ and $u \colon \Omega \to [0,\infty)$,  the set $\Omega^*$ and the rearrangement $u^*_{N}$ are automatically defined as  above. 
 Observe also that, given $u \in L^2(\Omega)$, its monotone rearrangement must be defined by fixing a Borel representative of $u$. However, this choice does not affect the outcome object $u^*_{N}$, as clearly the distribution function $\mu(t)$ of $u$ is independent of the representative.

The following result is essentially contained in \cite{NV21}, see in particular \cite[Remark 3.7]{NV21},  (see also \cite{MS20} for a similar result), since the only difference is that here the rearrangement is defined in $\R^N$ instead that on an interval. Nevertheless we include a short argument outlining the main points of the proof.
\begin{theorem}[Euclidean P\'olya-Szeg\H{o} inequality]\label{thm:eu_polyaszego}
		Let $(\X,\sfd,\mm)$ be a bounded PI space,  $\Omega\subsetneq \X$ be open and fix $N \in \N\setminus\{1\}.$ Suppose there exists a constant $\tilde{C}>0$ such that
		\begin{equation} \label{eq:isop_eu}
\Per(E) \ge \tilde{C}\mm(E)^{\frac{N-1}N}\, , \quad \forall\,   E \subset \Omega \text{ Borel.}
\end{equation}
  Then:
\begin{enumerate}[label=\roman*)]
    \item For every  $u \in \LIP_c(\Omega)$ non-negative, $u\not\equiv 0$, with $\lip(u)\neq 0$ $\mm$-a.e.\ in $\{u>0\}$, then $u^*_N \in \LIP_c(\Omega^*)$ and it holds
    \begin{equation}\label{eq:improved polya}
        \int_{\{u\le s\}} |Du|^2\, \d \mm \ge\int_0^s\left(\frac{\Per (\{u>t\})}{N\omega_N^{\frac1N}\mu(t)^{\frac{N-1}{N}}} \right)^2\int_{\rr^N}  |D u_{N}^*| \d \Per(\{u^*_N>t\})  \, \d t, \quad \forall s \in(0,\max u].
    \end{equation}
		\item 
		The Euclidean-rearrangement maps $\W_0(\Omega)$ to $\W_0(\Omega^*)$ and
	\begin{equation}\label{eq:euclpolya}
		\int_{\Omega}|{D u}|^2\d \mm\ge \Big(\frac{\tilde{C}}{N\omega_N^{1/N}}\Big)^2\int_{\Omega^*}|{D u_{N}^*}|^2\d \Leb^N, \quad \forall \, u \in \W_0(\Omega).
	\end{equation}
	\end{enumerate}		
\end{theorem}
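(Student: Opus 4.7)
The plan is to adapt the classical Talenti/P\'olya--Szeg\H{o} symmetrization scheme to the metric setting: express both sides of \eqref{eq:improved polya}--\eqref{eq:euclpolya} via the coarea formula as integrals over the level sets of $u$ and $u_N^*$, and compare them level by level through Cauchy--Schwarz. The hypothesis $\lip(u)\neq 0$ $\mm$-a.e.\ on $\{u>0\}$ in (i) is precisely what allows a meaningful inversion of $|Du|$.

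The starting point is the absolutely-continuous derivative formula
\[
-\mu'(t)=\int_{\{u=t\}}\tfrac{1}{|Du|}\,dP(\{u>t\},\cdot)\qquad\text{for a.e.\ }t>0,
\]
which I would derive from the Lipschitz coarea formula $|Du|\,\mm=\int_0^{\infty}P(\{u>t\},\cdot)\,dt$ available in PI spaces (see \cite{MR03,Amb02}, using Theorem~\ref{thm:lip pi} to identify $|Du|=\lip(u)$): testing against $g=\nchi_{\{a<u<b,\,|Du|>0\}}/|Du|$, which is $\mm$-a.e.\ defined because $\{u>0,\,|Du|=0\}$ is $\mm$-negligible by hypothesis and $|Du|=0$ on $\{u=0\}$ by locality, gives $\mu(a)-\mu(b)=\int_a^b\int_{\{u=t\}}|Du|^{-1}\,dP\,dt$ for every $0<a<b$. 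Cauchy--Schwarz on each level against $P(\{u>t\},\cdot)$ then yields
\[
\int_{\{u=t\}}|Du|\,dP\;\ge\;\frac{P(\{u>t\})^2}{-\mu'(t)},\qquad\text{for a.e.\ }t>0.
\]
For the rearrangement side, $u_N^*$ is Lipschitz on $\Omega^*$: from $|\mu'(t)|\ge P(\{u>t\})/L\ge (\tilde C/L)\mu(t)^{(N-1)/N}$ (with $L=\Lip(u)$, invoking the isoperimetric hypothesis) and the formula $u_N^*(x)=u^\#(\omega_N|x|^N)$ one gets $|(u_N^*)'(r)|\le NL\omega_N^{1/N}/\tilde C$; compact support in $\Omega^*$ follows from $\mm(\{u>0\})<\mm(\Omega)$, which holds because $u\in\LIP_c(\Omega)$ while $\Omega\setminus\supp(u)$ is non-empty and open in the connected PI space $\X$ (the assumptions $\Omega\subsetneq\X$ and $\supp(\mm)=\X$ ensure it has positive measure). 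Moreover the strict monotonicity of $\mu$ implies that the level sets $\{u_N^*=t\}$ are spheres $\partial B_{\rho(t)}$ with $\omega_N\rho(t)^N=\mu(t)$, on which $|Du_N^*|$ takes a constant value $\nu(t)>0$. Running the same Cauchy--Schwarz for $u_N^*$ in place of $u$ becomes an \emph{equality}: $-\mu'(t)=P(\{u_N^*>t\})/\nu(t)$, and since $P(\{u_N^*>t\})=N\omega_N^{1/N}\mu(t)^{(N-1)/N}$ by equimeasurability and the Euclidean isoperimetric equality, we obtain $\int_{\rr^N}|Du_N^*|\,dP(\{u_N^*>t\},\cdot)=P(\{u_N^*>t\})^2/(-\mu'(t))$. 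Combining the inequality with this equality gives the pointwise a.e.\ estimate
\[
\int_{\{u=t\}}|Du|\,dP\;\ge\;\Big(\tfrac{P(\{u>t\})}{N\omega_N^{1/N}\mu(t)^{(N-1)/N}}\Big)^2\int_{\rr^N}|Du_N^*|\,dP(\{u_N^*>t\},\cdot),
\]
and integrating from $0$ to $s$ via coarea (using $|Du|=0$ on $\{u=0\}$ by locality) produces \eqref{eq:improved polya}, proving (i).

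For (ii), the isoperimetric hypothesis \eqref{eq:isop_eu} bounds the ratio factor from below by $(\tilde C/N\omega_N^{1/N})^2$, and the Euclidean coarea identifies $\int_0^{\max u_N^*}\int|Du_N^*|\,dP(\{u_N^*>t\},\cdot)\,dt$ with $\int_{\Omega^*}|Du_N^*|^2\,d\Leb^N$, so \eqref{eq:euclpolya} holds for every $u\in\LIP_c(\Omega)$ non-negative with non-vanishing slope on $\{u>0\}$. To extend to $u\in\W_0(\Omega)$, I would first reduce to $u\ge 0$ by replacing $u$ with $|u|$ (same $|Du|$ by the chain rule, and by convention $u_N^*:=|u|_N^*$). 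Then approximate $|u|$ by a sequence $u_n\in\LIP_c(\Omega)$, non-negative with $\lip(u_n)\neq 0$ $\mm$-a.e.\ on $\{u_n>0\}$, using density of Lipschitz functions in $\W_0(\Omega)$ together with Lemma~\ref{lem:nonvanishing gradients}. Applying \eqref{eq:euclpolya} to each $u_n$, the uniform energy bound combined with strong $L^2$-convergence of the rearrangements (which follows from the $L^2$-non-expansiveness of the monotone rearrangement and $u_n\to|u|$ in $L^2$) forces weak $W^{1,2}(\Omega^*)$-convergence $(u_n)_N^*\weakto u_N^*$; lower semicontinuity of the Dirichlet energy yields \eqref{eq:euclpolya} in the limit, while $u_N^*\in\W_0(\Omega^*)$ follows from closedness of $\W_0(\Omega^*)$ under $\W_0$-limits since each $(u_n)_N^*\in\LIP_c(\Omega^*)\subset\W_0(\Omega^*)$.

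The delicate point I anticipate is the rigorous justification of the differentiation identity for $\mu$: the set $\{u=t,\,|Du|=0\}$ could in principle carry positive $P(\{u>t\},\cdot)$-measure even though its $\mm$-measure is zero, so one must carefully restrict the coarea formula to $\{|Du|>0\}$ before dividing by $|Du|$, and verify that the remaining part contributes nothing after integration in $t$. A secondary technical hurdle is the density argument producing the non-vanishing-slope Lipschitz approximants (handled by Lemma~\ref{lem:nonvanishing gradients}) and the extraction of weak $W^{1,2}$-compactness of their rearrangements from the uniform control on their Dirichlet energies, to conclude $u_N^*\in\W_0(\Omega^*)$ in the passage to the limit.
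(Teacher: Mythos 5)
Your proposal follows the same overall scheme as the paper's proof: a coarea--plus--Cauchy--Schwarz argument giving $\int_{\{u\le s\}}|Du|^2\,\d\mm\ge\int_0^s\Per(\{u>t\})^2/(-\mu'(t))\,\d t$, Lipschitz regularity and compact support of $u^*_N$ via the isoperimetric hypothesis and connectedness of $\X$, the identity $-\mu'(t)=N\omega_N^{1/N}\mu(t)^{(N-1)/N}/\lip(\tilde u^*_N)(r_t)$ on the Euclidean side, and for (ii) the reduction to (i) through Lemma~\ref{lem:nonvanishing gradients}, $L^2$-contractivity of the rearrangement and lower semicontinuity. The paper outsources the first and third steps to \cite{MS20,NV21}; you re-derive them, and your execution of the first step is genuinely different in one respect: you apply Cauchy--Schwarz against the perimeter measures $P_t:=\Per(\{u>t\},\cdot)$ on each level set after ``dividing the coarea formula by $|Du|$'', whereas \cite{MS20} (hence the paper) apply Cauchy--Schwarz with respect to $\mm$ on the strips $\{a<u\le b\}$, which only uses the one-sided bound $|{\bf D}u|\le\lip(u)\,\mm$.

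Two points in your version therefore need care. First, $P_t$ is typically singular with respect to $\mm$ (it is concentrated on the essential boundary), so expressions such as $\int|Du|\,\d P_t$ are not well defined for the $\mm$-a.e.\ class $|Du|$; you must use the pointwise defined slope $\lip(u)$ in all perimeter integrals (Theorem~\ref{thm:lip pi} only identifies it with $|Du|$ $\mm$-a.e.). Relatedly, the \emph{equality} $-\mu'(t)=\int(\lip u)^{-1}\,\d P_t$, and already the absolute continuity of $\mu$ that you assert, require the identity $|{\bf D}u|=\lip(u)\,\mm$ for Lipschitz functions in PI spaces, which is not what Theorem~\ref{thm:lip pi} provides (that statement concerns the $2$-weak upper gradient, and the paper only ever uses $|{\bf D}u|\le\lip(u)\,\mm$); these facts are true, but they are exactly the content for which the paper quotes \cite[Prop.~3.12, Lemma~3.10]{MS20} and \cite{NV21}, so you should either quote them likewise or note that for \eqref{eq:improved polya} the one-sided bound $\int_{\{\lip u>0\}}(\lip u)^{-1}\,\d P_t\le-\mu'(t)$, which does follow from the coarea formula and $|{\bf D}u|\le\lip(u)\,\mm$, suffices. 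Second, the ``delicate point'' you flag is real --- if $P_t(\{\lip u=0\})>0$ the restricted Cauchy--Schwarz only controls $P_t(\{\lip u>0\})^2$ rather than $\Per(\{u>t\})^2$ --- but it closes in one line: by the coarea formula $\int_0^\infty P_t(\{\lip u=0\})\,\d t=|{\bf D}u|(\{\lip u=0\})\le\int_{\{\lip u=0\}}\lip(u)\,\d\mm=0$, hence $P_t(\{\lip u=0\})=0$ for a.e.\ $t$. With these repairs (or by simply following the strip-wise Cauchy--Schwarz of \cite{MS20}, as the paper does) your argument for (i) is correct, and your treatment of the support of $u^*_N$, of the Euclidean-side identity, and of the approximation step for (ii) coincides with the paper's.
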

\begin{proof}

    It is enough to prove $i)$, since $ii)$ then follows  by approximation with   Lipschitz functions using  Lemma \ref{lem:nonvanishing gradients} as in \cite[Theorem 3.6]{NV21} (see also \cite{MS20}).
    
    We fix $u \in \LIP_c(\Omega)$, $u\not\equiv 0$, with $\lip(u)\neq 0$  $\mm$-a.e.\  $\{u>0\}$. Set $M\coloneqq\sup u.$
Under these assumptions $\mu$ is strictly monotone, absolutely continuous (hence differentiable almost everywhere) and 
\begin{equation}\label{eq:polya part 2}
	\int_{\{u\le s\}} |D  u|^2\, \d \mm \ge \int_0^s  \frac{\Per(\{u>t\})^2 }{-\mu'(t)} \, \d t, \quad \forall s\in(0,M].
\end{equation}
This can be seen arguing exactly as in the proof of \cite[Prop. 3.12 and (3.23)]{MS20} (see also \cite{NV21}), recalling also that $|{\bf D}u|\le \lip(u)\mm=|Du|\mm$ (see \eqref{eq:lip pi}). Next we claim that $u^*_N \in \LIP_c(\Omega^*).$ Recall that by definition $\Omega^*=B_r(0)\subset \rr^N$, where  $r>0$ satisfies $\Leb^N(B_r(0))=\mea(\Omega).$ From the definitions $u^*_N(x)=\tilde u^*_N(|x|)$, where $\tilde u^*_N:[0,r]\to \R^+$ is the rearrangement into the space $([0,\infty),|.|,N\omega_N t^{N-1}\d t)$ as defined in \cite[Definition 3.1]{NV21}.  Then the fact that $u^*_N\in \LIP(\Omega^*)$ follows directly from  $\tilde u^*_N\in \LIP[0,r]$ which is proved in  \cite[Prop.\ 3.4]{NV21} under the same assumptions on $u$ and $\Omega$. Finally $\supp (u^*_N)\subset \Omega^*$. Indeed $\supp (u)\subsetneq \Omega$,  otherwise $\Omega$ would be closed and would coincide with $\X$ (as $\X$ is connected). This  implies that $\Leb^N(\{u_N^*>t\})\le \mea(\supp (u))<\mea(\Omega)=\Leb^N(\Omega^*),$ for all $t>0,$ because $\mea(\Omega\setminus \supp(u))>0$, as non-empty open sets in $\X$ have positive measure. Since $u_N^*$ is  a radial function centered at the origin this shows   $\supp (u^*_N)\subset \Omega$.
Next we observe that $\tilde u_{N}^*$ is strictly decreasing in $(0,\mm(\supp(u)))$ (since $\mu(t)$ is continuous) and in particular $\{u^*_{N}>t\}=B_{r_t}(0)$ (and $\{u^*_{N}=t\}=\partial B_{r_t}(0)$) for some $r_t\in[0,\mm(\Omega)]$, for every $t \in (0,M)$.  Note that $r_t$ can be computed explicitly to be $r_t=(\omega_N^{-1}\mu(t))^{1/N}$, which also shows that $(0,M)\ni t\mapsto r_t$ is a strictly monotone and locally absolutely continuous map. In particular
\begin{equation}\label{eq:surface measure}
    \hau^N(\partial B_{r_t}(0))=N \omega_N^{\frac1N} \mu(t)^{\frac{N-1}{N}}.
\end{equation}
Combining these observations with the expression for the derivative of $\mu$ given in \cite[Lemma 3.10]{MS20} (see also \cite[Lemma 3.5]{NV21}) we have 
	\begin{equation} \label{eq:formula mu'}
	    	-\mu'(t)=\int_{\partial B_{r_t}(0)} (\lip (u_{N}^*))^{-1} \d \hau^{N-1}=\frac{N \omega_N^{\frac1N} \mu(t)^{\frac{N-1}{N}}}{\lip (\tilde u_{N}^*)(r_t)}\, \quad \text{for a.e.\  $t\in (0,M)$},
	\end{equation}
where we have used \eqref{eq:surface measure} and that  $\lip (u_{N}^*)(x)=\lip (\tilde u_{N}^*)(|x|)$ which easily follows from the identity $u^*_N(x)=\tilde u^*_N(|x|)$.
Plugging the above in \eqref{eq:polya part 2} we reach
	\begin{align*}
	\int_{\{u\le s\}} |Du|^2\, \d \mm &\ge \int_0^s  \frac{\Per(\{u>t\})^2\lip (\tilde u_{N}^*)(r_t) }{N \omega_N^{\frac1N} \mu(t)^{\frac{N-1}{N}}} \, \d t \\ 
	 &=\int_0^s  \left(\frac{\Per (\{u>t\})}{N\omega_N^{\frac1N}\mu(t)^{\frac{N-1}{N}}} \right)^2 \lip (\tilde u_{N}^*)(r_t)N\omega_N^{\frac1N}\mu(t))^{\frac{N-1}{N}} \, \d t\\
	 &=\int_0^s  \left(\frac{\Per (\{u>t\})}{N\omega_N^{\frac1N}\mu(t)^{\frac{N-1}{N}}} \right)^2 \int_{\partial B_{r_t}(0)} \lip (u_{N}^*) \d \hau^{N-1}  \, \d t\,,
	\end{align*}
	where for the last step we argue as in \eqref{eq:formula mu'}. This concludes the proof.
\end{proof}

\subsection{RCD spaces}\label{sec:rcd}
For brevity we do not recall the definition of $\RCD(K,N)$ spaces  (with $N\in[1,\infty)$ and $K \in \rr$) since it will not be directly used in this note, instead  we recall here all the properties of these spaces that will be needed. For further details on the definition and on the theory of metric measure spaces with synthetic Ricci curvature lower bound we refer to the surveys \cite{AmbICM,DGmeetsG} and references therein.

First recall that every $\RCD$ space is infinitesimally Hilbertian from the very definition.
In every $\RCD(K,N)$ space  the {\emph{Bishop-Gromov inequality} holds  \cite{St2}, that is 
\begin{equation}\label{eq:BG}
    \frac{\mea(B_r(x))}{v_{K,N}(r)}\ge  \frac{\mea(B_R(x))}{v_{K,N}(R)},\quad \forall x \in \X, \, \forall 0<r<R,
\end{equation}
where the quantities $v_{K,N}(r)$ coincides, for $N\in \nn$, with the volume of the ball of radius $r$ in the model space of curvature $K$ and dimension $N$. For the definition of $v_{K,N}$ for non integer $N$ see \cite{St2}, however in the results of this note  only the case $N\in \nn$ will be relevant. 

As a consequence of \eqref{eq:BG} we obtain that for every $R_0>0$,  $N<+\infty$  and $K \in \rr$  there exists a constant $C_{R_0,K,N}$ such that for every $\RCD(K,N)$ space $\Xdm$ it holds:
\begin{equation}\label{eq:RCD doulbing}
    \frac{\mea(B_r(x))}{\mea(B_R(x))}\ge C_{R_0,K,N} \left(\frac{r}{R}\right)^N, \quad \forall x \in \X, \, \forall 0<r<R\le R_0.
\end{equation}
Taking $R=2r$ this shows also that every $\RCD(K,N)$ space with $N<+\infty$ is uniformly locally doubling (recall Definition \ref{def:PI}).
It is also proved in \cite{Rajala12,Rajala12-2} that every $\RCD(K,N)$ space supports also a weak local (1,1)-Poincar\'e inequality. Combining the last two observations we conclude that every $\RCD(K,N)$ space with $N<+\infty$ is an  infinitesimally Hilbertian PI space. 

We recall the following embedding result.
\begin{prop}[\hspace{1sp}{\cite[Theorem 6.3, $ii)$]{GMS13}}]\label{prop:embedding}
    Let $\Xdm$ be an $\rcd(K,N)$ space and let $\Omega \subset \X$ be bounded. Then the inclusion $\W_0(\Omega)\hookrightarrow L^2(\mm)$ is compact.
\end{prop}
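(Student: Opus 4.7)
The proof is essentially a direct consequence of the Rellich--Kondrachov compactness theorem for PI spaces already recalled in the excerpt as Theorem \ref{thm:compact embedding}. The plan is to verify that the RCD hypothesis furnishes the PI structure required, and then to reconcile the discrepancy between the target spaces $L^2(\Omega)$ and $L^2(\mm)$ appearing in the two statements.

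First I would observe that Section \ref{sec:rcd} explicitly establishes that every $\rcd(K,N)$ space with $N<+\infty$ is an infinitesimally Hilbertian PI space: the uniformly local doubling property is extracted from Bishop--Gromov \eqref{eq:RCD doulbing} (taking $R=2r$), while the weak local $(1,1)$-Poincar\'e inequality is the content of Rajala's theorem. Hence, assuming $\Omega\subset \X$ to be open and bounded (which is the natural setting in which the space $\W_0(\Omega)$ has been defined in Section \ref{sec:calculus}), Theorem \ref{thm:compact embedding} directly yields that the inclusion $\W_0(\Omega)\hookrightarrow L^2(\Omega)$ is compact.

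It remains to transfer this from $L^2(\Omega)$ to $L^2(\mm)$. Here I would invoke the discussion preceding \eqref{eq:def isoSob}: every $f\in\W_0(\Omega)$ vanishes $\mm$-a.e.\ on $\X\setminus\Omega$, so the canonical restriction map $T(f)=f\restr{\Omega}$ is an isometry from $(\W_0(\Omega),\|\cdot\|_{L^2(\mm)})$ onto its image in $L^2(\Omega)$. Consequently, any sequence $(f_n)\subset \W_0(\Omega)$ bounded in the $\W(\X)$-norm has images $T(f_n)$ bounded in $\W(\Omega)$, hence admits an $L^2(\Omega)$-convergent subsequence by Theorem \ref{thm:compact embedding}; extending the limit by zero outside $\Omega$ gives an $L^2(\mm)$-limit of the original sequence, proving compactness of $\W_0(\Omega)\hookrightarrow L^2(\mm)$.

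If one instead wished to avoid the black-box appeal to Theorem \ref{thm:compact embedding}, the only real obstacle is the Rellich step itself. In an $\rcd(K,N)$ space this can be obtained along the Haj\l{}asz--Koskela scheme: the doubling inequality \eqref{eq:RCD doulbing} together with the weak Poincar\'e inequality upgrades to a Sobolev embedding on balls, from which uniform integrability and tightness (the latter coming from the boundedness of $\Omega$ and the zero-boundary condition encoded in $\W_0$) deliver $L^2$-precompactness by a standard Fr\'echet--Kolmogorov type argument. The technicalities of the Sobolev step would be the main work to reproduce; fortunately, in the present context they are entirely subsumed by Theorem \ref{thm:compact embedding}.
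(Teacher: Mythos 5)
Your argument is correct. Note, however, that the paper does not prove this proposition at all: it is imported verbatim from the literature (the bracketed citation to Theorem 6.3, ii) of the reference GMS13), so your derivation is a genuinely different — and essentially self-contained — route built only from tools the paper already states: the fact from Section \ref{sec:rcd} that every $\rcd(K,N)$ space (with the paper's convention $N<\infty$) is an infinitesimally Hilbertian PI space, the Rellich--Kondrachov statement of Theorem \ref{thm:compact embedding}, and the observation preceding \eqref{eq:def isoSob} that any $f\in\W_0(\Omega)$ vanishes $\mm$-a.e.\ outside $\Omega$, which correctly converts $L^2(\Omega)$-compactness of the restrictions into $L^2(\mm)$-compactness of the zero-extensions. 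What the paper's citation buys is only brevity (and the fact that the reference works directly in the $\rcd$/Dirichlet-form framework); what your route buys is that the proposition becomes a corollary of results already quoted, with no new external input. Two small remarks: the proposition as stated only assumes $\Omega$ bounded, while $\W_0(\Omega)$ is defined for open sets and Theorem \ref{thm:compact embedding} requires openness — this is harmless, since for a general bounded $\Omega$ one has $\LIP_{bs}(\Omega)\subset\LIP_{bs}(\Omega')$ for any bounded open $\Omega'\supset\Omega$, hence $\W_0(\Omega)\subset\W_0(\Omega')$ and compactness passes to the closed subspace (and in the paper the result is anyway only applied to open sets); and your final paragraph sketching a re-proof of the Rellich step via Haj\l{}asz--Koskela is not needed for the argument and is only a sketch, so the proof should be regarded as resting on Theorem \ref{thm:compact embedding} as a black box, which is legitimate.
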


Given an $\rcd(K,N)$ space $\Xdm$ we define the \emph{Bishop-Gromov} density function $\theta_N:\X \to (0,+\infty]$ by
\begin{equation}\label{eq:density}
    \theta_N(x)\coloneqq \lim_{r\to 0^+} \frac{\mea(B_r(x))}{\omega_N r^N}=\lim_{r\to 0^+} \frac{\mea(B_r(x))}{v_{N,K}(r)}\,,
\end{equation}
where the existence of the limits is ensured by \eqref{eq:BG} (see  \cite[Def. 1.9]{DePhGi}). As shown in \cite[Lemma 2.2]{DePhGi} the function $\theta_N$ is lower-semicontinuous in $\X.$ 

A key property that we will need is the validity of a local almost-Euclidean isoperimetric inequality. We will use the following version essentially proved in \cite{NV21} (see also \cite{CM20,AntPasPoz21} for similar results in the setting of $\rcd$ spaces and \cite{BM82} for the Riemannian setting). 
\begin{theorem}[Local almost-Euclidean isoperimetric inequality]\label{thm:local euclidean isop}
	Let $\Xdm$ be an $\rcd(K,N)$ space for some $N\in(1,\infty), K \in \rr$.  Then for every  $x \in \X$ with $\theta_N(x)<+\infty$ and every $\eps \in(0,\theta_N(x))$ there exists $\rho=\rho(\eps, x,N)$ such that
	\begin{equation}\label{eq:partial local euclidean isop}
		\Per (E) \ge  \mm(E)^\frac{N-1}{N}  N\omega_N^{\frac1N}(\theta_{N}(x)-\eps)^{\frac1N}(1-\eps), \quad \forall \, E \subset B_\rho(x) \text{ Borel.}
	\end{equation}
\end{theorem}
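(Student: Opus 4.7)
The plan is to establish the inequality through a blow-up/compactness argument, reducing the problem to the sharp global isoperimetric inequality in the tangent cone at $x$. For each $\rho>0$, consider the rescaled metric measure space $\X_\rho\coloneqq (\X,\rho^{-1}\sfd,\rho^{-N}\mea)$, which is $\RCD(K\rho^2,N)$ by the scaling properties of the curvature-dimension condition. By Gromov compactness and the pmGH-stability of $\RCD(K,N)$, any sequence $\rho_n\to 0^+$ admits a subsequence along which $\X_{\rho_n}$ converges in the pointed measured Gromov--Hausdorff sense to a pointed $\RCD(0,N)$ space $(Y,\sfd_Y,\mea_Y,o)$. Since $\theta_N(x)<+\infty$, the Bishop--Gromov ratio at $o$ equals $\theta_N(x)$, and by the volume-cone-to-metric-cone rigidity of De Philippis--Gigli, $Y$ is a metric measure cone with apex $o$; being a cone, its asymptotic volume ratio satisfies $\avr(Y)=\theta_N(x)$.

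The second ingredient is the sharp Euclidean-type isoperimetric inequality available in any $\RCD(0,N)$ space with positive asymptotic volume ratio (the analogue of Balogh--Krist\'aly in the $\RCD$ setting, proved e.g.\ via $\CD$-localization by Antonelli--Pasqualetto--Pozzetta--Violo):
\[
\Per_Y(F)\ge N\omega_N^{1/N}\,\theta_N(x)^{1/N}\mea_Y(F)^{(N-1)/N},\qquad\forall\,F\subset Y \text{ Borel}.
\]

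With these in hand, I would argue by contradiction. Suppose \eqref{eq:partial local euclidean isop} fails for some $\eps\in(0,\theta_N(x))$; then there exist $\rho_n\to 0^+$ and Borel sets $E_n\subset B_{\rho_n}(x)$ with
\[
\Per(E_n)<\mea(E_n)^{\frac{N-1}{N}}N\omega_N^{1/N}(\theta_N(x)-\eps)^{1/N}(1-\eps).
\]
After rescaling, $\tilde E_n\subset B_1^{\X_{\rho_n}}(x)$ and the rescaled perimeter/measure scale homogeneously, giving the same violation in $\X_{\rho_n}$. Up to further normalization (dilating once more so that $\mea_{\rho_n}(\tilde E_n)$ converges to a positive finite number, which is possible since both sides scale with the same power of the measure), I pass to a pmGH-limit set $F\subset Y$ of finite perimeter. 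By the lower-semicontinuity of perimeter under pmGH convergence of $\RCD$ spaces (Ambrosio--Bru\'e--Semola), the inequality passes to the limit and contradicts the sharp isoperimetric bound in $Y$.

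The main obstacle, and the reason the result is only \emph{essentially} contained in \cite{NV21}, is the technical machinery required to make the blow-up rigorous: pmGH-compactness of the rescaled spaces, the identification of the tangent as a cone with the correct $\avr$, the existence and lower-semicontinuity of perimeter along pmGH-converging sequences, and the right normalization of $\mea_{\rho_n}(\tilde E_n)$ so that the limit set $F$ is nontrivial and has finite mass. Each of these is by now standard in the $\RCD(K,N)$ literature, but chaining them together for a local isoperimetric inequality with \emph{sharp} constant depending on $\theta_N(x)$ requires some care.
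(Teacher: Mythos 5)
Your blow-up strategy is genuinely different from the paper's argument, but as written it has a real gap at the compactness step. The problem is the sentence ``up to further normalization (dilating once more so that $\mea_{\rho_n}(\tilde E_n)$ converges to a positive finite number) \dots I pass to a pmGH-limit set $F\subset Y$''. If the violating sets satisfy $\mea(E_n)/\rho_n^N\to 0$ (which is exactly the regime this lemma is mostly about: sets much smaller than the ball containing them), then after the second dilation by scales $s_n$ with $s_n^{-N}\mea(E_n)=1$ the sets are only known to lie in balls of radius $\rho_n/s_n\to+\infty$ around $x$. They may fragment and spread, so the $L^1_{\loc}$-limit $F$ given by Ambrosio--Bru\'e--Semola compactness can satisfy $\mea_Y(F)<\lim_n \mea_{s_n}(E_n)$, possibly $\mea_Y(F)=0$. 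Lower semicontinuity of the perimeter then only yields $\Per_Y(F)\le\liminf_n\Per_{s_n}(E_n)$, which is not in contradiction with the sharp AVR-isoperimetric inequality once mass has been lost at infinity. Repairing this requires a concentration--compactness analysis (splitting into concentrating pieces around possibly drifting centers, controlling the pointed limits at those centers via lower semicontinuity of $\theta_N$ and Bishop--Gromov, handling the vanishing part, and recombining via subadditivity of $t\mapsto t^{(N-1)/N}$); none of this is automatic, and it is precisely the hard part of the sharp-constant results you quote. Minor additional points: the cone rigidity of De Philippis--Gigli is not needed (the identity $\mea_Y(B_r(o))=\omega_N\theta_N(x)r^N$ already gives $\avr(Y)=\theta_N(x)$), and in the case $\liminf_n\mea(E_n)/\rho_n^N>0$ your argument does go through since the sets remain confined in a fixed ball of the blow-up.

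For comparison, the paper's proof is not a compactness argument at all: it invokes the quantitative local almost-Euclidean isoperimetric inequality of \cite{NV21} (Theorem 3.9 there), which is stated at a definite scale $R\le\bar R(\eps,K,N)$ with constant governed by the volume ratio $\theta_{N,R}(x)$ and the ratio $C_{\eps,R}(x)=\theta_{N,\eps R}(x)/\theta_{N,R}(x)$, and then simply chooses $R$ small enough that $\theta_{N,R}(x)\in(\theta_N(x)-\eps,\theta_N(x)+\eps)$ and $C_{\eps,R}(x)\le 3$, taking $\rho=\eps(\bar r\wedge\bar R)$. This avoids blow-ups, tangent cones and the sharp global AVR inequality entirely, at the price of relying on the quantitative input from \cite{NV21}. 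If you want to pursue your route, you must either restrict to the non-vanishing regime and treat the small-volume regime separately, or carry out the concentration--compactness scheme in full.
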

\begin{proof}
	It is sufficient to prove the statement with $\eps \in (0,\theta_{N}(x)/2\wedge 1/2)$. From \cite[Theorem 3.9]{NV21}  there exists $\bar R=\bar R(\eps,K,N)$ such that for every $x \in \X$, $R\in(0,\bar R]$ it holds
	\[
		\Per (E) \ge  \mm(E)^\frac{N-1}{N}  N\omega_N^{\frac1N}\theta_{N,R}(x)^{\frac1N}(1-(2C^{1/N}_{\eps,R}(x)+1)\eps-\eps), \quad \forall \, E \subset B_{\eps R}(x),
	\]
	where $\theta_{N,\rho}(x)\coloneqq\frac{\mea(B_\rho(x))}{\omega_N \rho^N}$ and $C_{\eps,R}(x)\coloneqq \frac{\theta_{N,\eps R}(x)}{\theta_{N,R}(x)}.$ Since $\theta_N(x)=\lim_{\rho \to 0}\theta_{N,\rho}(x)<+\infty$  there exists $\bar r=\bar r (x,\eps)$ so that $\theta_{N,\rho}(x)\in (\theta_{N}(x)-\eps,\theta_N(x)+\eps)$ for all $\rho\le \bar r$. Moreover since $\eps<\theta_{N}(x)/2$ we have that for every $R\le \bar r$  it holds $C_{\eps,R}(x)\le \frac{\frac{3}{2}\theta_N(x)}{\frac{1}{2}\theta_N(x)}= 3.$ Hence choosing $\rho= \rho(x,\eps,K,N)\coloneqq \eps(\bar r(\eps,x)\wedge \bar R(\eps,K,N))$, we have that 
	\[
	\Per (E) \ge  \mm(E)^\frac{N-1}{N}  N\omega_N^{\frac1N}(\theta_{N}(x)-\eps)^{\frac1N}(1-8\eps), \quad \forall \, E \subset B_\rho(x),
	\]
	from which the conclusion follows.
\end{proof}
Next we introduce the subclass  of non-collapsed $\rcd(K,N)$ space.
\begin{definition}[\hspace{1sp}\cite{DePhGi}]
    An $\rcd(K,N)$ space $\Xdm$ is said to be  \emph{non-collapsed} if $\mea=\hau^N$, where $\hau^N$ denotes the $N$-dimensional Hausdorff measure on $(\X,\sfd)$.
\end{definition}
After the works \cite{H20,BGHZ23} this definition is known to be equivalent to one given in \cite{Kit17}. 
As showed in \cite[Theorem 1.12]{DePhGi} if $\Xdm$ is a non-collapsed $\rcd(K,N)$ space, then $N\in \nn.$ Moreover by \cite[Corollary 1.7]{DePhGi} (see also \cite[Theorem 1.4]{AHT}) it holds that
\begin{equation}\label{eq:theta 1}
\begin{split}
     & \theta_N(x)=1,\quad  \mea\text{-a.e.\ $x \in \X$,} \\
     &  \theta_N(x)\le1, \quad \forall x \in \X.
\end{split}
\end{equation}

\begin{remark}[Consistency with the smooth setting]\label{rmk:rcd ok}
It is worth to recall that $\rcd$ spaces are compatible with the smooth setting in the following sense.
    Any $N$-dimensional Riemannian manifold $(M,g)$ with Ricci curvature bounded below by  a number $K\in \rr$, i.e.\ ${\rm Ric}_g\ge K g$, endowed with the Riemannian distance and volume measure is a non-collapsed $\rcd(K,N)$ metric measure space \cite{VonSt,CEMS01}. In particular the metric measure space $(\rr^N,|\cdot|,\hau^N)$, where $\hau^N$ is the $N$-dimensional Hausdorff measure is a non-collapsed $\RCD(0,N)$ space. \fr
\end{remark}

We conclude recalling the validity of the Weyl law in non-collapsed setting proved in \cite{AHT,ZZ}. 
\begin{theorem}[Weyl law in $\RCD$ spaces]\label{thm:weyl rcd}
	Let $(\X,\sfd,\hau^N)$ be an $\rcd(K,N)$ space and $\Omega \subset \X$ be open and bounded. Then
	\begin{equation}\label{eq:weyl rcd}
		\lim_{k\to +\infty} \frac{k}{\lambda_k^\cD(\Omega)^{N/2}}=\frac{\omega_N}{(2\pi)^N} \hau^N(\Omega),
	\end{equation}
 where $\{\lambda_k^\cD(\Omega)\}_{k \in \nn}$ denotes the spectrum of the Dirichlet Laplacian in $\Omega$ defined in \eqref{eq: Dirlisteigen}.
\end{theorem}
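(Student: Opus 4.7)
The plan is to follow the classical heat-kernel + Tauberian strategy, adapted to the $\rcd$ setting in the spirit of \cite{AHT,ZZ}. Let $\{u_k\}$ be an $L^2(\Omega)$-orthonormal basis of Dirichlet eigenfunctions with eigenvalues $\lambda_k^\cD(\Omega)$, and let $p_t^\Omega(x,y)$ be the Dirichlet heat kernel on $\Omega$. The first step is the spectral identity
\[
\theta(t)\coloneqq \sum_{k\ge 1} e^{-\lambda_k^\cD(\Omega)t}=\int_\Omega p_t^\Omega(x,x)\,\d \hau^N(x),
\]
which is standard for self-adjoint operators with discrete spectrum (available here by Proposition \ref{prop:embedding} and Lemma \ref{lem:discrete spectrum}). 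The strategy is to prove the small-time asymptotic $\theta(t)\sim \hau^N(\Omega)/(4\pi t)^{N/2}$ as $t\to 0^+$, and then obtain the Weyl law by Karamata's Tauberian theorem applied to the Stieltjes measure $\d N(\lambda)$ with $N(\lambda)=\#\{k:\lambda_k^\cD(\Omega)\le\lambda\}$.

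The second step is the pointwise small-time asymptotic of the heat kernel. On the whole space $(\X,\sfd,\hau^N)$, one uses the fact that in a non-collapsed $\rcd(K,N)$ space the density function satisfies $\theta_N\le 1$ everywhere and $\theta_N=1$ $\hau^N$-a.e.\ (see \eqref{eq:theta 1}); in particular $\hau^N$-a.e.\ point is a regular point with tangent space $(\R^N,|\cdot|,\hau^N)$. Via the stability of heat kernels under pointed measured Gromov–Hausdorff convergence of $\rcd(K,N)$ spaces (together with the uniform Gaussian upper/lower bounds of Sturm and Jiang–Li–Zhang), one blows up at a regular point $x$ and deduces
\[
\lim_{t\to 0^+}(4\pi t)^{N/2}\,p_t^{\X}(x,x)=\frac{1}{\theta_N(x)}=1\qquad \text{for }\hau^N\text{-a.e.\ }x\in \X,
\]
which is the key convergence at the heart of \cite{AHT,ZZ}.

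The third step is to transfer this from the whole space heat kernel $p_t^\X$ to the Dirichlet heat kernel $p_t^\Omega$ on the bounded open set $\Omega$. Since $0\le p_t^\Omega(x,y)\le p_t^\X(x,y)$ by the parabolic maximum principle, and since for any $x\in\Omega$ with $r=\sfd(x,\partial\Omega)>0$ one has $p_t^\X(x,x)-p_t^\Omega(x,x)\to 0$ exponentially fast as $t\to 0^+$ (by the standard heat-kernel comparison $p_t^\X(x,x)-p_t^\Omega(x,x)\le \sup_{y\in \partial B_r(x)}p_t^\X(x,y)$ together with the Gaussian upper bound $p_t^\X(x,y)\lesssim t^{-N/2}e^{-\sfd(x,y)^2/ct}$), one gets
\[
\lim_{t\to 0^+}(4\pi t)^{N/2}\,p_t^\Omega(x,x)=1\qquad \text{for }\hau^N\text{-a.e.\ }x\in \Omega.
\]
Combining with the uniform bound $(4\pi t)^{N/2}p_t^\Omega(x,x)\le C(K,N,\diam(\Omega))$ valid on any fixed compact time interval, dominated convergence gives $(4\pi t)^{N/2}\theta(t)\to \hau^N(\Omega)$, and Karamata delivers
\[
N(\lambda)\sim \frac{\omega_N}{(2\pi)^N}\,\hau^N(\Omega)\,\lambda^{N/2},
\]
which is equivalent to \eqref{eq:weyl rcd}.

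The main obstacle is the pointwise heat-kernel asymptotic at regular points, which genuinely uses the full strength of the non-collapsed $\rcd$ theory: stability of Cheeger energies and associated semigroups under pmGH-convergence, uniform two-sided Gaussian bounds independent of the space, and the fact that $\theta_N=1$ $\hau^N$-a.e.\ (which fails in the collapsed case and is precisely why the statement is restricted to non-collapsed spaces). All other steps are semigroup-theoretic and essentially formal once that asymptotic is established.
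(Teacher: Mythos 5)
Your proposal is correct in outline, but it takes a different route from what the paper actually writes: the paper does not reprove the heat-kernel asymptotics at all. Its proof of Theorem \ref{thm:weyl rcd} consists of (a) quoting the counting-function form of the Weyl law, \eqref{eq:weyl rcd-count}, directly from \cite{AHT,ZZ}, (b) observing that the extra hypothesis $\diam(\Omega)<\diam(\X)$ appearing in \cite{ZZ} is only used there to guarantee discreteness of the Dirichlet spectrum, which here follows from boundedness of $\Omega$ via Proposition \ref{prop:embedding}, and (c) converting $N(\lambda)\sim \frac{\omega_N}{(2\pi)^N}\hau^N(\Omega)\lambda^{N/2}$ into the eigenvalue form \eqref{eq:weyl rcd} by the elementary sandwich $N^-(\lambda_k^\cD(\Omega))\le k\le N(\lambda_k^\cD(\Omega))$. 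What you do instead is re-derive the cited input: the trace identity, the pointwise small-time asymptotic $(4\pi t)^{N/2}p_t^{\X}(x,x)\to 1/\theta_N(x)=1$ at $\hau^N$-a.e.\ point via pmGH-stability and Gaussian bounds, the ``not feeling the boundary'' localization to $p_t^\Omega$, dominated convergence, and Karamata --- which is essentially the proof strategy of \cite{AHT,ZZ} themselves. This buys self-containedness in spirit, at the price of invoking substantial machinery (stability of heat kernels under pmGH convergence, two-sided Gaussian bounds, the a.e.\ regularity/density statements) that the paper deliberately outsources. Two small points if you wanted to make your sketch rigorous: the localization bound should be stated with a supremum over times as well, e.g.\ $p_t^\X(x,x)-p_t^\Omega(x,x)\le \sup_{s\in(0,t],\,\sfd(y,x)\ge r}p_s^\X(x,y)$ (your displayed inequality with only time $t$ is not quite the correct comparison, though the Gaussian upper bound makes the conclusion the same); and the uniform bound $(4\pi t)^{N/2}p_t^\Omega(x,x)\le C$ requires a lower volume bound $\hau^N(B_{\sqrt t}(x))\gtrsim t^{N/2}$ uniformly for $x\in\overline\Omega$, which follows from \eqref{eq:RCD doulbing} plus $\inf_{x\in\overline\Omega}\hau^N(B_{R_0}(x))>0$, so the constant depends on $\Omega$ through this infimum and not only on $K,N,\diam(\Omega)$. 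Finally, the passage from the $N(\lambda)$ asymptotic to \eqref{eq:weyl rcd}, which you assert as an equivalence, is exactly the one-line sandwich argument the paper spells out; it is worth including rather than asserting.
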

\begin{proof}
 By the results in \cite{AHT,ZZ} it holds that
    \begin{equation}\label{eq:weyl rcd-count}
		\lim_{\lambda \to +\infty} \frac{N(\lambda)}{\lambda^{N/2}}=\frac{\omega_N}{(2\pi)^N} \hau^N(\Omega),
	\end{equation}
 where $N(\lambda)\coloneqq \#\{k \in \nn \ : \lambda_k^\cD(\Omega)\le \lambda \}$. This implies \eqref{eq:weyl rcd}. To see this, set $N^-(\lambda)\coloneqq \#\{k \in \nn \ : \lambda_k^\cD(\Omega)<\lambda \}$, and observe
 \[
 N(\lambda_{k}^\cD(\Omega)-1)\le N^-(\lambda_{k}^\cD(\Omega))\le k\le N(\lambda_{k}^\cD(\Omega)), \quad \forall k \in \nn.
 \]
 Note that in \cite{ZZ} formula \eqref{eq:weyl rcd-count}  is stated with the further assumption that  $\diam(\Omega)<\diam(\X)$ (when $\Omega \neq \X$), however this assumption is needed in \cite{ZZ} only to ensure the discreteness of the spectrum, for which we know by Proposition \ref{prop:embedding} that the boundedness of $\Omega$ is sufficient.
\end{proof}

\section{Sobolev spaces and Neumann eigenfunctions in uniform domains}\label{sec:unif domain}
Our method to deal with Neumann eigenfunctions in  domains with irregular boundary  in $\rr^N$ (or in more abstract $\rcd$ spaces)   will be to translate the problem to a global one, by viewing the domain as a metric measure space. The idea is that if the boundary satisfies an appropriate   regularity condition, then the resulting m.m.\ space is a PI space and in particular has good analytic properties, like the isoperimetric inequality and embedding theorems (recall Section \ref{sec:pi}). We stress that the possibility of using abstract metric spaces to deal with Neumann and mixed boundary value problems in irregular domains of the Euclidean space was noted before  (see e.g.\ \cite[Page 33]{BB13}). 

The key notion that we will use is the one of \emph{uniform domain} that we now introduce.
\begin{definition}[Uniform domains]\label{def: unif dom}
     A bounded open subset $\Omega$ of a metric space $(\X,\sfd)$ is called a \emph{uniform domain} if there exists a constant $C>1$ such that every pair of points $x,y \in \Omega$ can be joined by a rectifiable curve  $\gamma:[0,1]\to \Omega$ such that $l(\gamma)\le C\sfd(x,y)$ and 
    \[
    \sfd(\gamma(t),\partial \Omega)\ge C^{-1} \min\left(l(\gamma\restr{[0,t]}),l(\gamma\restr{[t,1]})\right), \quad \forall t\in[0,1].
    \]
\end{definition}
Uniform domains were introduced in \cite{MartioSarvas} and \cite{Jones81} (see also \cite{Va88}) and are  central in the theory of BV and Sobolev extension domains (see \cite{Jones81,GehOs,HK91,HK92,BjSh07,Lahti}).
Uniform domains are also equivalent to one-sided non tangentially accessible (1-sided NTA) domains 
(see e.g.\ \cite[Theorem 2.15]{AHMNT17} or the Appendix in \cite{nta}).
They include Lipschitz domains, but also more irregular domains such as the quasi-disks, i.e.\  images of the unit ball under a global quasi-conformal maps  (see \cite[Theorem 2.15]{MartioSarvas}, \cite{GehOs}, \cite[Section 3]{JK82} or also \cite[Remarks 2.1]{ToroSurvey}). In particular the  interior of the Koch snowflake is an example of  uniform domain. It has also been proved recently that in every doubling quasi-convex metric space (in particular any bounded $\rcd(K,N)$ space) any bounded open set can be approximated from inside and outside by uniform domains, see \cite{Raj20} for the precise statement.  

It easily follows from the definition  that every uniform domain is both connected and locally connected (see e.g.\ \cite{Martio}).

The main goal of this section is to prove the following theorem. It says that a uniform domain in a non-collapsed $\RCD(K,N)$ space (and in particular in $\rr^N$) when viewed as a m.m.\ space admits an almost Euclidean isoperimetric inequality near almost every point  (point $\ref{it:isop}$) and the eigenvalues of the Laplacian satisfy a weak version of the Weyl law  (point $\ref{it:weyl}$). Item $\ref{it:techdim}$ is a technical condition that we will need to apply the Faber-Krahn inequality in Proposition \ref{prop:Faber Kran PI}. We remark that, differently from the preceding sections, in the following statement we will denote by $\Y$ the ambient space, while the notation $\Xdm$ is reserved to the metric measure space associated to the closure of the domain $\Omega\subset \Y$.  
\begin{theorem}\label{th:almost-iso}
    Let  $(\Y,\tilde{\sfd},\hau^N)$ be an $\RCD(K,N)$ space and let $\Omega\subset \Y$ be a uniform domain. Then the metric measure space $\Xdm\coloneqq (\overline\Omega,\tilde{\sfd}\restr{\overline \Omega}, \hau^N\restr {\overline\Omega})$ is an infinitesimally Hilbertian PI space and satisfies the following properties:
   \begin{enumerate}[label=(\roman*)]
   \item\label{it:techdim} there exists a constant $c>0$ such that
	\[
	\frac{\mea(B^{\X}_r(x))}{\mea(B^{\X}_R(x))}\ge c\left(\frac{r}{R}\right)^N, \quad \forall\, x \in \X,\, \forall\, 0<r<R.
	\]
       \item\label{it:isop} For every $\eps>0$ there exists a closed set $C_\eps\subset \X$ with $\mea(C_\eps)=0$ such that for every $x \in \X\setminus C_\eps$ there exists a constant $\rho=\rho(x,N,\eps)>0$  satisfying
    \begin{equation}\label{eq:local eucl}
		    \Per_\X (E)\ge(1-\eps)N\omega_N^{\frac1N}\mea(E)^{\frac{N-1}{N}}, \quad \forall \, E \subset B^\X_{\rho}(x)\, \text{ Borel,}
		\end{equation}
  where  $ \Per_\X$ and $B^\X_{\rho}(x)$ are respectively the perimeter and the metric ball in the space $\Xdm.$
  \item\label{it:weyl} denoted by $\{\lambda_k\}_k$  the spectrum of the Laplacian in $\Xdm$ (recall \eqref{eq:PI discrete sepctrum}) it holds $\lambda_k=\lambda_k^{\cN}(\Omega)$ for all $k\in \nn$ and
  \begin{equation}\label{eq:weyl}
		\limsup_{k\to +\infty} \frac{\lambda_k^{N/2}}{k}\le\frac{(2\pi)^N }{\omega_N\mea(\X)}.
	\end{equation}
   \end{enumerate}
\end{theorem}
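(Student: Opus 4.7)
The plan is to combine the regularity properties of uniform domains in PI spaces with explicit transfer arguments from the ambient space $\Y$ to $\X=\overline{\Omega}$. First, the interior corkscrew characterization of uniform domains guarantees that for every $x\in\overline{\Omega}$ and every sufficiently small scale $r$ there is $y\in\Omega$ with $B^{\Y}_{cr}(y)\subset \Omega\cap B^{\Y}_{(1+c)r}(x)$. Combined with the ambient Bishop--Gromov inequality \eqref{eq:BG}, this yields $\hau^N(B^{\X}_r(x))\sim \hau^N(B^{\Y}_r(x))$ at small scales, whence $\X$ is uniformly locally doubling and property \ref{it:techdim} follows directly from \eqref{eq:RCD doulbing}. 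The weak local $(1,1)$-Poincar\'e inequality on $\X$ is a consequence of the well-known preservation results for Poincar\'e inequalities under uniform-domain restriction in the PI-space setting (Bj\"orn--Shanmugalingam and subsequent works). Infinitesimal Hilbertianity of $\X$ will be inherited from $\Y$ via the identification $\W(\X)\cong\W(\Omega)$ discussed below.

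For \ref{it:isop}, observe that at any point $x$ in the interior $\Omega$ one has $B^{\Y}_r(x)=B^{\X}_r(x)\subset \Omega$ for every $r\le\sfd(x,\partial\Omega)$, and for any Borel $E\subset B^{\X}_r(x)$ Lemma \ref{lem:perimeter inside} gives $\Per_\X(E)=\Per(E)$ in $\Y$. Applying Theorem \ref{thm:local euclidean isop} in $\Y$ at a point where $\theta_N(x)$ is close enough to $1$ therefore transfers \eqref{eq:local eucl} directly to $\X$. The bad set $C_\eps$ can thus be chosen to be a closed $\hau^N$-null superset of $\partial\Omega\cup\{x\in\overline{\Omega}:\theta_N(x)<1-\eps'\}$ for a suitable $\eps'>0$; the low-density set is $\hau^N$-null by the non-collapsing property \eqref{eq:theta 1}. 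That $\hau^N(\partial\Omega)=0$ is a consequence of the corkscrew: the upper density of $\partial\Omega$ at any $x\in\partial\Omega$ is bounded strictly below $1$, which by Lebesgue differentiation in the doubling space $\Y$ is incompatible with $\hau^N(\partial\Omega)>0$.

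Property \ref{it:weyl} hinges on the identification $\W(\Omega)\cong\W(\X)$ as Hilbert spaces with matching minimal weak upper gradients. Thanks to the Sobolev extension property of uniform domains in PI spaces, every $f\in\W(\Omega)$ extends to $\tilde f\in\W(\Y)$ with controlled norm, while the restriction map $f\mapsto f\restr{\overline{\Omega}}$ provides its inverse up to $\mea$-a.e.\ equivalence. Under this identification the intrinsic Dirichlet form on $\X$ coincides with the one defining $\Delta_\cN^\Omega$, hence $\lambda_k=\lambda_k^\cN(\Omega)$. The Weyl bound \eqref{eq:weyl} then follows by combining $\lambda_k^\cN(\Omega)\le\lambda_k^\cD(\Omega)$ from Lemma \ref{lem:discrete spectrum}, the Dirichlet Weyl law Theorem \ref{thm:weyl rcd} applied to $\Omega$, and $\hau^N(\X)=\hau^N(\Omega)$ just established. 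I expect the principal obstacle to be making the identification $\W(\X)\cong\W(\Omega)$ fully rigorous in this abstract $\rcd$ setting: one must verify that the intrinsic Cheeger energy on $\overline{\Omega}$ agrees with the ambient one on $\Omega$, which is precisely where the full analytical strength of the uniformity of $\Omega$ and the corresponding extension and approximation theorems must be invoked.
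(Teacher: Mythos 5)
Your proposal is correct and follows essentially the same route as the paper: item \ref{it:techdim} via the corkscrew property and ambient Bishop--Gromov (the paper's Lemma \ref{lem:dim doubling}), the PI and Hilbertian structure via the uniform-domain extension/restriction results (Theorems \ref{thm:sobolev compatibility} and \ref{thm:uniform implies PI}), item \ref{it:isop} by taking $C_\eps$ to be $\partial\Omega$ together with the low-density points and transferring Theorem \ref{thm:local euclidean isop} through Lemma \ref{lem:perimeter inside}, and item \ref{it:weyl} from Corollary \ref{cor:spectrum compatilbity}, $\lambda_k^\cN(\Omega)\le\lambda_k^\cD(\Omega)$ and the Dirichlet Weyl law with $\hau^N(\partial\Omega)=0$. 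The only point you leave implicit, the existence of a \emph{closed} null bad set, is exactly what the lower semicontinuity of $\theta_N$ provides through the choice $C_\eps=\{\theta_N\le 1-\eps\}\cup\partial\Omega$.
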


\begin{remark}[The `bad' set $C_\eps$]\label{rmk:bad set}
If the ambient space $(\Y,\tilde{\sfd},\hau^N)$ is the Euclidean space then item $ii)$ in Theorem \ref{th:almost-iso} is immediate by taking $C_\eps=\partial \Omega$ and by the isoperimetric inequality (indeed $\partial \Omega$ is negligible as we will show in Lemma \ref{lem:zero boundary}). More generally if $(\Y,\tilde{\sfd},\hau^N)$ is a Riemannian manifold with $\tilde{\sfd}$ the geodesic distance then item $ii)$ follows taking again $C_\eps=\partial \Omega$ and applying the local almost-Euclidean isoperimetric inequality in \cite[Appendice C]{BM82}. 
In the general case, as will be shown in the proof,  the set $C_\eps$ in Theorem \ref{th:almost-iso} can be taken to be 
  $$C_\eps\coloneqq \{x \in \overline\Omega \ : \ \theta_{N}(x)\le 1-\eps\}\cup \partial \Omega,$$
  where $\theta_N:\X\to (0,+\infty]$ is the Bishop-Gromov density function defined in \eqref{eq:density}.  In other words the set $C_\eps$ contains $\partial \Omega$ plus a subset of the singular points, the latter being the points where $\theta_{N}< 1$. Note that $\theta_N\equiv 1$ if $(\Y,\tilde{\sfd},\hau^N)$ is a Riemannian manifold, so that $C_\eps$ reduces to $\partial \Omega,$ in accordance to what we said above. \fr
\end{remark}

We start with some basic properties of uniform domains.
\begin{definition}[Corkscrew-condition]\label{def:cork}
    A bounded open subset $\Omega$ of a metric space $(\X,\sfd)$ satisfies the corkscrew-condition if there exists a constant $\eps>0$ such that 
    for every point $x \in \overline \Omega$ and all $0<r\le \diam(\Omega)$, the set $\Omega\cap B_r(x)$ contains a ball of radius $\eps r.$
\end{definition}
In Definition \ref{def:cork} it is equivalent to require only $x \in \partial \Omega.$ 

For the proof of the following well known fact see e.g.\ \cite[Lemma 4.2]{BjSh07}.
\begin{lemma}\label{lem:cork unif}
    Every uniform domain  satisfies the corkscrew-condition.
\end{lemma}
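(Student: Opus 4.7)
By the observation following Definition \ref{def:cork}, it suffices to produce, for all $x\in\partial\Omega$ and $0<r\le\diam(\Omega)$, a ball of radius proportional to $r$ inside $\Omega\cap B_r(x)$; indeed an interior point $x\in\Omega$ either satisfies $\sfd(x,\partial\Omega)\ge r/2$ (and then $B_{r/2}(x)\subset\Omega\cap B_r(x)$) or lies within $r/2$ of some $x'\in\partial\Omega$, in which case the corkscrew ball at $x'$ with radius $r/2$ sits inside $\Omega\cap B_{r/2}(x')\subset \Omega\cap B_r(x)$. The strategy for $x\in\partial\Omega$ is to follow a uniform curve from an interior approximant $x_n\to x$ to a point $y_0\in\Omega$ lying at distance of order $r$ from $x$, and apply the twisted double-cone condition at a short arc-length parameter.

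Since $\diam(\Omega)\ge r$, select $a,b\in\Omega$ with $\sfd(a,b)>3r/4$, so that $\max(\sfd(x,a),\sfd(x,b))>3r/8$. The continuous map $\sfd(x,\cdot)$ sends the connected set $\Omega$ (uniform domains are connected, as noted after Definition \ref{def: unif dom}) onto an interval of $[0,+\infty)$; this interval accumulates at $0$ because $x\in\overline\Omega$, and we have just exhibited a value exceeding $3r/8$ in it, hence it contains $r/4$. Fix $y_0\in\Omega$ with $\sfd(x,y_0)=r/4$ and pick $x_n\in\Omega$ with $\sfd(x_n,x)<r/20$. By Definition \ref{def: unif dom}, reparametrised by arclength, we obtain curves $\gamma_n\colon[0,L_n]\to\Omega$ joining $x_n$ and $y_0$ and satisfying the twisted double-cone bound, with
\[
\tfrac{r}{5}\,\le\, \sfd(x_n,y_0)\,\le\, L_n\,\le\, C\,\sfd(x_n,y_0)\,\le\, \tfrac{3Cr}{10}.
\]

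Set $s:=r/(10C)$ and $p_n:=\gamma_n(s)$. Since $C\ge 1$ one has $s\le r/10\le L_n-s$, so the twisted cone condition applied at arc-parameter $s$ yields
\[
\sfd(p_n,\partial\Omega)\,\ge\, C^{-1}\min(s,L_n-s)\,=\, C^{-1}s\,=\, \frac{r}{10C^2}.
\]
Moreover $\sfd(p_n,x)\le s+\sfd(x_n,x)\le r/(10C)+r/20\le 3r/20$, so $B_{r/(10C^2)}(p_n)\subset B_r(x)$; combining with the boundary-distance estimate gives $B_{r/(10C^2)}(p_n)\subset \Omega\cap B_r(x)$, yielding the corkscrew property with $\eps:=1/(20C^2)$ (the factor $1/2$ absorbs the reduction from $\overline\Omega$ to $\partial\Omega$). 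The only delicate point is the calibration of $s$: it must be small enough that $\min(s,L_n-s)=s$ in the twisted cone bound, yet the resulting $p_n$ must remain well inside $B_r(x)$; this is made possible by the $n$-uniform lower bound $L_n\ge r/5$ stemming from the choice of $y_0$ at distance $r/4$ from $x$.
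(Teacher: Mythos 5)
The paper itself gives no argument for this lemma (it just cites \cite[Lemma 4.2]{BjSh07}), so you are supplying a self-contained proof, and its quantitative skeleton is the standard one: approximate $x\in\partial\Omega$ from inside, pick a target point $y_0$ with $\sfd(x,y_0)=r/4$ (your connectedness argument for its existence is fine), run the uniform curve from $x_n$ to $y_0$, and stop at arclength $s=r/(10C)$. The arithmetic checks out: $r/5\le L_n\le 3Cr/10$, hence $\min(s,L_n-s)=s$, $\sfd(p_n,\partial\Omega)\ge r/(10C^2)$, and $B_{r/(10C^2)}(p_n)\subset B_r(x)$.

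The genuine gap is the very last inference: from $\sfd(p_n,\partial\Omega)\ge r/(10C^2)$ you conclude $B_{r/(10C^2)}(p_n)\subset\Omega$. This needs $\sfd(p,\X\setminus\Omega)\ge\sfd(p,\partial\Omega)$ for $p\in\Omega$, whereas in a general metric space only the reverse inequality holds: exterior points can be metrically close to $p$ without any boundary point being close. Since Definition \ref{def: unif dom} is phrased with the distance to the topological boundary in an arbitrary ambient space, this step is not justified as written, and indeed no corkscrew constant depending only on $C$ can exist at that level of generality: let $\X\subset\rr^2$ be the union of the segments $[0,1]\times\{0\}$ and $[0,1]\times\{\delta\}$ (plus a far-away arc joining their right endpoints), with the induced Euclidean distance, and $\Omega=[0,1)\times\{0\}$. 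Then $\partial\Omega=\{(1,0)\}$, the uniformity condition holds with any $C>1$ uniformly in $\delta$, yet every metric ball contained in $\Omega$ has radius at most $\delta$, so your $\eps=1/(20C^2)$ fails as soon as $\delta$ is small; replacing the second segment by a ceiling approaching $(1,0)$ tangentially, e.g.\ of height $(1-t)^2$, even destroys the corkscrew property altogether while keeping $\Omega$ uniform in the boundary-distance sense. (The same conflation appears in your reduction for interior points, where you pass from $\sfd(x,\partial\Omega)\ge r/2$ to $B_{r/2}(x)\subset\Omega$, although there you could simply invoke the paper's remark after Definition \ref{def:cork}.) The gap disappears, and your proof becomes essentially the standard argument of \cite{BjSh07}, once $\sfd(p,\partial\Omega)=\sfd(p,\X\setminus\Omega)$ for $p\in\Omega$: this holds in length spaces (any curve from $p$ to the complement meets $\partial\Omega$), hence, up to constants, in the quasiconvex or geodesic ambient spaces (PI and $\rcd$ spaces) where the paper actually uses the lemma; alternatively, state the uniformity condition with $\sfd(\gamma(t),\X\setminus\Omega)$, as in the cited reference, and then your final step is immediate. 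You should add one of these hypotheses or make that substitution explicit.
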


\begin{lemma}\label{lem:zero boundary}
    Let $\Xdm$ be a uniformly locally doubling metric measure space and let $\Omega\subset \X$ satisfy the corkscrew-condition. Then $\mea(\partial \Omega)=0.$ In particular this holds if $\Omega\subset \X$ is a uniform domain.
\end{lemma}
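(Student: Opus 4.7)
The strategy is to combine the corkscrew condition with the Lebesgue differentiation theorem (which holds in uniformly locally doubling spaces) to get a contradiction between density bounds at points of $\partial\Omega$.

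The plan is as follows. First I would observe that, since $\Omega$ is open, $\Omega^c$ is closed, hence $\partial\Omega=\overline{\Omega}\cap\overline{\Omega^c}\subset\Omega^c$. Next, for an arbitrary $x\in\partial\Omega$ and $r\in(0,\diam(\Omega)]$, the corkscrew condition yields $y\in\Omega$ with $B_{\eps r}(y)\subset\Omega\cap B_r(x)$. Since $y\in B_r(x)$ we have $B_r(x)\subset B_{2r}(y)$, and by iterating the uniform local doubling inequality a bounded number of times (depending only on $\eps$ and $\diam(\Omega)$, since $r\le\diam(\Omega)$), we can compare $\mea(B_{2r}(y))$ with $\mea(B_{\eps r}(y))$. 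This gives a constant $c=c(\eps,C_D,\diam(\Omega))>0$ such that
\[
\mea(\Omega\cap B_r(x))\ge \mea(B_{\eps r}(y))\ge c\,\mea(B_r(x)), \qquad \forall\,x\in\partial\Omega,\ \forall\,r\in(0,\diam(\Omega)].
\]
In particular, $\underline D(\Omega,x)\ge c>0$ for every $x\in\partial\Omega$.

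On the other hand, as $\mea$ is uniformly locally doubling, the Lebesgue differentiation theorem applies to the Borel set $\Omega^c$: for $\mea$-a.e.\ $x\in\Omega^c$ one has $\underline D(\Omega^c,x)=1$, equivalently $\overline D(\Omega,x)=0$. Since $\partial\Omega\subset\Omega^c$, the set of $x\in\partial\Omega$ on which $\overline D(\Omega,x)>0$ is $\mea$-negligible. Combining this with the lower bound $\underline D(\Omega,x)\ge c>0$ valid at \emph{every} $x\in\partial\Omega$, we must have $\mea(\partial\Omega)=0$. The final assertion of the lemma then follows from Lemma \ref{lem:cork unif}, which says that every uniform domain satisfies the corkscrew condition.

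There is essentially no serious obstacle here: the only point requiring a bit of care is that the doubling constant relating $\mea(B_r(x))$ and $\mea(B_{\eps r}(y))$ must be chosen uniformly in $r$, which is why the boundedness of $\Omega$ (hence of $\diam(\Omega)$) and the \emph{uniform local} doubling hypothesis are both needed. Everything else is a direct application of the covering/differentiation theory available in uniformly locally doubling metric measure spaces.
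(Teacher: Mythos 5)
Your proof is correct and follows essentially the same route as the paper's: the corkscrew condition combined with the uniform local doubling property gives the uniform lower bound $\mea(B_r(x)\cap\Omega)\ge c\,\mea(B_r(x))$ at every $x\in\partial\Omega$, and the Lebesgue differentiation theorem for locally doubling measures then forces $\mea(\partial\Omega)=0$. The only inessential difference is that you apply the differentiation theorem to $\Omega^c$ (getting $\overline D(\Omega,x)=0$ at a.e.\ point of $\partial\Omega$), whereas the paper applies it directly to $\partial\Omega$, noting that no boundary point can be a point of density one for $\partial\Omega$.
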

\begin{proof}
    Thanks to the  corkscrew-condition there exists a constant $\eps>0$ such that for every $x \in \partial \Omega$ and $r>0$ there exists $B_{\eps r}(y)\subset B_r(x)\cap \Omega$. Then by the uniformly locally doubling assumption we have 
\begin{equation}\label{eq:cork bound}
    \mea(B_r(x)\cap \Omega)\ge \mea(B_{\eps r}(y))\ge C_\eps \mea(B_{2r}(y))\ge  C_\eps\mea(B_r(x)), \quad \forall r\in(0,1),
\end{equation}
where $C_\eps>0$ is a constant depending only on $\eps$. Therefore no point of $x \in \partial \Omega$ can be a one-density point for $\partial \Omega$. The conclusion follows by the Lebesgue differentiation theorem for locally doubling metric measure spaces (see e.g. \cite[Section 3.4]{HKST}). The result applies to uniform domains, since by Lemma \ref{lem:cork unif} they satisfy they corkscrew-condition.
\end{proof}
The following lemma gives a lower bound on the measure of balls in domain satisfying the corkscrew-condition.
\begin{lemma}\label{lem:dim doubling}
Fix $s>0$. Let $\Xdm$ be a metric measure space such that for every $R_0>0$ there exists a constant $c_0>0$ such that
\begin{equation} \label{eq:dim doubling}
     \frac{\mm(B_r(x))}{\mm(B_R(x))}\ge c_0\left(\frac{r}{R}\right)^s, \quad \forall x \in \X, \,\, \forall 0<r<R\le R_0.
\end{equation}
Then for every bounded domain $\Omega\subset \X$ satisfying the corkscrew-condition  there exists a constant $C>0$  such that
\begin{equation}\label{eq:cork doubling}
    \frac{\mm(B_r(x)\cap \Omega)}{\mm(B_R(x)\cap \Omega)}\ge C\left(\frac{r}{R}\right)^s, \quad \forall x \in \overline \Omega, \,\, \forall 0<r<R.
\end{equation}
In particular this holds if $\Omega\subset \X$ is a uniform domain.
\end{lemma}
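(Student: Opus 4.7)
The strategy is to inscribe a ball of the ambient space into $B_r(x)\cap\Omega$ via the corkscrew condition, then compare it with an ambient ball containing $B_R(x)\cap\Omega$, extracting the factor $(r/R)^s$ from the dimensional lower bound \eqref{eq:dim doubling}. The final clause follows immediately from Lemma \ref{lem:cork unif}, so only \eqref{eq:cork doubling} needs a proof.

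\textbf{Step 1: reduce to $r\le R\le 2\diam(\Omega)$.} For $x\in\overline\Omega$ and any $\rho>\diam(\Omega)$, one has $\Omega\subset B_\rho(x)$ and hence $B_\rho(x)\cap\Omega=\Omega$. So if $r\ge\diam(\Omega)$ both numerator and denominator equal $\mm(\Omega)$ and the ratio is $1$, which trivially dominates $(r/R)^s$. Thus I may assume $r\le\diam(\Omega)$. If additionally $R\le 2\diam(\Omega)$, I am in the main case below; if $R>2\diam(\Omega)$, then at the end the extra factor $(R/2\diam(\Omega))^s>1$ will only help the estimate.

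\textbf{Step 2: corkscrew and triangle inequality.} Let $\eps>0$ be the corkscrew constant of $\Omega$. Since $r\le\diam(\Omega)$, there exists $y\in\Omega$ with $B_{\eps r}(y)\subset B_r(x)\cap\Omega$, so
\[
\mm(B_r(x)\cap\Omega)\ge \mm(B_{\eps r}(y)).
\]
On the other hand $d(x,y)<r\le R$, whence $B_R(x)\subset B_{2R}(y)$, giving
\[
\mm(B_R(x)\cap\Omega)\le \mm(B_{2R}(y)).
\]

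\textbf{Step 3: apply \eqref{eq:dim doubling}.} Fix the reference scale $R_0:=2\diam(\Omega)$ and let $c_0>0$ be the corresponding constant from the hypothesis. In the main case $\eps r<2R\le R_0$, so \eqref{eq:dim doubling} applied to the two balls centered at $y$ yields
\[
\frac{\mm(B_{\eps r}(y))}{\mm(B_{2R}(y))}\ge c_0\Bigl(\frac{\eps r}{2R}\Bigr)^s=c_0\Bigl(\frac{\eps}{2}\Bigr)^s\Bigl(\frac{r}{R}\Bigr)^s.
\]
Combining with Step 2 gives \eqref{eq:cork doubling} with $C:=c_0(\eps/2)^s$.

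\textbf{Step 4: the remaining case $R>2\diam(\Omega)$.} Here $B_R(x)\cap\Omega=\Omega\subset B_{2\diam(\Omega)}(y)$, so $\mm(B_R(x)\cap\Omega)\le\mm(B_{2\diam(\Omega)}(y))$, and applying \eqref{eq:dim doubling} at scale $R_0=2\diam(\Omega)$ with radii $\eps r<2\diam(\Omega)$ gives
\[
\frac{\mm(B_r(x)\cap\Omega)}{\mm(B_R(x)\cap\Omega)}\ge c_0\Bigl(\frac{\eps r}{2\diam(\Omega)}\Bigr)^s\ge c_0\Bigl(\frac{\eps}{2}\Bigr)^s\Bigl(\frac{r}{R}\Bigr)^s,
\]
where in the last step I used $R>2\diam(\Omega)$. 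This completes the proof with the same constant $C=c_0(\eps/2)^s$.

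There is no serious obstacle: the argument is a routine comparison between ambient and relative balls, and the only mildly delicate point is keeping track of the case $R>\diam(\Omega)$, where the corkscrew argument would fail at scale $R$ but is not needed because the denominator saturates at $\mm(\Omega)$.
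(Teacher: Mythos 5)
Your argument is correct and follows essentially the same route as the paper's proof: inscribe the corkscrew ball $B_{\eps r}(y)\subset B_r(x)\cap\Omega$ and compare it, via the volume lower bound \eqref{eq:dim doubling}, with a large ambient ball containing $B_R(x)\cap\Omega$. The only real difference is cosmetic: the paper first shows $\mm(B_r(x)\cap\Omega)\ge \tilde C\,\mm(B_r(x))$ (corkscrew plus doubling) and then applies \eqref{eq:dim doubling} at the center $x$, whereas you apply \eqref{eq:dim doubling} once at the corkscrew center $y$; the two are equivalent. One bookkeeping slip: in Step 3 you assert $2R\le R_0$ with $R_0=2\diam(\Omega)$, but your main case allows $R$ up to $2\diam(\Omega)$, so $2R$ can reach $4\diam(\Omega)$. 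This is fixed immediately by taking $R_0=4\diam(\Omega)$ (admissible, since the hypothesis supplies a constant for every choice of $R_0$, and a constant valid up to a larger scale works at smaller scales), or by splitting the main case at $R\le\diam(\Omega)$. With that adjustment every step checks out; in fact your case analysis is slightly more complete than the paper's, which treats $0<r<R\le\diam(\Omega)$ and $\diam(\Omega)<r<R$ explicitly but leaves the intermediate regime $r\le\diam(\Omega)<R$ to the reader.
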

\begin{proof}
Taking $R=2r$ in \eqref{eq:dim doubling} shows  that $\Xdm$ is uniformly locally doubling.  Then as in \eqref{eq:cork bound} we have the existence of a constant $\tilde C$ such that for every $x\in \overline \Omega$
    \begin{align*}
     &\mea(B_r(x)\cap \Omega)\ge \tilde C\mea(B_r(x)) \overset{\eqref{eq:dim doubling}}{ \ge} c_0\tilde C\mea(B_R(x)) \left(\frac{r}{R}\right)^s\\
&\ge  c_0\tilde C\mea(B_R(x)\cap \Omega) \left(\frac{r}{R}\right)^s,\quad \forall 0<r< R\le  \diam(\Omega).
    \end{align*}
    This proves \eqref{eq:cork doubling} for $0<r<R\le \diam(\Omega)$. 
     If instead $r> \diam(\Omega)$ we have
     \[
      \frac{\mm(B_r(x))\cap \Omega}{\mm(B_R(x)\cap \Omega)}= \frac{\mm(\Omega)}{\mm(\Omega)} =1\ge \frac{r^s}{R^s}, \quad \forall x \in \overline \Omega, \quad \forall R\ge r.
     \]
     This shows \eqref{eq:cork doubling} also for $\diam(\Omega)<r<R$ and concludes the proof.
\end{proof}

The next step is to show that the Sobolev space on a uniform domain coincides with the Sobolev space on its closure. In what follows, given a m.m.\ space $\Xdm$ and an open subset $\Omega \subset \X$, we denote by $(\overline \Omega,\sfd\restr {\overline \Omega }, \mmomega)$ the m.m.\ space obtained by endowing $\overline \Omega$ with the restriction distance $\sfdomega \coloneqq \sfd\restr{\overline \Omega \times \overline \Omega}$ and measure $\mmomega$, obtained by restricting $\mm$ to the induced Borel $\sigma$-algebra on $\overline \Omega.$ Note that by definition $(\overline \Omega,\sfd\restr {\overline \Omega }, \mmomega)$ is a complete and separable metric measure space with $\supp(\mmomega)=\overline \Omega.$ Moreover for a function $u \in L^2(\overline \Omega,\mmomega) $ we will denote by $u\restr\Omega\in L^2(\Omega,\mm)$ the function which agrees $\mm$-a.e.\ with $u$ in $\Omega.$

\begin{theorem}[Equivalence between $\W(\overline \Omega,\sfd\restr {\overline \Omega }, \mmomega)$ and $\W(\Omega)$]\label{thm:sobolev compatibility}
     Let $\Xdm$ be a PI space and $\Omega\subset \X$ be a uniform domain. Then $\W(\overline \Omega,\sfdomega, \mmomega)=\W(\Omega)$ as function spaces. More precisely, for every $u \in \W(\overline \Omega,\sfd\restr {\overline \Omega }, \mmomega)$ it holds that $\Phi(u)\coloneqq u\restr \Omega \in \W(\Omega)$  and the map
      $\Phi: \W(\overline \Omega,\sfd\restr {\overline \Omega }, \mmomega)\to \W(\Omega)$ is a surjective isometry. Moreover if $\Xdm$ is infinitesimally Hilbertian, so is $(\overline \Omega,\sfd\restr {\overline \Omega }, \mmomega)$  and 
      \begin{equation}\label{eq:scalar compatibility}
          \int_{\overline \Omega} \nabla u\cdot \nabla v \,\d \mm= \int_{\Omega} \nabla (u\restr\Omega) \cdot \nabla (v\restr\Omega) \,\d \mm,\quad \forall u,v \in  \W(\overline \Omega,\sfd\restr {\overline \Omega }, \mmomega).
      \end{equation}
\end{theorem}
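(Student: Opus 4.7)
The plan is to identify both Sobolev spaces with their Newtonian counterparts and then establish the isometry via a Lipschitz-approximation/extension argument. First, by Lemma \ref{lem:cork unif} every uniform domain satisfies the corkscrew condition, so Lemma \ref{lem:zero boundary} yields $\mea(\partial\Omega)=0$. Consequently the restriction map $\Phi(u):=u\restr\Omega$ is a bijective linear isometry between $L^2(\overline\Omega,\mmomega)$ and $L^2(\Omega,\mm\restr\Omega)$. Invoking Remark \ref{eq:equivalence} applied to both the m.m.\ spaces $\Xdm$ and $(\overline\Omega,\sfdomega,\mmomega)$ (the latter is complete, separable, with $\supp(\mmomega)=\overline\Omega$), the spaces $\W(\Omega)$ and $\W(\overline\Omega,\sfdomega,\mmomega)$ are isometric to the Newtonian spaces $N^{1,2}(\Omega,\sfd,\mm\restr\Omega)$ and $N^{1,2}(\overline\Omega,\sfdomega,\mmomega)$ respectively, with the same minimal weak upper gradients.

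For the forward direction, since $\sfdomega$ is the restriction of $\sfd$, any rectifiable curve in $\Omega$ is a rectifiable curve in $\overline\Omega$ of identical length. Thus every weak upper gradient of $u\in N^{1,2}(\overline\Omega,\sfdomega,\mmomega)$ restricts to a weak upper gradient of $u\restr\Omega\in N^{1,2}(\Omega)$, which shows that $\Phi$ maps $\W(\overline\Omega,\sfdomega,\mmomega)$ into $\W(\Omega)$ and that $\|\Phi(u)\|_{\W(\Omega)}\le \|u\|_{\W(\overline\Omega,\sfdomega,\mmomega)}$.

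For surjectivity and the reverse inequality, the key observation is that a Lipschitz function on $(\Omega,\sfd\restr{\Omega})$ admits a unique Lipschitz extension to $(\overline\Omega,\sfdomega)$ with the same Lipschitz constant, since $\Omega$ is $\sfdomega$-dense in $\overline\Omega$; moreover, because $\sfdomega$ coincides with $\sfd$ locally at each point of $\Omega$ (open in $\X$), the slopes of such an extension computed in $(\overline\Omega,\sfdomega)$ and in $(\Omega,\sfd\restr{\Omega})$ agree $\mea$-a.e.\ on $\Omega$. Given $u\in\W(\Omega)$, via cutoffs $\eta_n\in\LIP_{bs}(\Omega)$ I would reduce to $u\eta_n\in\W(\X)$, approximate these in $\W(\X)$ by globally Lipschitz functions (standard Lipschitz density in PI spaces, combined with Theorem \ref{thm:lip pi}), and after a further cutoff obtain globally Lipschitz approximants $u_n$ of $u$ in $\W(\Omega)$ with support contained in $\Omega$. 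Using the slope identification above and $\mmomega(\partial\Omega)=0$, the Lipschitz extensions $\tilde u_n$ to $\overline\Omega$ satisfy $\|\tilde u_n\|_{\W(\overline\Omega,\sfdomega,\mmomega)}=\|u_n\|_{\W(\Omega)}$ (equality follows from the forward direction giving one inequality and the slope identification giving the other). Hence $(\tilde u_n)$ is Cauchy in $\W(\overline\Omega,\sfdomega,\mmomega)$, and its limit $\tilde u$ satisfies $\Phi(\tilde u)=u$ and $\|\tilde u\|_{\W(\overline\Omega,\sfdomega,\mmomega)}=\|u\|_{\W(\Omega)}$, proving that $\Phi$ is a surjective linear isometry.

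The infinitesimal Hilbertianity of $(\overline\Omega,\sfdomega,\mmomega)$ and the scalar product identity \eqref{eq:scalar compatibility} are then routine consequences: the parallelogram identity on $\W(\Omega)$ (Hilbert by infinitesimal Hilbertianity of $\Xdm$, see \cite[Remark A.3]{CaRo22}) transfers through the linear isometry $\Phi$ to $\W(\overline\Omega,\sfdomega,\mmomega)$, and \eqref{eq:scalar compatibility} then follows by polarization from the definition \eqref{eq:def scalar}. The main technical obstacle is the cutoff/truncation procedure producing globally Lipschitz approximants of $u\in\W(\Omega)$ with support in $\Omega$ whose $\W(\Omega)$-norms converge to $\|u\|_{\W(\Omega)}$: the space $\LIP_{bs}(\Omega)$ is not generally dense in $\W(\Omega)$, so one has to extract the approximation by first passing to cutoffs $u\eta_n\in\W(\X)$, exploiting the Lipschitz density in $\W(\X)$ guaranteed by the PI-space assumption, and then pushing the approximation back through the localization defining $\W(\Omega)$.
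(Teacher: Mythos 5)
Your forward direction (restriction via Newtonian spaces, using $\mea(\partial\Omega)=0$) and the final Hilbertianity/polarization step are fine and consistent with the paper. The genuine gap is in the surjectivity argument: you never use the defining curve condition of uniform domains beyond the corkscrew consequence $\mea(\partial\Omega)=0$, and the approximation scheme you propose cannot work. Multiplying by cutoffs $\eta_n\in\LIP_{bs}(\Omega)$ does \emph{not} give $u\eta_n\to u$ in $\W(\Omega)$: by the Leibniz rule the error contains the term $|u|\,|D\eta_n|$, which is concentrated near $\partial\Omega$ and is not controlled for a general $u\in\W(\Omega)$. In fact, producing Lipschitz approximants of $u$ in the $\W(\Omega)$-norm \emph{with support contained in $\Omega$}, as you require in order to get the norm identity $\|\tilde u_n\|_{\W(\overline\Omega,\sfdomega,\mmomega)}=\|u_n\|_{\W(\Omega)}$, would mean exactly that $T(\W_0(\Omega))$ is dense in (hence equal to) $\W(\Omega)$; this is false already for a Euclidean ball (it would identify the Neumann and Dirichlet problems), and you acknowledge the non-density but the proposed ``push the approximation back through the localization'' does not circumvent it. A second way to see the gap: your argument only uses that $\Xdm$ is PI and $\mea(\partial\Omega)=0$, but with only these hypotheses the conclusion is false --- for a slit disk $\Omega=B_1(0)\setminus([0,1)\times\{0\})\subset\rr^2$ one has $\overline\Omega=\overline{B_1(0)}$, and $\W(\Omega)$ contains functions jumping across the slit which are not restrictions of elements of $\W(\overline\Omega,\sfdomega,\mmomega)$, so $\Phi$ is not surjective. (The slit disk is of course not uniform, which is precisely the point: uniformity must enter the surjectivity proof through more than a measure-zero boundary.)

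The missing ingredient, which is the paper's key step, is the Sobolev \emph{extension} property of uniform domains (\cite[Prop.\ 5.9]{BjSh07}): given $u\in\W(\Omega)$ one first extends it to $\tilde u\in\W(\X)$, then approximates $\tilde u$ by globally Lipschitz functions $u_n\in\LIP_{bs}(\X)$ (Lipschitz density in $\W(\X)$), restricts these to $\overline\Omega$ --- where $\lip_{\overline\Omega}(u_n\restr{\overline\Omega})\le\lip(u_n)$ pointwise --- and uses the uniform energy bound together with the relaxation definition of the Cheeger energy to conclude $\tilde u\restr{\overline\Omega}\in\W(\overline\Omega,\sfdomega,\mmomega)$ with $\Phi(\tilde u\restr{\overline\Omega})=u$. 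If you insert this extension step in place of your cutoff construction, the rest of your outline (norm identification via the slope comparison on $\Omega$, $\mea(\partial\Omega)=0$, and polarization for \eqref{eq:scalar compatibility}) goes through essentially as in the paper.
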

\begin{proof}
The  result is well known, but for convenience of the reader we include a short argument which is  a combination of results already present in literature.  Fix $u \in\W(\overline \Omega,\sfd\restr {\overline \Omega }, \mmomega) $. The fact that $\Phi(u)=u\restr \Omega \in \W(\Omega)$ is checked e.g.\ in  \cite[Remark 2.15]{AH18}. Moreover by \cite[Prop.\ 6.4]{AMS16}   for every $\eta \in \LIP_{bs}(\Omega)$, since $\sfd(\supp(\eta u), \X\setminus \Omega)>0,$ it holds that $\eta u \in \W(\X)$ (when extended by zero in the whole $\X$) and also that $|D (\eta u)|_\X=|D(\eta u)|_{\overline\Omega}$ $\mea$-a.e.\ in $\Omega$, where $|D(\cdot)|_{\overline\Omega}$ is the w.u.g.\ in $\W(\overline \Omega,\sfdomega, \mmomega)$ and $|D(\cdot)|_{\X}$ is the w.u.g.\ in $\W(\X)$. Hence by the arbitrariness of $\eta$ and by  locality we deduce that $|D u|_{\overline\Omega}=|D(u\restr \Omega)|_{ \Omega}$ $\mea$-a.e.\ in $\Omega$, where  $|D(\cdot)|_{\Omega}$ is the w.u.g.\ in $\W(\Omega)$ (as defined in \eqref{eq:wug local}). This together with the fact that $\mea(\partial \Omega)=0$ (recall  Lemma \ref{lem:zero boundary}) shows  that $\Phi(u)$ preserves the norm.
It remains to show that $\Phi$ is surjective (note that everything we said up to now holds for an arbitrary open set $\Omega$ with $\mea(\partial \Omega)$=0).
Now since $\Omega$ is a uniform domain, by \cite[Prop. 5.9]{BjSh07}, $\Omega$ is an extension domain and in particular for every $u\in \W(\Omega)$ there exists $\tilde u \in \W(\X)$ such that $\tilde u\restr \Omega=u$. By density of Lipschitz functions (see \cite{Shanmugalingam00} or \cite[Theorem 5.1]{BB13}) there exists a sequence $u_n \in \LIP_{bs}(\X)$  such that $u_n \to \tilde u$ in $\W(\X).$ In particular it holds $u_n\restr \Omega \to  u $ in $L^2(\overline\Omega;\mmomega)$. Moreover $u_n \restr{\overline\Omega}\in \LIP(\overline\Omega,\sfd\restr{\overline{\Omega}})$ and by definition of slope  we have $\lip_{\overline\Omega}(u_n\restr{\overline\Omega})(x)\le \lip (u_n)(x)$ for every $x \in \overline \Omega$, where $\lip_{\overline\Omega}$ denotes the slope with respect to the metric space $(\overline\Omega,\sfdomega).$ Therefore 
$\sup_n \int_{\overline \Omega} |\lip_{\overline\Omega }(u_n\restr{\overline\Omega})|^2\d  \mm \le \sup_n \int_{\X} |\lip (u_n)|^2  \d \mm<+\infty$, which by definition proves that $\tilde u\restr{\overline\Omega} \in\W(\overline \Omega,\sfdomega, \mmomega). $ Since $\tilde u\restr\Omega =u$ and  by the arbitrariness of $u\in \W(\Omega)$ this shows that $\Phi$ is surjective. Finally if $\Xdm$ is infinitesimally Hilbertian then $\W(\Omega)$ is a Hilbert space and since $\Phi$ is an isometry, then $\W(\overline \Omega,\sfdomega, \mmomega)$ is a Hilbert space as well and so $(\overline \Omega,\sfdomega, \mmomega)$ is infinitesimally Hilbertian. Then  identity \eqref{eq:scalar compatibility} follows by polarization using the definition of scalar product between gradients.
\end{proof}

\begin{theorem}\label{thm:uniform implies PI}
   Let $\Xdm$ be an infinitesimally Hilbertian PI space and let $\Omega\subset \X$ be a uniform domain. Then the metric measure space $(\overline \Omega,\sfd\restr {\overline \Omega }, \mmomega)$ is an infinitesimally Hilbertian PI space.
\end{theorem}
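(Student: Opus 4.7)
The plan is to verify the three defining properties of an infinitesimally Hilbertian PI space for $(\overline\Omega, \sfdomega, \mmomega)$. Infinitesimal Hilbertianity is already contained in the last assertion of Theorem \ref{thm:sobolev compatibility}, so only the uniform local doubling property and the weak local $(1,1)$-Poincaré inequality require a proof.

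For doubling, I would use the corkscrew condition from Lemma \ref{lem:cork unif}: there is $\eps > 0$ such that for every $x \in \overline\Omega$ and every $0 < r \le \diam(\Omega)$, the set $\Omega \cap B_r(x)$ contains a ball $B_{\eps r}(y)$. Iterating the ambient doubling inequality from $B_{\eps r}(y)$ up to $B_{2r}(y) \supset B_r(x)$ yields a constant $c_{R_0}$ such that $\mm(B_{\eps r}(y)) \ge c_{R_0} \mm(B_r(x))$ for $r \le R_0$. Since $\mm(\partial \Omega) = 0$ by Lemma \ref{lem:zero boundary}, the $\mmomega$-measure of a ball in $\overline\Omega$ equals the $\mm$-measure of its intersection with $\Omega$, which gives $\mmomega(B_r(x)\cap\overline\Omega) \ge c_{R_0} \mm(B_r(x))$. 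Combined with the obvious bound $\mmomega(B_{2r}(x)\cap\overline\Omega) \le \mm(B_{2r}(x)) \le C_D(2R_0) \mm(B_r(x))$, this yields the required uniform local doubling on $\overline\Omega$ at all scales $r \le R_0$; for $r > \diam(\Omega)$ the inequality is trivial since both balls coincide with $\overline\Omega$.

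The weak local $(1,1)$-Poincaré inequality is the genuine content of the statement and the step that truly uses the uniformity of $\Omega$; I expect it to be the main obstacle. My plan is to leverage the Sobolev extension property of uniform domains in PI spaces via \cite[Prop.\ 5.9]{BjSh07} (the same tool already invoked in the proof of Theorem \ref{thm:sobolev compatibility}): given $f \in \LIP_{\loc}(\overline\Omega)$ and a ball $B_r(x)$ with $x \in \overline\Omega$, I would extend $f$ to a Sobolev function $\tilde f$ on $\X$ whose weak upper gradient is controlled, near $\overline\Omega$, by $\lip_{\overline\Omega}(f)$, apply the ambient $(1,1)$-Poincaré inequality to $\tilde f$ on the dilated ball $B_{\lambda r}(x)$, and then convert averages on ambient balls into averages on balls in $\overline\Omega$ through the measure comparison $\mmomega(B_{\lambda r}(x) \cap \overline\Omega) \gtrsim \mm(B_{\lambda r}(x))$ established in the doubling step. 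The delicate point is producing an extension whose upper gradient is controlled by the intrinsic slope on $\overline\Omega$ rather than by some larger ambient quantity; this is precisely what the twisted-cone geometry of uniform domains affords, allowing the extension at each exterior point to be defined by reflection along an interior curve of comparable length. An equivalent and conceptually cleaner alternative is a direct Boman-type chaining argument: two points of $\overline\Omega \cap B_r(x)$ can be joined by a chain of balls lying inside $\Omega$ with bounded overlap and total measure controlled by the uniformity constant, and summing oscillations along the chain via the ambient Poincaré inequality on each link yields the conclusion. Either route produces the required weak $(1,1)$-Poincaré inequality on $(\overline\Omega, \sfdomega, \mmomega)$ with constants depending only on the ambient PI data and the uniformity constant of $\Omega$, and together with doubling and Hilbertianity completes the proof.
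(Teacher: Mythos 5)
Your overall structure matches the facts: infinitesimal Hilbertianity is indeed obtained exactly as you say, from Theorem \ref{thm:sobolev compatibility}, and your doubling argument via the corkscrew condition (Lemma \ref{lem:cork unif}) together with $\mm(\partial\Omega)=0$ (Lemma \ref{lem:zero boundary}) is correct and complete. The paper, however, does not reprove the metric part at all: it obtains the statement that $(\overline\Omega,\sfdomega,\mmomega)$ is a PI space by citing \cite[Theorem 4.4]{BjSh07} (see also \cite[Prop.\ 7.1]{AikShan05} and \cite[Theorem A.21]{BB13}), so both doubling and the $(1,1)$-Poincar\'e inequality are imported from the literature. Your second route for the Poincar\'e inequality, the Boman-type chaining argument using the twisted-cone curves of the uniform domain, is precisely the strategy behind those cited results, so if carried out it would give a self-contained proof; but as written it is only a sketch, and this is the genuinely hard step of the theorem, so the proposal as it stands does not yet constitute a proof of it.

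The first route you propose has a concrete flaw beyond being a sketch: \cite[Prop.\ 5.9]{BjSh07} provides a \emph{global} bounded extension operator at the level of $N^{1,2}$ (i.e.\ $L^2$-control of the extension and of its upper gradient on all of $\X$), whereas the weak $(1,1)$-Poincar\'e inequality requires, for each ball $B_r(x)$ with $x\in\overline\Omega$, an $L^1$-bound of the gradient of the extension on $B_{\lambda r}(x)$ in terms of the intrinsic slope of $f$ on a comparably sized ball of $\overline\Omega$. A global $L^2$-extension theorem neither localizes to balls in a scale-invariant way nor gives $L^1$-control, so it cannot be fed into the ambient Poincar\'e inequality as you describe; one would need a local Whitney-type extension with quantitative $L^1$ gradient bounds on dilated balls, which is essentially equivalent in difficulty to running the chaining argument directly. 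In short: keep the Hilbertianity and doubling parts, discard the extension route, and either carry out the chaining argument in full or, as the paper does, invoke \cite[Theorem 4.4]{BjSh07} for the PI property of $(\overline\Omega,\sfdomega,\mmomega)$.
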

\begin{proof}
    The fact that $(\overline \Omega,\sfd\restr {\overline \Omega }, \mmomega)$ is a PI space is proved in  \cite[Theorem 4.4]{BjSh07} (see also \cite[Prop. 7.1]{AikShan05} and \cite[Theorem A.21]{BB13}), recalling also that $\mea(\partial \Omega)=0$. The infinitesimal Hilbertianity follows   from Theorem \ref{thm:sobolev compatibility} (alternatively we could apply \cite[Prop. 4.22]{Gigli12}).
\end{proof}

From the previous results about the compatibility of the two  Sobolev spaces   $\W(\Omega)$ and $\W(\overline \Omega,\sfd\restr {\overline \Omega }, \mmomega)$, we can deduce a compatibility between eigenfunctions and eigenvalues in $\Omega$ and in $(\overline \Omega,\sfd\restr {\overline \Omega }, \mmomega)$.
\begin{cor}[Equivalence between $\lambda_k^\cN(\Omega)$ and $\lambda_k(\overline \Omega,\sfdomega, \mmomega)$]\label{cor:spectrum compatilbity}
  Let $\Xdm$ be an infinitesimally Hilbertian PI space and let $\Omega\subset \X$ be a uniform domain. Then:
  \begin{enumerate}[label=\roman*)]
    \item $u \in \W(\Omega)$ is an eigenfunction for the Neumann Laplacian in $\Omega$ of eigenvalue $\lambda $ if and only  there exists an eigenfunction  $\tilde u\in \W(\overline \Omega,\sfd\restr {\overline \Omega }, \mmomega)$ of the Laplacian in the metric measure space $(\overline \Omega,\sfdomega, \mmomega)$ of eigenvalue $\lambda$ and satisfying  $\tilde u\restr\Omega =u$. In particular, if this is the case, then $u$ has a  H\"older continuous representative in $\Omega.$
      \item  
  the embedding $\W(\Omega)\hookrightarrow L^2(\Omega)$ is compact and in particular the Neumann Laplacian in $\Omega$ has a discrete spectrum $\{\lambda_k^\cN(\Omega)\}_{k \in \nn}$ (counted with multiplicity)  satisfying
  \begin{equation*}
  0=\lambda_1^\cN(\Omega)\le \lambda_2^\cN(\Omega)\le \dots \lambda_k^\cN(\Omega)\le \dots \rightarrow +\infty.
  \end{equation*}

  \item denoted by $\{\lambda_k\}_k$ the spectrum for the Laplacian in $ (\overline \Omega,\sfdomega, \mmomega),$  it holds that 
  \[
  \lambda_k^\cN(\Omega)=\lambda_k, \quad \forall k \in \nn.
  \]
  \end{enumerate}
\end{cor}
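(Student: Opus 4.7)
The plan is to reduce everything to Theorem \ref{thm:sobolev compatibility} (and its consequence, Theorem \ref{thm:uniform implies PI}): writing $Y \coloneqq (\overline{\Omega}, \sfdomega, \mmomega)$, that theorem produces a surjective linear isometry $\Phi \colon \W(Y) \to \W(\Omega)$, $\Phi(\tilde u) \coloneqq \tilde u\restr{\Omega}$, and identity \eqref{eq:scalar compatibility} shows that the Dirichlet scalar product is preserved. Since $\mea(\partial\Omega)=0$ by Lemma \ref{lem:zero boundary}, the restriction/extension by zero also identifies $L^2(\Omega)$ with $L^2(\overline{\Omega}, \mmomega)$ canonically. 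These two facts together mean that the full variational setup of the Neumann Laplacian on $\Omega$ translates verbatim into the variational setup of the Laplacian on the metric measure space $Y$.

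For \emph{(i)}, fix $u \in \W(\Omega)$ and let $\tilde u \coloneqq \Phi^{-1}(u) \in \W(Y)$. Given any $\tilde g \in \W(Y)$, set $g \coloneqq \tilde g\restr{\Omega} \in \W(\Omega)$; then using $\mea(\partial\Omega)=0$ and \eqref{eq:scalar compatibility}, the Neumann equation
\[
\int_{\Omega} (-\lambda u) g\, \d\mm = -\int_{\Omega} \nabla u \cdot \nabla g\, \d\mm \quad \forall g \in \W(\Omega)
\]
is equivalent to
\[
\int_{\overline{\Omega}} (-\lambda \tilde u) \tilde g\, \d\mm = -\int_{\overline{\Omega}} \nabla \tilde u \cdot \nabla \tilde g\, \d\mm \quad \forall \tilde g \in \W(Y),
\]
because $\Phi$ is surjective. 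Hence $u$ is a Neumann eigenfunction in $\Omega$ iff $\tilde u$ is an eigenfunction of the Laplacian on $Y$, with the same eigenvalue. Since by Theorem \ref{thm:uniform implies PI} the space $Y$ is an infinitesimally Hilbertian PI space, Theorem \ref{thm:continuous eigen} applied to $Y$ gives that $\tilde u$ is locally H\"older in $\overline{\Omega}$; as $\sfdomega$ agrees with $\sfd$ on $\Omega\times\Omega$, this yields a H\"older continuous representative of $u$ in $\Omega$.

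For \emph{(ii)}, observe that $Y$ is itself a bounded infinitesimally Hilbertian PI space, so that $\LIP_{bs}(Y) = \LIP(Y)$ and, by density of bounded Lipschitz functions in $\W(Y)$ (see \cite{AGS13}), $\W_0(Y) = \W(Y)$. Applying Theorem \ref{thm:compact embedding} to the bounded open subset $Y \subset Y$ therefore yields that $\W(Y) \hookrightarrow L^2(\overline{\Omega}, \mmomega)$ is compact. Transferring through the isometry $\Phi$ and the canonical identification $L^2(\Omega) \cong L^2(\overline{\Omega}, \mmomega)$, we obtain that $\W(\Omega) \hookrightarrow L^2(\Omega)$ is compact, and Lemma \ref{lem:discrete spectrum} then gives the discreteness of the spectrum of $-\Delta_\cN$ with the claimed ordering. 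Finally, \emph{(iii)} is immediate from \emph{(i)}: the eigenspace of $-\Delta_\cN(\Omega)$ for eigenvalue $\lambda$ is mapped bijectively by $\Phi$ onto the eigenspace of the Laplacian on $Y$ for the same $\lambda$, and since $\Phi$ is a linear isometry the multiplicities coincide, so $\lambda_k^\cN(\Omega) = \lambda_k$ for all $k\in\N$. I don't anticipate a real obstacle here: the work has already been done in Theorem \ref{thm:sobolev compatibility} and Theorem \ref{thm:uniform implies PI}, and the only point to be slightly careful about is the identification $\W_0(Y)=\W(Y)$ needed to apply the compact-embedding statement in the form stated as Theorem \ref{thm:compact embedding}.
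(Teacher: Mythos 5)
Your proposal is correct and follows essentially the same route as the paper: reduce everything to the isometry $\Phi$ from Theorem \ref{thm:sobolev compatibility} together with \eqref{eq:scalar compatibility} and $\mm(\partial\Omega)=0$, transfer continuity via Theorem \ref{thm:continuous eigen} and compactness via Theorem \ref{thm:compact embedding} applied to the whole space $(\overline\Omega,\sfdomega,\mmomega)$, and deduce (iii) from the bijective correspondence of eigenspaces. The only (welcome) extra detail is your explicit remark that $\W_0=\W$ on the bounded space $(\overline\Omega,\sfdomega,\mmomega)$, which the paper uses implicitly.
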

\begin{proof}
    First observe that the statement makes sense since by Theorem \ref{thm:uniform implies PI} we have that $(\overline \Omega,\sfdomega, \mmomega)$ is a bounded  infinitesimally Hilbertian PI space and so by \eqref{eq:PI discrete sepctrum} we have that the Laplacian in $ (\overline \Omega,\sfdomega, \mmomega)$ has a discrete spectrum $\{\lambda_k\}_{k}.$ It is sufficient to show $i)$ and $ii)$, because then  $iii)$ would follow from the definitions. Suppose that $u \in \W(\Omega)$ is an eigenfunction for the Neumann Laplacian in $\Omega$ of eigenvalue $\lambda $. Then by Theorem \ref{thm:sobolev compatibility}  there exists $\tilde u \in \W(\overline \Omega,\sfdomega, \mmomega)$ such that $\tilde u \restr\Omega=u.$
    Moreover for every $v \in \W(\overline \Omega,\sfdomega, \mmomega)$, again by Theorem \ref{thm:sobolev compatibility}, we have that $v\restr\Omega \in \W(\Omega)$. Then applying \eqref{eq:scalar compatibility} we obtain
    \[
     \int_{\overline \Omega} \nabla \tilde u\cdot \nabla v \,\d \mm= \int_{\Omega} \nabla u \cdot \nabla v\restr\Omega \,\d \mm=-\lambda \int_\Omega uv\restr\Omega \d \mm=-\lambda \int_{\overline \Omega} \tilde uv \d \mm,
    \]
where in the second identity we used the definition of eigenfunction and in the last one that $\mea(\partial \Omega)=0,$ because $\Omega$ is a uniform domain. This shows that $\tilde u$ is an eigenfunction of eigenvalue $\lambda$ for the Laplacian in $(\overline \Omega,\sfdomega, \mmomega)$. Then by Theorem \ref{thm:continuous eigen} $\tilde u$ has a H\"older continuous representative in $\overline \Omega$, which implies that $u$ has also a continuous representative in $\Omega$. Conversely suppose that $\tilde u$  is an eigenfunction of eigenvalue $\lambda$ for the Laplacian in $(\overline \Omega,\sfdomega, \mmomega)$. Then by Theorem \ref{thm:sobolev compatibility} we have $\tilde u \restr \Omega \in \W(\Omega)$. Moreover, again by Theorem \ref{thm:sobolev compatibility}, for every $v \in \W(\Omega)$ there exists $\tilde v \in  \W(\overline \Omega,\sfdomega, \mmomega)$ such that $\tilde v \restr\Omega=v.$ Therefore as above using  \eqref{eq:scalar compatibility}
 \[
    -\lambda \int_\Omega \tilde u\restr\Omega v\d \mm=-\lambda \int_{\overline \Omega} \tilde u \tilde v \d \mm= \int_{\overline \Omega} \nabla \tilde u\cdot \nabla \tilde v \,\d \mm= \int_{\Omega} \nabla \tilde u\restr\Omega \cdot \nabla v \,\d \mm.
    \]
  This shows that $\tilde u\restr \Omega$ is an eigenfunction of eigenvalue $\lambda$ for the Neumann Laplacian in $\Omega$ and completes the proof of $i).$ For $ii)$ recall that by Theorem \ref{thm:sobolev compatibility} the map  $\Phi:  \W(\overline \Omega,\sfdomega, \mmomega)\to \W(\Omega)$, given by $\Phi(u)=u\restr \Omega$ is an isometry and that by Theorem \ref{thm:compact embedding} the inclusion $\iota: \W(\overline \Omega,\sfdomega, \mmomega)\hookrightarrow L^2(\overline{\Omega},\mmomega)$ is compact. Let now $u_n \in \W(\Omega)$ be a sequence bounded in $\W(\Omega).$ Then the sequence $\Phi^{-1}(u_n)\in \W(\overline \Omega,\sfdomega, \mmomega)$ is also bounded, hence it has a converging subsequence in $L^2(\overline{\Omega},\mmomega)$. However by definition $\Phi^{-1}(u_n)\restr \Omega=\Phi(\Phi^{-1}(u_n)=u_n$ for every $n$. Hence $u_n$ has also a converging subsequence in $L^2(\Omega,\mm\restr\Omega)$, which shows that the embedding $\W(\Omega)\hookrightarrow L^2(\Omega,\mm\restr\Omega)$ is compact. This completes the proof of $ii).$
    \end{proof}

\begin{remark}
  If the ambient space $\Xdm$ is an $\rcd(K,N)$ space, with $N<+\infty$, then the Neumann and Dirichlet eigenfunctions are actually locally Lipschitz in the interior of the domain, as follows directly from \cite[Theoem 1.1]{Jiang14} (see also \cite[Prop. 7.1]{AHPT21}). Recall that the continuity of eigenfunctions  is crucial to define their nodal domains.
  \fr  
\end{remark}

Next we show that for a uniform domain $\Omega $ there is a one to one correspondence between the nodal domains in $\Omega$ and the nodal domains in its closure.
\begin{prop}[Compatibility of nodal domains]\label{prop:components compatibility}
    Let $(\X,\sfd)$ be a metric space, $\Omega\subset \X$ be a uniform domain and $f:\overline \Omega\to \rr$ be a continuous function. Denote by $\mathcal C$ (resp.\ $\overline{\mathcal C}$) the set all the connected components of   $\Omega  \setminus \{f=0\}$ (resp.\   $\overline \Omega  \setminus \{f=0\}$). Then
    \begin{equation}\label{eq:components compatibility}
         \overline  {\mathcal C}=\{U\cup (\partial U\cap (\partial \Omega\setminus\{f=0\})) \ : \ U\in \mathcal C\}. 
    \end{equation}
    In particular the sets $\mathcal C$ and $\overline{\mathcal C}$ have the same cardinality.
\end{prop}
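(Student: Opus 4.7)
The plan is to prove directly that the assignment $\Phi \colon \mathcal{C} \to \overline{\mathcal{C}}$, $\Phi(U):=\tilde U := U\cup(\partial U\cap(\partial\Omega\setminus\{f=0\}))$, is a bijection. To this end I will show that $\{\tilde U\}_{U\in\mathcal{C}}$ is a partition of $\overline\Omega\setminus\{f=0\}$ into nonempty, connected, relatively open subsets; since every member of an open partition of a topological space is automatically clopen, each component $W\in\overline{\mathcal{C}}$ must then coincide with a single $\tilde U$, giving both the bijection and \eqref{eq:components compatibility} at once.

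The key geometric input, and where the uniformity of $\Omega$ enters in an essential way, is the following boundary accessibility statement extracted from Definition \ref{def: unif dom}. Let $C\ge 1$ be the uniformity constant. For any $x\in\overline\Omega$ and any $y_1,y_2\in\Omega\cap B_{r'}(x)$, the curve produced by the uniform condition has length at most $C\sfd(y_1,y_2)\le 2Cr'$, and therefore lies entirely inside $\Omega\cap B_{(2C+1)r'}(x)$. Consequently, if $f(x)\neq 0$ and $r>0$ is chosen small enough that $f\neq 0$ throughout $\overline\Omega\cap B_r(x)$ (which is possible by continuity of $f$), then the whole set $\Omega\cap B_{r/(2C+1)}(x)$ is contained in a single connected component of $\Omega\setminus\{f=0\}$.

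The three required properties of $\{\tilde U\}$ are then quickly checked. First, each $\tilde U$ is connected, since $U\subseteq\tilde U\subseteq\overline U$ and any set sandwiched between a connected set and its closure is connected. Second, for covering and disjointness: points of $\Omega\setminus\{f=0\}$ are already partitioned by $\mathcal{C}$, so the issue is a point $x\in\partial\Omega\setminus\{f=0\}$, where the accessibility statement singles out a unique $U_x\in\mathcal{C}$ with $\Omega\cap B_{r/(2C+1)}(x)\subseteq U_x$; since $x\in\overline\Omega\setminus\Omega$ is a limit point of $\Omega$ it is a limit of points in $U_x$, so $x\in\partial U_x$ and hence $x\in\tilde U_x$, while the uniqueness of $U_x$ forbids $x\in\tilde U$ for any other $U$. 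Third, relative openness of $\tilde U$ in $\overline\Omega\setminus\{f=0\}$: at $y\in U\subseteq\Omega$ this is immediate because $U$ is open in $\Omega$, and at $x\in\partial U\cap(\partial\Omega\setminus\{f=0\})$ any nearby $x'\in\overline\Omega\setminus\{f=0\}$ either lies in $\Omega\cap B_{r/(2C+1)}(x)\subseteq U\subseteq\tilde U$, or lies in $\partial\Omega\setminus\{f=0\}$, in which case the component $U_{x'}$ associated to $x'$ shares at least one $\Omega$-point with $U_x$ (any point of $\Omega$ in $B_{r/(2C+1)}(x)\cap B_{r''}(x')$ for $r''$ small enough), forcing $U_{x'}=U_x=U$ and hence $x'\in\tilde U$.

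The main obstacle is precisely the accessibility property used in the last two steps: without the quantitative length control provided by the uniform condition, one could imagine two distinct nodal domains of $\Omega$ being glued together at a single boundary point of $\partial\Omega\setminus\{f=0\}$, which would destroy both the disjointness of the $\tilde U$'s and the bijectivity of $\Phi$. Once this obstacle is handled, $\{\tilde U\}_{U\in\mathcal{C}}$ is a partition of $\overline\Omega\setminus\{f=0\}$ by clopen connected subsets, so every $W\in\overline{\mathcal{C}}$ coincides with exactly one $\tilde U$, as claimed.
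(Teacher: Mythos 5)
Your proof is correct and takes essentially the same route as the paper: the same map $U\mapsto U\cup(\partial U\cap(\partial \Omega\setminus\{f=0\}))$ and the same key use of the uniformity constant to show that, near any $x\in\overline\Omega$ with $f(x)\neq 0$, the set $\Omega\cap B_{r/(2C+1)}(x)$ lies in a single nodal component of $\Omega\setminus\{f=0\}$, after which both arguments reduce to the same topological bookkeeping (you package it as a partition into relatively open connected sets, the paper checks openness, closedness, connectedness and covering directly). The only elided point is your assertion that each $U\in\mathcal C$ is open in $\Omega$, which is not automatic in a general metric space but holds here by local connectedness of uniform domains (as the paper records) or, alternatively, by applying your own accessibility observation at interior points.
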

\begin{proof}
For every $U\in \mathcal{C}$ we put $\phi(U)\coloneqq U\cup (\partial U\cap (\partial \Omega\setminus\{f=0\}))$ and we want to show that $\phi$ defines a bijective map $\phi: \mathcal C\to \overline{\mathcal C}$. We note immediately that $\phi$ is injective, because if $\phi(U)=\phi(V)$  then $U=\phi(U)\cap \Omega=\phi(V)\cap \Omega=V $. 

 To conclude it is sufficient to prove that the sets  $\{\phi(U)\}_{U \in \mathcal C}$   are open, closed and connected in the topology of $\overline \Omega  \setminus \{f=0\}$ and  that their union is $\overline \Omega  \setminus \{f=0\}$. Indeed this would show that  $\{\phi(U)\}_{U \in \mathcal C}$   are exactly the connected components of $\overline \Omega  \setminus \{f=0\}$, which would imply \eqref{eq:components compatibility} and in particular that $\phi$ is surjective.

Note first that the elements of  $\mathcal C$ are open in the topology of $\Omega \setminus \{f=0\} $ (and thus also in the one of $\X$) because $\Omega \setminus \{f=0\} $ is locally connected   (recall Lemma \ref{lem:basic nodal}) since it is an open subset of $\Omega$ which is a uniform domain and thus locally connected.

The key observation is that for every $x \in \partial \Omega\setminus\{f=0\}$ there exists $r>0$ and $U \in \mathcal C$ such that
\begin{equation}\label{eq:key}
B_r(x)\cap \overline \Omega\subset \phi(U)= U\cup (\partial U\cap (\partial \Omega\setminus\{f=0\})).
\end{equation}
 To prove this note that $f(x)\neq 0$.  Then by continuity there exists $r_0>0$ so that   $B_r(x)\cap  \overline  \Omega\subset\overline \Omega \setminus  \{f =0\}$ for every $r\in (0,r_0]$. In particular for every $r\in (0,r_0]$ the set $B_r(x)\cap  \Omega$ is contained in $\bigcup_{U \in \mathcal C} U$. Suppose that for some $r>0$ the set $B_r(x)\cap  \Omega$ intersects at at least two distinct sets $U_r,V_r\in \mathcal C$ and take two points $u_r\in U_r\cap B_r(x),v_r\in V_r\cap B_r(x)$. Since $\Omega$ is a uniform domain there exists a rectifiable curve $\gamma:[0,1]\to \X$ contained in $\Omega$, connecting $u_r$ and $v_r$ and of length $l(\gamma)\le C\sfd(u_r,v_r)\le 2Cr$, where $C>0$ is some constant independent of $r.$ Then $\gamma([0,1])$ must  intersect $\{f=0\}$ otherwise $\gamma([0,1])\subset \Omega \setminus\{f=0\}$ and $U_r\cup V_r\cup \gamma([0,1])$ would be a connected subset of $\Omega\setminus\{f=0\}$, which contradicts the fact that $U_r,V_r$ are distinct connected components of  $\Omega \setminus\{f=0\}.$  Therefore, since $\gamma([0,1]) \subset B_{(2C+1)r}(x)$, it holds $ B_{(2C+1)r}(x)\cap \Omega \cap  \{f =0\}\neq \emptyset$ and so by the choice of $r_0$ we must have $(2C+1)r>r_0$. This proves  that for $r$ small enough $B_r(x)\cap  \Omega$ intersects at most one set in $\mathcal C$. However, as observed above, $B_r(x)\cap  \Omega\subset \bigcup_{U \in \mathcal C} U$ for every $r \in (0,r_0]$. Therefore for $r>0$ small enough we must have that $ B_r(x)\cap \Omega \subset U$ for some $U \in \mathcal C.$ Fix one such $r>0$ and fix $y \in  B_r(x)\cap \partial \Omega$. Then for every $s>0$ small enough $B_s(y)\cap \Omega\neq \emptyset$ and $B_s(y)\cap \Omega\subset B_r(x)\cap \Omega\subset U$. This shows that $y \in \partial U$. Therefore  for every $r>0$ small enough $B_r(x)\cap \Omega \subset U$, $B_r(x)\cap \partial \Omega \subset  \partial U$ for some $U\in \mathcal C$ and as observed above $B_r(x)\cap \overline \Omega\subset \overline \Omega \setminus\{f=0\}$. Combining these three facts  proves \eqref{eq:key}.
 
Consider now any $U \in \mathcal C$ and note that
       $\phi(U)$ is precisely the closure of $U$ in the topology of $\overline \Omega  \setminus \{f=0\}$. 
    Indeed
    \begin{align*}
        \overline{U}\cap( \overline \Omega  \setminus \{f=0\})&
        =\left(U\cap ( \overline \Omega  \setminus \{f=0\}) \right)\cup 
        \left(\partial U\cap ( \overline \Omega  \setminus \{f=0\}) \right)\\
        &= U \cup \left(\partial U\cap ( \partial \Omega  \setminus \{f=0\}) \right)\cup \left(\partial U\cap (\Omega  \setminus \{f=0\}) \right)\\
        &=U \cup \left(\partial U\cap ( \partial \Omega  \setminus \{f=0\}) \right)=\phi(U),
    \end{align*}
    where we used that $U\subset \Omega  \setminus \{f=0\}$ and that  $\partial U\cap (\Omega  \setminus \{f=0\})=\emptyset$ because $U$ is  closed in the topology of $\Omega  \setminus \{f=0\}$, being a connected component, but also open  as observed above. Moreover $U$ is connected in $\Omega  \setminus \{f=0\}$ and thus also in $\overline \Omega  \setminus \{f=0\}$, hence  $\phi(U)$ is also connected in the same topology, being the closure of a connected set. Additionally \eqref{eq:key} implies that $\phi(U)$ is  open in the topology of $\overline \Omega \setminus \{f=0\}$. It remains to prove that the union of the sets $\{\phi(U)\}_{U \in \mathcal C}$ is $\overline \Omega  \setminus \{f=0\}$, which can be seen as follows
       \begin{align*}
     \overline \Omega \setminus \{f=0\} &=(\Omega \setminus \{f=0\})  \cup  
     (\partial \Omega \setminus \{f=0\}) \overset{\eqref{eq:key}}{\subset} (\Omega \setminus \{f=0\})  \cup \left(\bigcup_{U\in \mathcal C} \phi(U)\right) \\
     &\subset \left(\bigcup_{U\in \mathcal C,} U \right)\cup  \left(\bigcup_{U\in \mathcal C} \phi(U)\right) 
     =\bigcup_{U\in \mathcal C} \phi(U).
\end{align*}

\end{proof}

We pass now to prove the main result of this section.

\begin{proof}[Proof of Theorem \ref{th:almost-iso}]
The fact that $\Xdm$ is an infinitesimally Hilbertian PI space is contained in Theorem \ref{thm:uniform implies PI}. Recall also that by  Lemma \ref{lem:zero boundary} it holds $\hau^N(\partial \Omega)=0.$  Item $\ref{it:techdim}$ follows immediately combining \eqref{eq:RCD doulbing} and Lemma \ref{lem:dim doubling} and recalling that $\Omega$ is bounded. 
   
   We pass to the proof of $\ref{it:isop}.$  Fix $\eps\in(0,1)$ arbitrary. 
We choose
$$C_\eps\coloneqq \{x \in \overline\Omega \ : \ \theta_{N}(x)\le 1-\eps\}\cup \partial \Omega$$ (see \eqref{eq:density} for the definition of $\theta_N$). From the lower semicontinuity of the function $ \theta_N(\cdot)$, it follows that $C_\eps$ is a closed subset of $\overline\Omega$. Moreover, since $\theta_{N}(x)=1$ for $\hau^N$-a.e.\ $x$ (recall \eqref{eq:theta 1}) and $\hau^N(\partial \Omega)=0,$ it follows  $\hau^N(C_\eps)=0$. Note that by construction $1-\eps < \theta_N(x)\le 1$ for all $x \in \overline\Omega \setminus C_\eps\subset \Omega$. Therefore we can apply the local almost-Euclidean isoperimetric inequality given by  Theorem \ref{thm:local euclidean isop} (recall also Lemma \ref{lem:perimeter inside}) and obtain that for every $x \in \overline\Omega \setminus C_\eps$ and every $\eps \in(0,1/4)$ there exists $\rho=\rho(x,N,\eps)<\tilde \sfd(x,\Omega^c)$ such that
	\[
		\Per_\X( E) \ge  \mm(E)^\frac{N-1}{N}  N\omega_N^{\frac1N}(1-2\eps)^{\frac1N}(1-\eps), \quad \forall \, E \subset B_\rho(x)=B^\X_\rho(x) \text{ Borel},
	\]	
 which shows \eqref{eq:local eucl}. It remains to show $\ref{it:weyl}$. Denote by $\lambda_k$, $k \in \nn$ the spectrum of the Laplacian in $\Xdm$ (counted with multiplicity and in non-decreasing order). Recall that by Corollary \ref{cor:spectrum compatilbity} $\lambda_k=\lambda_k^\cN(\Omega)$ for every $k \in \nn$, where $\lambda_k^\cN(\Omega)$ is the $k$-th Neumann eigenvalue of $\Omega$ (in non-decreasing order).  Recalling Lemma \ref{lem:discrete spectrum} we know that $\lambda_k^\cN(\Omega)\le \lambda_k^\cD(\Omega)$ for every $k \in \nn$, where $\lambda_k^\cD(\Omega)$ is the $k$-th Dirichlet Laplacian eigenvalue of $\Omega.$ Then \eqref{eq:weyl} follows from the Weyl law for the  Dirichlet Laplacian (see Theorem \ref{thm:weyl rcd}):
     \[
     \limsup_{k\to +\infty} \frac{\lambda_k^{N/2}}{k}=\limsup_{k\to +\infty} \frac{(\lambda_k^{\mathcal N}(\Omega))^{N/2}}{k}\le\lim_{k\to +\infty} \frac{(\lambda_k^\cD(\Omega))^{N/2}}{k}=\frac{(2\pi)^N }{\omega_n\hau^N(\Omega)}=\frac{(2\pi)^N }{\omega_n\mea(\X)},
     \]
     having used again $\mea(\partial \Omega)=0.$
\end{proof}

\section{From local to global isoperimetric inequality}\label{sec:isop}
In this section we prove the following crucial result. Informally speaking, it says that in a PI space satisfying an almost-Euclidean isoperimetric inequality around almost-every point, the same isoperimetric inequality extends to all sets having sufficiently small volume and avoiding a  `bad' but small region of the space.
\begin{theorem}\label{thm:almost euclidean for small volumes PI}
    Let $\Xdm$ be a bounded PI space and fix $N>1$. Suppose that for every $\eps>0$ there exists a closed set $C_\eps\subset \X$ with $\mea(C_\eps)=0$ such that for every $x \in \X\setminus C_\eps$ there exists a constant $\rho=\rho(x,N,\eps)>0$  satisfying
    \begin{equation}\label{eq:local eucl pi}
		    \Per(E)\ge(1-\eps)N\omega_N^{\frac1N}\mea(E)^{\frac{N-1}{N}}, \quad \forall \, E \subset B_{\rho}(x)\, \text{ Borel.}
		\end{equation}
    Then for every $\eps\in(0,1)$ and $\eta>0$ there exists an open set $U_{\eps,\eta}\subset \X$ with $\mm(U_{\eps,\eta})<\eta$ and  constants $\beta=\beta(\X,\eps, N,\eta)>0$, $\beta'=\beta'(\eps)>0$ such that
		\begin{equation}\label{eq:euclidean isop}
			\Per(E)\ge (1-\eps)N\omega_N^{\frac1N}\mea(E)^{\frac{N-1}{N}},
		\end{equation}
	for every $E\subset \X$ Borel  satisfying
	\[
	0<\mea(E)\le \beta, \quad \frac{\mea(E\cap U_{\eps,\eta})}{\mea(E)}\le \beta'.
	\]
\end{theorem}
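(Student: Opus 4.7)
The plan is to deduce the global-in-small-volumes isoperimetric inequality from the pointwise local one via a compactness-plus-Vitali-covering argument in the spirit of B\'erard--Meyer. I would first fix an auxiliary parameter $\eps' \in (0, \eps)$ to be calibrated at the end. By hypothesis, $C_{\eps'}$ is closed with $\mm(C_{\eps'}) = 0$, and since $\mm$ is a finite Radon measure on the complete separable space $\X$, outer regularity yields an open set $U_{\eps,\eta} \supset C_{\eps'}$ with $\mm(U_{\eps,\eta}) < \eta$. The set $K := \X \setminus U_{\eps,\eta}$ is closed, hence compact in the proper PI space $\X$. For each $y \in K$ the hypothesis furnishes a radius $\rho(y, N, \eps') > 0$; extracting a finite subcover from $\{B_{\rho(x)/2}(x) : x \in K\}$ and letting $\rho_0$ be half the minimum radius of the selected balls, one obtains a uniform scale such that for every $y \in K$, the local almost-Euclidean isoperimetric inequality with constant $1 - \eps'$ holds on $B_{\rho_0}(y)$.

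Next, I would fix a Borel set $E$ with $\mm(E) \le \beta$ and $\mm(E \cap U_{\eps,\eta})/\mm(E) \le \beta'$, where $\beta, \beta'$ are to be determined. Almost every point of $E$ is a density point by Lebesgue's differentiation theorem in doubling spaces. For each density point $y \in E \setminus U_{\eps,\eta}$ and each small $R \in (0, \rho_0)$, the coarea inequality (Proposition \ref{prop: coarea}) applied to the function $\sfd(y, \cdot)$, combined with an averaging argument, yields a radius $r(y, R) \in (R/2, R)$ such that $B_{r(y,R)}(y)$ has finite perimeter with a quantitative control on $\Per(B_{r(y,R)}(y), E^{(1)})$. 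Applying the Vitali covering theorem (valid in doubling metric measure spaces) to the family of such balls, extract a countable disjoint subfamily $\{B_j := B_{r_j}(y_j)\}$ with $\mm((E \setminus U_{\eps,\eta}) \setminus \bigcup_j B_j) = 0$; in particular $\sum_j \mm(E \cap B_j) \ge (1-\beta') \mm(E)$.

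On each $B_j \subset B_{\rho_0}(y_j)$, the local inequality gives $\Per(E \cap B_j) \ge (1-\eps') N \omega_N^{1/N} \mm(E \cap B_j)^{(N-1)/N}$. By Proposition \ref{prop:per intersection} and the disjointness of $\{B_j^{(1)}\}_j$ (Lemma \ref{lem: essb and essi basic}(v)), one has
\[
\Per(E) \;\ge\; \sum_j \Per(E \cap B_j) \;-\; \sum_j \Per(B_j, E^{(1)}).
\]
Combined with the concavity of $t \mapsto t^{(N-1)/N}$ and the mass bound from the previous step:
\[
\Per(E) \;\ge\; (1-\eps')(1-\beta')^{(N-1)/N} N \omega_N^{1/N} \mm(E)^{(N-1)/N} \;-\; \sum_j \Per(B_j, E^{(1)}).
\]
The radii $r_j$ will be chosen so that $\sum_j \Per(B_j, E^{(1)}) \le C\, \mm(E)$ for some constant $C = C(\X, \rho_0)$. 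Setting $\eps' = \eps/4$, picking $\beta'$ so that $(1-\eps')(1-\beta')^{(N-1)/N} \ge 1 - \eps/2$, and finally choosing $\beta$ small enough that $C\,\mm(E)^{1/N} \le (\eps/2) N \omega_N^{1/N}$ whenever $\mm(E) \le \beta$, one recovers the desired inequality with constant $(1-\eps)$.

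The delicate point is controlling the artificial-boundary term $\sum_j \Per(B_j, E^{(1)})$ by $C\,\mm(E)$ rather than by $\mm(E)^{(N-1)/N}$ (which would not be absorbable in the small-volume regime). Intuitively, at scales where $y_j$ is a density point of $E$, the boundary of $B_j$ lying in $E^{(1)}$ is small because $E$ does not extend much beyond $\partial B_j$; making this quantitative via the coarea inequality applied to $\sfd(y_j, \cdot)$ restricted to $E^{(1)}$, and combining it with a bounded-overlap property of the selected balls at scale $\rho_0$, is the technical heart of the proof. Once this control is in place the calibration is routine.
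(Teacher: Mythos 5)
Your first step (outer regularity to get $U_{\eps,\eta}\supset C_{\eps'}$, compactness of $K=\X\setminus U_{\eps,\eta}$, and a finite subcover giving a uniform scale $\rho_0$ at which \eqref{eq:local eucl pi} holds around every point of $K$) coincides with the paper's setup. The divergence, and the gap, is in the covering you use afterwards. The paper (Proposition \ref{prop:local to global}) keeps the \emph{finite} family of balls centered at the fixed points $x_1,\dots,x_M$ of the subcover of $K$, with radii $r_i\in(\rho(x_i)/2,\rho(x_i))$ bounded below by a constant $\bar\rho/2$ depending only on the covering; the radii are tuned via the coarea inequality (Proposition \ref{prop: coarea}) so that $\Per(B_{r_i}(x_i),E^{(1)})\le 3\mm(E)/\rho(x_i)$, the cover is then disjointified into the sets $U_i=B_i\cap B_{i-1}^c\cap\dots\cap B_1^c$ (using Lemma \ref{lem:ballsper} and Proposition \ref{prop:per intersection}), and the total error is $\le 6M\mm(E)/\bar\rho$, i.e.\ genuinely linear in $\mm(E)$ with a constant independent of $E$, hence absorbable once $\mm(E)\le\beta$. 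You instead take a Vitali-type \emph{disjoint, countable} family of balls centered at density points of $E\setminus U_{\eps,\eta}$ with radii adapted to $E$; this makes the disjointification and Lemma \ref{lem:ballsper} unnecessary, but it destroys the error estimate.

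Concretely, the claim on which your proof hinges, $\sum_j\Per(B_j,E^{(1)})\le C(\X,\rho_0)\,\mm(E)$, is not just unproved but false for the family you construct. A Vitali subfamily that covers $E\setminus U_{\eps,\eta}$ up to a null set must contain balls of arbitrarily small radii $r_j$ (a disjoint family at a fixed scale cannot cover a set almost everywhere), and your own coarea selection only gives $\Per(B_j,E^{(1)})\le 2\,\mm(B_{R_j}(y_j)\cap E)/R_j$ with $R_j\le 2r_j$. Since the centers $y_j$ are density points of $E$, one has $\mm(B_{R_j}(y_j)\cap E)\approx\mm(B_{R_j}(y_j))$ at small scales, so each error term is comparable to the full perimeter of $B_j$, i.e.\ to $\mm(B_j)^{\frac{N-1}{N}}$ under the natural volume behaviour; summing over the disjoint family, whose union carries at least a fixed fraction of $\mm(E)$, and using subadditivity of $t\mapsto t^{\frac{N-1}{N}}$, the error is bounded \emph{below} by a constant (not small) multiple of $\mm(E)^{\frac{N-1}{N}}$ — exactly the order of the main term, and it does not shrink as $\mm(E)\to 0$, so no choice of $\beta,\beta'$ can absorb it. Your closing heuristic, that at a density point ``the boundary of $B_j$ lying in $E^{(1)}$ is small because $E$ does not extend much beyond $\partial B_j$,'' is backwards: at a density point $E$ fills the ball, so essentially all of $\Per(B_j,\cdot)$ sits on $E^{(1)}$ (think of $E$ a small ball and $B_j$ a concentric smaller one, where $\Per(B_j,E^{(1)})=\Per(B_j)$). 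The way out is the paper's: keep the balls at the fixed macroscopic scale $\bar\rho$ determined by $K$ alone, accept that they overlap, and pay for the disjointification with a perimeter error of size $\mm(E)/\bar\rho$ per ball over only $M$ balls. The remaining ingredients of your argument (Proposition \ref{prop:per intersection} plus disjointness of the $B_j^{(1)}$ from Lemma \ref{lem: essb and essi basic}, the concavity step, and the final calibration of $\eps',\beta,\beta'$) are fine, but they cannot rescue the error term.
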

Observe that assumption \eqref{eq:local eucl pi} is the same as item $ii)$ in Theorem \ref{th:almost-iso}.

The proof of Theorem \ref{thm:almost euclidean for small volumes PI} takes inspiration from the arguments in Appendix $C$ of \cite{BM82} in the smooth setting,   but requires also to deal with the technical issues arising from working in a non-smooth metric space.

We start with an estimate for the perimeter of the complement of the union of a finite number of balls.
\begin{lemma}\label{lem:ballsper}
	Let $\Xdm$ be a PI space. Suppose that $B_i\coloneqq B_{r_i}(x_i)\subset \X$, $i=1,...,k$, $k \in \nn,$ have all finite perimeter and satisfy $\Per(B_i,\essb B_j)=0$ for all $i\neq j$. Then
	\begin{equation}\label{eq:ballsper}
		\Per(B_1^c\cap B_2^c\cap...\cap B_k^c,.)\le \sum_{i=1}^k \Per(B_i,.) \restr{B_1^c\cap B_2^c\cap...\cap B_k^c}.
	\end{equation}
\end{lemma}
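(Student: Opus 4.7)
The proof will go by induction on $k$. The base case $k=1$ is immediate from the identity $\Per(B_1^c,\cdot)=\Per(B_1,\cdot)=\Per(B_1,\cdot)\restr{B_1^c}$ recorded in \eqref{eq:ball perimeters}. For the inductive step, set $E\coloneqq B_1^c\cap\ldots\cap B_{k-1}^c$ and $F\coloneqq B_k^c$. I will apply Proposition \ref{prop:per intersection} to $E$ and $F$, which requires checking $\Per(E,\essb F)=0$, i.e.\ $\Per(E,\essb B_k)=0$. By the inductive hypothesis,
\[
\Per(E,\essb B_k)\le \sum_{i=1}^{k-1}\Per(B_i,\cdot)\restr{E}(\essb B_k)\le \sum_{i=1}^{k-1}\Per(B_i,\essb B_k)=0,
\]
where the last equality uses precisely the hypothesis $\Per(B_i,\essb B_j)=0$ for $i\neq j$.

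Proposition \ref{prop:per intersection} then yields
\[
\Per(E\cap F,\cdot)\le \Per(E,\cdot)\restr{F^{(1)}}+\Per(F,\cdot)\restr{E^{(1)}}.
\]
To turn the right-hand side into $\sum_{i=1}^{k}\Per(B_i,\cdot)\restr{E\cap F}$, I will use Lemma \ref{lem: essb and essi basic}. For the first summand, since $B_k$ is open we have $F^{(1)}=(B_k^c)^{(1)}\subset B_k^c=F$ by item \textit{(iv)}, hence combined with the inductive hypothesis
\[
\Per(E,\cdot)\restr{F^{(1)}}\le \sum_{i=1}^{k-1}\Per(B_i,\cdot)\restr{E\cap F^{(1)}}\le \sum_{i=1}^{k-1}\Per(B_i,\cdot)\restr{E\cap F}.
\]
For the second summand, applying items \textit{(iii)} and \textit{(iv)} iteratively gives $E^{(1)}\subset (B_1^c)^{(1)}\cap\ldots\cap(B_{k-1}^c)^{(1)}\subset E$, and using \eqref{eq:ball perimeters} we obtain
\[
\Per(F,\cdot)\restr{E^{(1)}}=\Per(B_k,\cdot)\restr{F\cap E^{(1)}}\le \Per(B_k,\cdot)\restr{E\cap F}.
\]
Summing the two bounds closes the induction.

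The argument is essentially bookkeeping, so I do not anticipate a main obstacle; the only delicate point is verifying the hypothesis of Proposition \ref{prop:per intersection} at the inductive step, which is exactly what the assumption $\Per(B_i,\essb B_j)=0$ is designed to provide.
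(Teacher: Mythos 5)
Your proof is correct and follows essentially the same route as the paper: induction on $k$, with Proposition \ref{prop:per intersection} at the inductive step and items (iii)--(iv) of Lemma \ref{lem: essb and essi basic} together with \eqref{eq:ball perimeters} to reduce both terms to restrictions on $B_1^c\cap\dots\cap B_k^c$. The only (harmless) difference is in how the hypothesis of Proposition \ref{prop:per intersection} is verified: the paper bounds $\Per(B_{k+1},\essb(B_1^c\cap\dots\cap B_k^c))$ via the inclusion $\essb(B_1^c\cap\dots\cap B_k^c)\subset\essb B_1\cup\dots\cup\essb B_k$ from item (vi) of Lemma \ref{lem: essb and essi basic}, whereas you swap the roles of the two sets and obtain $\Per(B_1^c\cap\dots\cap B_{k-1}^c,\essb B_k)=0$ directly from the inductive measure bound together with $\Per(B_i,\essb B_k)=0$, thereby avoiding item (vi) altogether.
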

\begin{proof}
	We argue by induction on $k$. By \eqref{eq:ball perimeters} we have that for every ball $B\subset \X$ of finite perimeter
	\[
	\Per(B^c,.)=\Per(B,.)=\Per(B,.)\restr{B^c},
	\]
 which shows that the statement holds for $k=1$.  Suppose that the statement is true for some $k\in \nn$ and let $B_i\coloneqq B_{r_i}(x_i)\subset \X$, $i=1,...,k+1$ be as in the statement. By a repeated application of $(vi)$ in Lemma \ref{lem: essb and essi basic} we get
	\begin{equation}\label{eq:essb balls}
		\essb (B_1^c\cap B_2^c\cap...\cap B_k^c)\subset \essb B_i\cup ...\cup \essb B_k.
	\end{equation}
Using \eqref{eq:essb balls} and the assumption $\Per(B_{k+1},\essb B_j)=0$ for all $j\neq k+1$ gives
$$\Per(B_{k+1},\essb (B_1^c\cap B_2^c\cap...\cap B_k^c))=0.$$
Hence we can apply Proposition \ref{prop:per intersection} and obtain
\begin{equation}\label{eq:induction step}
\Per(B_1^c\cap B_2^c\cap...\cap B_{k+1}^c,.)\le \Per(B_{k+1},.) \restr{(B_1^c\cap B_2^c\cap...\cap B_k^c)^{(1)}}+\Per(B_1^c\cap B_2^c\cap...\cap B_k^c,.)\restr{(B_{k+1}^c)^{(1)}}.
\end{equation}
Since  balls are open sets, by using $(iii)$ and $(iv)$ in Lemma \ref{lem: essb and essi basic} it holds 
$$(B_1^c\cap B_2^c\cap...\cap B_k^c)^{(1)}\subset B_1^c\cap B_2^c\cap...\cap B_k^c,$$
that combined with \eqref{eq:induction step} and the induction hypothesis gives
\begin{align*}
	\Per(B_1^c\cap B_2^c\cap...\cap B_{k+1}^c,.)&\le \Per(B_{k+1},.) \restr{B_1^c\cap B_2^c\cap...\cap B_k^c}+\Per(B_1^c\cap B_2^c\cap...\cap B_k^c,.)\restr{(B_{k+1}^c)^{(1)}}\\
 &\le \Per(B_{k+1},.) \restr{B_1^c\cap B_2^c\cap...\cap B_k^c}+\Per(B_1^c\cap B_2^c\cap...\cap B_k^c,.)\restr{B_{k+1}^c}\\
	&\le  \Per(B_{k+1},.) \restr{B_1^c\cap B_2^c\cap...\cap B_k^c\cap B_{k+1}^c}+ \sum_{i=1}^k \Per(B_i,.)\restr{B_1^c\cap B_2^c\cap...\cap B_k^c\cap B_{k+1}^c},
\end{align*}
where in the second line we used that $(B_{k+1}^c)^{(1)}\subset B_{k+1}^c$ (recall $(iv)$ in Lemma \ref{lem: essb and essi basic}) and in the last line  that $\Per(B_{k+1},.)=\Per(B_{k+1},.)\restr{B_{k+1}^c}$ (recall \eqref{eq:ball perimeters}) for the first term and the induction hypothesis for the second term. This concludes the proof.
\end{proof}

Combining the above estimate with a covering argument we can prove the following proposition, from which Theorem \ref{thm:almost euclidean for small volumes PI} will easily follow. 
\begin{prop}[From local-to-global isoperimetric inequality]\label{prop:local to global}
		Let $\Xdm$ be a PI space. Suppose there exist constants $\lambda>0$, $\alpha\in(0,1]$ and a compact set ${\sf K}\subset \X$  such that  for all $x \in {\sf K}$ there exists $\rho(x)>0$ so that
		\begin{equation}\label{eq:local isop ass}
		    \Per (E)\ge \lambda \mm(E)^\alpha, \quad \forall \, E \subset B_{\rho(x)}(x)\, \text{ Borel.}
		\end{equation}
		Then there exists a constant $C=C({\sf K},\alpha,\lambda)$ such that 
  \begin{equation}\label{eq:patched isop}
       \Per (V)\ge  \lambda \mm(V\cap {\sf K})^\alpha-C\mm(V), \quad \forall \, V\subset \X \text{ Borel.}
  \end{equation}
\end{prop}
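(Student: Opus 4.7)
The plan is to patch together the local isoperimetric inequalities available near each point of ${\sf K}$ into a single global inequality valid on the whole of ${\sf K}$, modulo a correction term of order $\mm(V)$. First, by compactness of ${\sf K}$, we extract points $x_1,\dots,x_k \in {\sf K}$ with ${\sf K}\subset \bigcup_{i=1}^k B_{\rho(x_i)/2}(x_i)$. We may assume $\Per(V) < +\infty$, since otherwise \eqref{eq:patched isop} is trivial. The radii specifying the cover will be chosen depending on $V$, but in such a way that the final constant $C$ depends only on the fixed points $x_i$ and values $\rho(x_i)$, hence only on ${\sf K},\alpha,\lambda$.

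For each $i$, I would select a radius $r_i\in(\rho(x_i)/2,\rho(x_i))$ such that $B_i \coloneqq B_{r_i}(x_i)$ simultaneously satisfies: (i) $B_i$ has finite perimeter; (ii) $\Per(V,\partial B_i)=0$; (iii) $\Per(B_i,V^{(1)})\le C_0\,\mm(V)/\rho(x_i)$ for some absolute constant $C_0>2$. Item (i) holds for a.e.\ $r$ by Proposition \ref{prop: coarea}; item (ii) holds for a.e.\ $r$ since the spheres $\{y:\sfd(x_i,y)=r\}$ are pairwise disjoint and $\Per(V,\cdot)$ is a finite Borel measure; item (iii) holds for a positive-measure set of $r$ by applying Proposition \ref{prop: coarea} to the Borel set $V^{(1)}$ (noting $\mm(V^{(1)})=\mm(V)$ by Lebesgue density) and Markov's inequality.

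I then define the disjoint pieces $W_1\coloneqq B_1$ and $W_i\coloneqq B_i\setminus\bigcup_{j<i}B_j$ for $i>1$, so that $\bigcup_i W_i\supset{\sf K}$ and each $V\cap W_i\subset B_i\subset B_{\rho(x_i)}(x_i)$. The local hypothesis \eqref{eq:local isop ass} gives $\Per(V\cap W_i)\ge\lambda\,\mm(V\cap W_i)^\alpha$, and summing together with the superadditivity $\sum a_i^\alpha\ge(\sum a_i)^\alpha$ (valid for $\alpha\in(0,1]$ and $a_i\ge 0$, by concavity of $t\mapsto t^\alpha$) yields the lower bound $\sum_i\Per(V\cap W_i)\ge\lambda\,\mm(V\cap{\sf K})^\alpha$. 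For the matching upper bound, condition (ii) combined with Lemma \ref{lem: essb and essi basic}(vi) gives $\Per(V,\essb W_i)=0$, so Proposition \ref{prop:per intersection} applies and yields
\[
\Per(V\cap W_i,\cdot)\le\Per(V,\cdot)\restr{W_i^{(1)}}+\Per(W_i,\cdot)\restr{V^{(1)}}.
\]
Summing, the disjointness of the $W_i^{(1)}$ (Lemma \ref{lem: essb and essi basic}(v)) bounds the first contribution by $\Per(V)$; the basic subadditivity $\Per(A\cap B)\le \Per(A)+\Per(B)$ and $\Per(A^c)=\Per(A)$ give $\Per(W_i,\cdot)\le\sum_{j\le i}\Per(B_j,\cdot)$, and condition (iii) then bounds $\sum_i\Per(W_i,V^{(1)})\le C\,\mm(V)$ with $C\coloneqq C_0 k\sum_{j=1}^k\rho(x_j)^{-1}$. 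Combining produces $\lambda\,\mm(V\cap{\sf K})^\alpha\le\Per(V)+C\,\mm(V)$, which rearranges to \eqref{eq:patched isop}.

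The main obstacle is engineering the $V$-dependent selection of radii so that both the delicate measure-zero hypothesis of Proposition \ref{prop:per intersection} and the quantitative $\mm(V)$-scaled bound on $\Per(B_i,V^{(1)})$ are simultaneously realized. The key subtlety is to apply the coarea inequality to the essential interior $V^{(1)}$ (rather than to $V$ itself), so as to extract precisely the $\mm(V)$ correction term appearing in the final statement.
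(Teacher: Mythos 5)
Your argument is correct and follows the paper's skeleton — the same finite cover of ${\sf K}$ by balls $B_{\rho(x_i)/2}(x_i)$, the same $V$-dependent coarea/Markov selection of radii, the same disjoint decomposition $W_i=B_i\setminus\bigcup_{j<i}B_j$, and the same use of Proposition \ref{prop:per intersection} to split $\Per(V\cap W_i)\le \Per(V,W_i^{(1)})+\Per(W_i,V^{(1)})$ — but it diverges in how the second term is controlled. The paper imposes a fourth radii condition, $\Per(B_{r_i}(x_i),\partial B_{r_j}(x_j))=0$ for $i\neq j$, precisely so that Lemma \ref{lem:ballsper} (via a second application of Proposition \ref{prop:per intersection}) yields the localized bound $\Per(W_i,V^{(1)})\le \Per(B_i,V^{(1)})+\sum_{j<i}\Per(B_j,V^{(1)})\restr{W_i}$, whose sum over $i$ is $\le 2\sum_i\Per(B_i,V^{(1)})\le 6M\mm(V)/\bar\rho$. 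You instead invoke the crude subadditivity $\Per(A\cap B,\cdot)\le\Per(A,\cdot)+\Per(B,\cdot)$ together with $\Per(B_j^c,\cdot)=\Per(B_j,\cdot)$ to get $\Per(W_i,V^{(1)})\le\sum_{j\le i}\Per(B_j,V^{(1)})$, which suffices because condition (iii) already bounds each $\Per(B_j,V^{(1)})$ by a multiple of $\mm(V)/\rho(x_j)$; this dispenses entirely with Lemma \ref{lem:ballsper} and with the extra radii condition, at the price of a larger (but equally admissible) constant of order $k\sum_j\rho(x_j)^{-1}$ instead of $6M/\bar\rho$. The only point you should make explicit is the subadditivity itself as an inequality of Borel measures (you use it evaluated at $V^{(1)}$, which is neither open nor closed): it is not stated in the paper, but it follows in one line from the relaxation definition \eqref{def: per} — $\lip(f\wedge g)\le\lip(f)+\lip(g)$ since $f\wedge g=\tfrac12(f+g-|f-g|)$ — giving the inequality on open sets, and then on all Borel sets by the outer-regular extension of the (finite) perimeter measures; this also guarantees that each $W_i$ has finite perimeter, as needed to apply Proposition \ref{prop:per intersection}. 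With that remark added, your proof is complete and in fact slightly leaner than the paper's.
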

\begin{proof}
	We start by extracting once and for all a finite covering ${\sf K}\subset \cup_{i=1}^M B_{\frac{\rho(x_i)}2}(x_i)$, with $x_i \in {\sf K}$, and we set $\overline\rho \coloneqq \min_{i} \rho(x_i)>0$.  It is enough to prove \eqref{eq:patched isop} for sets $V$ of finite perimeter. Fix one such set $V.$
	
	 We claim that  there exist $r_1,\dots,r_M$, with  $r_i \in \left(\frac{\rho(x_i)}2,\rho(x_i)\right)$ such that the following hold: 
	\begin{enumerate}[label=\alph*)]
		\item\label{it:finite per} $B_{r_i}(x_i)$ has finite perimeter, for every $i=1,...,M,$
		\item\label{it:no inters} $\Per (V,\partial B_{r_i}(x_i))=0$, for every $i=1,...,M,$
  \item\label{it:good balls2} $\Per (B_{r_i}(x_i),V^{(1)})\le \frac{3\mea(V)}{\rho(x_i)}$, for every $i=1,...,M$,
		\item\label{it:good balls} $\Per (B_{r_i}(x_i),\partial B_{r_j}(x_j))=0$, for every $i,j=1,...,M$ with $i \neq j.$
	\end{enumerate}
It is sufficient to prove that:
\begin{enumerate}[label=(\roman*)]
		\item for any $i\in\{1,...,M\}$, there exists $A_i\subset \left(\frac{\rho(x_i)}2,\rho(x_i)\right)$ with  $\mathcal{H}^1(A_i)>0$ such that \ref{it:finite per},\ref{it:no inters} and \ref{it:good balls2} holds for every $r_i \in A_i$,
		\item for every $r_i>0$ such that $B_{r_i}(x_i)$ has finite perimeter, \ref{it:good balls} holds for every $j\neq i$  and for a.e.\ $r_j>0$.
	\end{enumerate}
 Indeed if these were true, up to removing from each $A_i$ a set of measure zero, we would have that  every choice $(r_1,\dots,r_M)\in A_i\times\dots,\times A_M$ satisfies all \ref{it:finite per},\ref{it:no inters}, \ref{it:good balls2} and \ref{it:good balls}. We start proving (i).
Fix  $i\in\{1,...,M\}$. From Proposition \ref{prop: coarea} we have that $B_r(x_i)$ has finite perimeter for a.e.\ $r>0$ (i.e.\ \ref{it:finite per}  holds for a.e.\ $r_i>0$). Moreover since $\Per(V,.)$ is a finite measure and $\{\partial B_{r}(x_i)\}_{r>0}$ are pairwise disjoint sets, we must have that $ \Per (V,\partial B_{r_i}(x_i))=0$ for a.e.\ $r_i>0$ (i.e.\ \ref{it:no inters}  holds for a.e.\ $r_i>0$). By applying Proposition \ref{prop: coarea} with $R\coloneqq \rho(x_i)$ we get
\[
\int_{\frac{\rho(x_i)}2}^{\rho(x_i)}\Per (B_{r}(x_i),V^{(1)})\, \d r\le \int_{0}^{\rho(x_i)}\Per (B_{r}(x_i),V^{(1)})\, \d r
\le  \mm(B_{\rho(x_i)}(x_i)\cap V^{(1)})=\mm(B_{\rho(x_i)}(x_i)\cap V),
\]
and by the Markov inequality  
$$\mathcal{H}^1\left (\left\{ r \in \left(\frac{\rho(x_i)}2,\rho(x_i)\right) : \  \Per (B_{r}(x_i),V^{(1)})> 3\mea(V\cap B_{\rho(x_i)}(x_i))/\rho(x_i) \right\}\right)\le \frac{\rho(x_i)}3,$$ which shows that the set 
$$\left\{ r \in \left(\frac{\rho(x_i)}2,\rho(x_i)\right) : \  \Per (B_{r}(x_i),V^{(1)})\le 3\mm(V\cap B_{\rho(x_i)}(x_i))/\rho(x_i) \right\}$$ 
has positive $\mathcal{H}^1$-measure, i.e.\ \ref{it:good balls2}  holds for every $r_i$ in a subset of $\left(\frac{\rho(x_i)}2,\rho(x_i)\right)$ of positive $\mathcal{H}^1$-measure. Combining all the above observations gives (i).
To show (ii) fix $i,j \in \{1,...,M\}$, $i \neq j$ and  $r_i>0$ such that  $B_{r_i}(x_i)$ has finite perimeter. As above  $\{\partial B_{r}(x_j)\}_{r>0}$ are pairwise disjoint sets, therefore we must have $\Per (B_{r_i}(x_i),\partial B_{r}(x_j))=0$ for a.e.\ $r>0.$ This proves (ii) and completes the proof of the claim.  

From now on we assume to have fixed $r_1,\dots r_M$ such that \ref{it:finite per},  \ref{it:no inters}, \ref{it:good balls2} and  \ref{it:good balls} above hold (note that this choice might depend on the set $V$) and we set $B_i\coloneqq B_{r_i}(x_i).$ By construction $K \subset \cup_{i=1}^M B_i.$  Consider the pairwise disjoints sets $\{U_i\}_{i=1}^M$ defined inductively as follows:

\[
U_1\coloneqq B_1, \quad  U_i\coloneqq B_i\cap (B_{i-1}^c\cap...\cap B_1^c), \quad \forall \, i=2,...,M.
\]
Clearly $\{U_i\}_{i=1}^M$ is a family of disjoint Borel sets which is a covering of $K$. We claim that 
\begin{equation}\label{eq:good covering}
	\Per(V,\essb U_i)=0, \quad \forall \, i=1,...,M.
\end{equation}
Indeed, from $(vi)$ and $(ii)$ of Lemma \ref{lem: essb and essi basic} one infers that 
\begin{equation*}
    \essb U_i \subset \essb B_1\cup \essb (B_2^{c})\cup...\cup \essb (B_{i-1}^c)\subset  \partial B_1\cup...\cup \partial B_{i-1}.
\end{equation*}
 From this \eqref{eq:good covering} follows recalling \ref{it:no inters}. Thanks to \eqref{eq:good covering} we are in position to apply Proposition \ref{prop:per intersection} to deduce that
\begin{equation}\label{per:v int ui}
	\Per(V\cap U_i)\le \Per (V,U_i^{(1)})+\Per (U_i,V^{(1)}), \quad \forall \, i=1,...,M.
\end{equation}
The goal is now to give an upper bound on each term on the right-hand side of \eqref{per:v int ui}.
Since $U_i$ are pairwise disjoint by construction, by point $(v)$ of Lemma \ref{lem: essb and essi basic} it follows that also the sets $U_i^{(1)}$ are pairwise disjoint, hence
\begin{equation}\label{eq:Ui interior disjoint}
	\sum_{i=1}^M\Per (V,U_i^{(1)})\le \Per (V).
\end{equation}
To estimate $	\Per (U_i,V^{(1)})$ we note that from \ref{it:good balls} it holds  
$$\Per(B_i,\essb (B_1^c\cap...\cap B_{i-1}^c))=0,$$ indeed  $\essb (B_1^c\cap...\cap B_{i-1}^c)\subset \partial B_1\cup...\cup \partial B_{i-1}$ (recall $(ii)$ in Lemma \ref{lem: essb and essi basic}). Hence, recalling that by construction $U_i=B_i\cap (B_{i-1}^c\cap...\cap B_1^c)$, we can apply again Proposition \ref{prop:per intersection} to get
\[
\Per(U_i,V^{(1)})\le 	\Per (B_i,V^{(1)})+ \Per(B_1^c\cap...\cap B_{i-1}^c,V^{(1)})\restr{B_i^{(1)}}.
\]
From this and Lemma \ref{lem:ballsper}
\begin{align*}
		 \Per(U_i,V^{(1)}) &\overset{\eqref{eq:ballsper}}{\le}  \Per (B_i,V^{(1)})+  \sum_{j=1}^{i-1} \Per (B_j,V^{(1)})\restr{B_i^{(1)}\cap B_1^c\cap...\cap B_{i-1}^c}\\
			&\le   \Per (B_i,V^{(1)})+  \sum_{j=1}^{i-1} \Per (B_j,V^{(1)})\restr{B_i\cap B_1^c\cap...\cap B_{i-1}^c}\\
			&=  \Per (B_i,V^{(1)})+  \sum_{j=1}^{i-1} \Per (B_j,V^{(1)})\restr{U_i},
\end{align*}
where in the second line we used that $B_i^{(1)}\subset B_i\cup \partial B_i$ and that $\Per(B_j,\partial B_i)=0$ for all  $j\neq i$, thanks to \ref{it:good balls}.
Summing in $i$ and recalling that $U_i$ are disjoint 
\begin{equation}\label{eq:per ui sum}
	\begin{split}
		 \sum_{i=1}^M\Per(U_i,V^{(1)})&\le \sum_{i=1}^M\Per (B_i,V^{(1)})+   \sum_{i=1}^M\sum_{j=1}^{i-1} \Per (B_j,V^{(1)})\restr{U_i}\\
		 &\le  \sum_{i=1}^M\Per (B_i,V^{(1)})+   \sum_{j=1}^{M} \sum_{i=1}^M \Per (B_j,V^{(1)})\restr{U_i}\\
		 &\le 2 \sum_{i=1}^M\Per (B_i,V^{(1)})\overset{\ref{it:good balls2}}{\le}  6\sum_{i=1}^M \frac{\mm(V)}{\rho(x_i)}\le 6M \frac{\mm(V)}{\bar \rho}.	\end{split}
\end{equation}
Combining \eqref{eq:per ui sum}, \eqref{eq:Ui interior disjoint} and \eqref{per:v int ui} we get
\begin{equation}\label{eq:upper bound per}
		\sum_{i=1}^M \Per(V\cap U_i)\le   \Per (V)+ 6M \frac{\mm(V)}{\bar \rho}.
\end{equation}
On the other hand $V\cap U_i\subset B_i\subset B_{\rho(x_i)}(x_i)$, hence by assumption \eqref{eq:local isop ass}
\[
\sum_{i=1}^M \Per(V\cap U_i)\ge \lambda \sum_{i=1}^M \mm(V \cap U_i)^{\alpha}\ge \lambda  \left(\sum_{i=1}^M \mm(V \cap U_i)\right)^{\alpha}\ge \lambda \mm(V\cap {\sf K})^\alpha,
\]
since the function $x\mapsto x^{\alpha}$ is subadditive and the sets $\{U_i\}_{i=1}^M$ cover ${\sf K}$. This combined with \eqref{eq:upper bound per} yields
\[
 \Per (V)\ge  \lambda \mm(V\cap {\sf K})^\alpha-6M \frac{\mm(V)}{\bar \rho}.
\]
The constants $M$ and $\bar \rho$ depend only on the initial choice of the covering $B_{\frac{\rho(x_i)}2}(x_i)$ and thus depend only on $K,$ $\alpha$ and $\lambda,$ (and not on $V$). This concludes the proof.
\end{proof}

An application of  Proposition \ref{prop:local to global} yields immediately the main result of this section.
\begin{proof}[Proof of Theorem \ref{thm:almost euclidean for small volumes PI}]
Fix $\eps\in(0,1)$ and $\eta>0.$ Let $C_{\eps/2}\subset \X$ be as in the statement.
Since $C_{\eps/2}$ is closed, by upper regularity there exists an open set $U_{\eps,\eta}$ containing $C_{\eps/2}$ and such that $\mm(U_{\eps,\eta})<\eta.$ 
Set ${\sf K}_{\eps,\eta}\coloneqq \X\setminus U_{\eps,\eta}$, which is compact (because it is closed and bounded and $\X$ is a proper, being a PI space). In particular \eqref{eq:local eucl pi} holds also for every $x \in K_{\eps,\eta}$. Therefore the hypotheses of Proposition \ref{prop:local to global} are satisfied with $K=K_{\eps,\eta}$, $\alpha=\frac{N-1}{N}$ and $\lambda=(1-\eps)N\omega_N^{\frac1N}$ and we deduce that there exists a constant $C$ depending only on $K_{\eps,\eta}, N$ and $\eps$ (and thus only  on $\X,$ $\eps$, $N$ and $\eta$) such that 
\begin{align*}
		\Per (E)&\ge  (1-\eps/2)N\omega_N^{\frac1N}\mm(E\cap {\sf K_{\eps,\eta}})^{\frac{N-1}{N}}-C\mm(E)\\
		&=  (1-\eps/2)N\omega_N^{\frac1N}\mm(E)^{\frac{N-1}{N}} \left[\left(1-\frac{\mea(E\cap U_{\eps,\eta})}{\mea(E)}\right)^{\frac{N-1}{N}}-\hat{C}\mm(E)^{\frac1N}\right]\\
  &\ge (1-\eps/2)N\omega_N^{\frac1N}\mm(E)^{\frac{N-1}{N}} \left[1-\frac{\mea(E\cap U_{\eps,\eta})}{\mea(E)}-\hat{C}\mm(E)^{\frac1N}\right] , \quad \forall E\subset \X \text{ Borel,}
	\end{align*}
where $\hat C=C((1-\eps/2)N\omega_N^{\frac1N})^{-1}$.
From this we obtain that the conclusion of theorem holds  taking $	 \beta\coloneqq \hat C^{-N}\delta_\eps^N$ and $\beta'\coloneqq \delta_\eps,$ where $\delta_\eps\coloneqq \frac12 \frac{\eps}{2-\eps}.$
\end{proof}

\section{Almost Euclidean Faber-Krahn inequality for small volumes}
Similarly to the classical Faber-Krahn inequality in $\rr^N$, combining   the almost Euclidean isoperimetric inequality for small volumes in Section \ref{sec:isop} and the P\'olya-Szeg\H{o} inequality of Section \ref{sec:ps}, we deduce here  an  almost-Euclidean Faber-Krahn inequality for small volumes similar to \cite[Lemme 16]{BM82} in the case of Riemannian manifolds. However it is not possible to apply directly the P\'olya-Szeg\H{o} inequality as in \cite{BM82}, because our isoperimetric inequality applies only to sets that have small volumes \emph{and avoid} a bad set with small measure.
This technical difficulty will require a more careful argument, which will eventually lead to a Faber-Krahn inequality that  applies only to sets that again avoid a portion of the space with small measure (see Theorem \ref{thm:faber}).

We start by recalling the well known expression of the first Dirichlet eigenvalue of a ball  in the $N$-dimensional Euclidean space (see e.g. \cite{BM82}):
\begin{equation}\label{eq:lambda eucl ball}
\lambda_1(B_r^{\rr^N}(0))=\left(\frac{\omega_N}{\Leb^N(B_r^{\rr^N}(0))}\right)^{2/N}j^2_{\frac{(N-2)}{N}}, \qquad\forall \, r>0,
\end{equation}
where $j_{\frac{(N-2)}{N}}$ denotes the first positive zero of the Bessel function (of the first kind) of index $\frac{(N-2)}{N}$. 

Next we obtain a weaker Faber-Krahn inequality, i.e.\ with a rough constant, but that applies to sets also with large volume.
\begin{prop}[Faber-Krahn inequality in PI spaces]\label{prop:Faber Kran PI}
	Let $\Xdm$ be a bounded PI space satisfying for some $N\in \nn$ and some constant $c>0$
	\[
	\frac{\mea(B_r(x))}{\mea(B_R(x))}\ge c\left(\frac{r}{R}\right)^N, \quad \forall\, x \in \X,\, \forall\, 0<r<R.
	\]
	Then there exist constants $v_0=v_0(\X)>0$ and $C=C(\X,N)>0$ such that
	\begin{equation}\label{eq:general FK}
	\lambda_1(\Omega)\ge \frac{C}{\mea(\Omega)^\frac 2N}, \quad \forall\, \Omega\subset \X \text{ open, } 0<\mea(\Omega)\le v_0.
	\end{equation}
\end{prop}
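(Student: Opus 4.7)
\medskip

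\textbf{Plan of proof.} The strategy is the classical chain: local isoperimetric inequality $\Rightarrow$ P\'olya-Szeg\H{o} rearrangement $\Rightarrow$ reduce to Euclidean Faber-Krahn on a ball. Under the hypothesis, the dimension-doubling bound on the volume of balls is exactly the assumption of Proposition~\ref{prop: class isop} (with $s=N\ge 2$), which supplies constants $w_0=w_0(\X)>0$ and $C_I=C_I(\X,N)>0$ such that
\[
\Per(E)\ge C_I\, \mea(E)^{\frac{N-1}{N}}\quad\text{for every Borel $E\subset\X$ with $\mea(E)\le w_0$.}
\]
The natural choice is therefore $v_0\coloneqq w_0$. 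Since $w_0\le \mea(\X)/2$ by the proof of Proposition~\ref{prop: class isop}, any open $\Omega\subset \X$ with $0<\mea(\Omega)\le v_0$ automatically satisfies $\Omega\subsetneq \X$, and every Borel $E\subset\Omega$ obeys the isoperimetric inequality above with the \emph{same} constant $C_I$.

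Next I would invoke the Euclidean P\'olya-Szeg\H{o} inequality (Theorem~\ref{thm:eu_polyaszego}) with $\tilde C=C_I$ applied to the domain $\Omega$. Given an arbitrary $u\in \W_0(\Omega)$, I may replace $u$ by $|u|$, which keeps the $L^2$-norm and, by the chain rule \eqref{eq:loc and leib}, the Dirichlet energy unchanged; thus I reduce to non-negative competitors. Then $u^*_N\in \W_0(\Omega^*)$, where $\Omega^*\subset \rr^N$ is the Euclidean ball with $\Leb^N(\Omega^*)=\mea(\Omega)$, and \eqref{eq:euclpolya} yields
\[
\int_\Omega |Du|^2\,\d\mea\;\ge\;\Big(\tfrac{C_I}{N\omega_N^{1/N}}\Big)^{\!2}\int_{\Omega^*}|Du^*_N|^2\,\d\Leb^N .
\]
By equimeasurability of $u$ and $u^*_N$, one also has $\int_\Omega u^2\,\d\mea=\int_{\Omega^*}(u^*_N)^2\,\d\Leb^N$.

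Finally, I would combine the previous two displays with the variational characterization \eqref{char: lambda1} applied in $\rr^N$ to the ball $\Omega^*$: since $u^*_N\in \W_0(\Omega^*)$ is admissible, one gets
\[
\frac{\int_{\Omega^*}|Du^*_N|^2\,\d\Leb^N}{\int_{\Omega^*}(u^*_N)^2\,\d\Leb^N}\;\ge\;\lambda_1(\Omega^*)
\;=\;\bigg(\frac{\omega_N}{\mea(\Omega)}\bigg)^{\!2/N}j_{(N-2)/N}^{\,2},
\]
where the last equality is the explicit Euclidean value \eqref{eq:lambda eucl ball} (consistent with the metric definition thanks to Remark~\ref{rmk:eucl lapl}). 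Taking the infimum over $u\in \W_0(\Omega)\setminus\{0\}$ yields
\[
\lambda_1(\Omega)\;\ge\;\Big(\tfrac{C_I}{N}\Big)^{\!2} j_{(N-2)/N}^{\,2}\;\mea(\Omega)^{-2/N},
\]
so the statement holds with $C=C(\X,N)\coloneqq (C_I/N)^2\, j_{(N-2)/N}^{\,2}$ and $v_0=w_0(\X)$.

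There is no real obstacle in this argument; the only point requiring care is the compatibility check for Theorem~\ref{thm:eu_polyaszego} (namely $\Omega\subsetneq \X$, $N\ge 2$, and the isoperimetric inequality on \emph{all} Borel subsets of $\Omega$). Each of these is automatic from the size condition $\mea(\Omega)\le v_0$ and from the dimension hypothesis.
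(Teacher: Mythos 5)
Your argument is correct and follows essentially the same route as the paper's proof: apply Proposition \ref{prop: class isop} with $v_0=w_0$ to get the isoperimetric inequality on all Borel subsets of $\Omega$, feed it into the Euclidean P\'olya-Szeg\H{o} inequality \eqref{eq:euclpolya}, and conclude via equimeasurability, the explicit value \eqref{eq:lambda eucl ball} for $\lambda_1(\Omega^*)$, and the characterization \eqref{char: lambda1}. The only cosmetic difference is how you rule out $\Omega=\X$ (via $w_0\le\mea(\X)/2$ instead of shrinking $v_0$ below $\mea(\X)$), which is immaterial.
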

\begin{proof}
	By Proposition \ref{prop: class isop} there exist  constants $w_0=w_0(\X)>0$ and $C_I=C_I(\X,N)>0$ such that
	\begin{equation}\label{eq:isop in proof fk}
	\Per(E) \ge C_I \mm(E)^{\frac{N-1}N}, \qquad \forall\,   E \subset \X \text{ Borel  such that $\mea(E)\le w_0$}
	\end{equation}
	In particular if $\mea(\Omega)\le v_0\coloneqq w_0$, then \eqref{eq:isop in proof fk} holds for every $E\subset \Omega$ Borel. Moreover taking  $v_0<\mea(\X)$ we have $\Omega \neq \X.$
	Therefore we can apply the P\'olya-Szeg\H{o} inequality \eqref{eq:euclpolya} and deduce 
	\begin{equation}
	\frac{\int_{\Omega}|{D u}|^2\d \mm}{\int_{\Omega}|u|^2\d \mm}\ge \Big(\frac{C_I}{N\omega_N^{1/N}}\Big)^2\frac{\int_{\Omega^*}|{D u_{N}^*}|^2\d \Leb^N}{\int_{\Omega^*}|u_N^*|^2\d \Leb^N}\ge  \frac{C}{\mea(\Omega)^{\frac{2}{N}}}, \quad \forall u \in \W_0(\Omega),\,u\ge 0,\, u\not \equiv 0,
	\end{equation}
	where we have also used that $\|u\|_{L^2(\Omega,\mm)}=\|u_N^*\|_{L^2(\Omega^*,\Leb^N)}$, $\mm(\Omega)=\Leb^N(\Omega^*)$, and in the last passage the identity \eqref{eq:lambda eucl ball} and the fact that $u_N^*\in \W_0(\Omega^*)$. By taking the infimum with respect to all the possible $u$ in the characterization \eqref{char: lambda1}, we get the result.
\end{proof}

From the previous proposition we can deduce the following version of the Faber-Krahn in the Euclidean setting. Even if it will be not  used in this note, we think it is worth to be isolated in a separate statement.  Indeed it has been pointed out repeatedly in the previous literature that one of the major difficulties in counting nodal domains for non-Dirichlet boundary condition  in subset of $\rr^N$ with irregular boundary is the absence of a  suitable Faber-Krahn inequality for subdomains close to the boundary (see e.g.\ discussions in \cite[Section 2]{HaSh23}, \cite[Section 1.2]{Le19}, \cite[Section 1.2]{GiLe20}, \cite[Section 1.1]{BCM23}). This was one of the main issues faced \cite{Le19} which also forced the assumption of a $C^{1,1}$ boundary  (see also \cite{HaSh23}).
Here we show precisely that a Faber-Krahn-type inequality does hold in any uniform domain, no matter how close is the support of the function to the boundary.
\begin{cor}[Faber-Krahn inequality for uniform domains]\label{cor:FKeucl}
    Let $\Omega \subset \rr^N$ be a uniform domain. Then there exist  constants $v_0 \in (0,\Leb^N(\Omega))$ and $C>0$, depending only on $\Omega,$ such that 
    \[
    \frac{\int_{\Omega} |\nabla u|^2\d \Leb^N }{\int_{\Omega} u^2\d \Leb^N}\ge \frac{C}{(\Leb^N(\supp(u)))^{\frac 2N}}, \quad \forall u \in \W(\Omega), u\not\equiv 0, \ \text{such that } \Leb^N(\supp(u))\le v_0. 
    \]
\end{cor}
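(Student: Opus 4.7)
The plan is to deduce this from the PI-space Faber--Krahn inequality of Proposition \ref{prop:Faber Kran PI} applied to $\overline{\Omega}$ viewed as a metric measure space. The key conceptual point is that once we pass to the intrinsic space $\X := (\overline{\Omega}, |\cdot|\restr{\overline{\Omega}}, \Leb^N\restr{\overline{\Omega}})$, the boundary $\partial\Omega$ becomes part of the ``interior'' of the ambient space, so any $u\in \W(\Omega)$ with small essential support in $\rr^N$ can be fit inside a small open subset of $\X$ on which it carries zero intrinsic Dirichlet data --- even though $u$ need not vanish near $\partial\Omega$ in $\rr^N$.

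First I would invoke Theorem \ref{th:almost-iso}, applicable since $(\rr^N,|\cdot|,\hau^N)$ is a non-collapsed $\RCD(0,N)$ space by Remark \ref{rmk:rcd ok}, to conclude that $\X$ is an infinitesimally Hilbertian PI space satisfying the $N$-dimensional measure-doubling estimate in item~\ref{it:techdim}. Proposition \ref{prop:Faber Kran PI} then yields constants $v_0>0$ and $C>0$, depending only on $\Omega$ and $N$, such that every open subset $V\subset \X$ with $0<\Leb^N(V)\le v_0$ has intrinsic first Dirichlet eigenvalue
\[
\lambda_1(V) \ge \frac{C}{\Leb^N(V)^{2/N}}.
\]

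Next, for $u\in \W(\Omega)$, $u\not\equiv 0$, with $K := \supp(u)$ satisfying $0<\Leb^N(K)\le v_0/2$, I would lift $u$ to $\tilde u \in \W(\X)$ via the isometry of Theorem \ref{thm:sobolev compatibility}. Using outer regularity of $\Leb^N$ on $\rr^N$, I pick an open set $W\subset \rr^N$ with $K\subset W$ and $\Leb^N(W)\le 2\Leb^N(K)\le v_0$, and set $V:=W\cap \overline{\Omega}$, which is open in $\X$ with $\Leb^N(V)\le v_0$. Since $K$ is a closed subset of the bounded set $\overline{\Omega}$, it is compact, so $\sfd(K, \rr^N\setminus W)>0$, and therefore $\sfd(\supp(\tilde u), \X\setminus V)>0$. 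Because Lipschitz functions are dense in $\W(\X)$ (a PI space), inclusion \eqref{eq:support inside} gives $\tilde u \in \W_0(V)$.

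Finally, combining these ingredients with the characterization \eqref{char: lambda1} of $\lambda_1(V)$, and using $\Leb^N(\partial\Omega)=0$ (Lemma \ref{lem:zero boundary}) together with Theorem \ref{thm:sobolev compatibility} to identify the Dirichlet integrals on $\X$ and $\Omega$, I obtain
\[
\frac{\int_{\Omega}|\nabla u|^2\,\d\Leb^N}{\int_{\Omega} u^2\,\d\Leb^N}
= \frac{\int_{\X}|D\tilde u|^2\,\d\Leb^N}{\int_{\X} \tilde u^2\,\d\Leb^N}
\ge \lambda_1(V)
\ge \frac{C}{\Leb^N(V)^{2/N}}
\ge \frac{C\cdot 2^{-2/N}}{\Leb^N(\supp(u))^{2/N}},
\]
yielding the stated inequality with constants $v_0/2$ and $C\cdot 2^{-2/N}$. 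No genuinely delicate step is expected; the only point requiring care is that the essential support of $\tilde u$ in $\X$ agrees with $\supp(u)$ up to a $\Leb^N$-null set, which follows at once from $\Leb^N(\partial\Omega)=0$.
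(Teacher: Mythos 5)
Your proposal is correct and follows essentially the same route as the paper: view $\overline\Omega$ as a PI metric measure space via Theorem \ref{th:almost-iso}, lift $u$ through the isometry of Theorem \ref{thm:sobolev compatibility}, place $\tilde u$ in $\W_0$ of a small open neighbourhood of its support using \eqref{eq:support inside} and $\Leb^N(\partial\Omega)=0$, and invoke Proposition \ref{prop:Faber Kran PI}. The only (harmless) difference is that you build the neighbourhood by outer regularity, losing a factor $2^{2/N}$ and halving $v_0$, whereas the paper uses the $\eps$-enlargements $(\supp\tilde u)^\eps$ and lets $\eps\to 0$ to keep the constant in terms of $\Leb^N(\supp(u))$ itself; note also that what you actually need (and what does hold, by minimality of the essential support) is the set inclusion $\supp_\X(\tilde u)\subset\supp(u)$, not merely agreement up to a null set.
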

\begin{proof}
    Let $u\in \W(\Omega)$. Then by Theorem \ref{thm:sobolev compatibility} there exists $\tilde u \in L^2(\overline \Omega,\Leb^N)$ such that $\tilde u =u$ $\Leb^N$-a.e.\ in $\Omega$ and such that $\tilde u\in \W(\overline \Omega,\sfdomega,\Leb^N\restr{\overline \Omega})$  where $\sfd$ denotes the Euclidean distance (recall also Remark \ref{rmk:eucl lapl}). Moreover by Theorem \ref{th:almost-iso} the m.m.s.\ $\Xdm\coloneqq (\overline \Omega,\sfdomega,\Leb^N\restr{\overline \Omega})$  satisfies the hypotheses of Proposition \ref{prop:Faber Kran PI}.  Finally by Lemma \ref{lem:zero boundary} we have $\Leb^N(\partial \Omega)=0.$  Let $v_0<\mea(\X)=\Leb^N(\Omega)$ and $C>0$ be the constants given by Proposition \ref{prop:Faber Kran PI}, which depend only on $\Omega.$ 
     Consider the open set $U_\eps\coloneqq (\supp(\tilde u))^\eps$, $\eps>0$. Then $\Leb^N(U_{\eps})\to \Leb^N(\supp (\tilde u))=\Leb^N(\supp( u))$ as $\eps \to 0,$ having also used that $\Leb^N(\partial \Omega)=0.$ Hence assuming that  $\Leb^N(\supp( u))<v_0(\X)$ we have that $\mm(U_{\eps})\le v_0$ for $\eps$ small enough. Clearly $\tilde u \in \W_0(U_{\eps})$ (recall \eqref{eq:support inside}), hence we can apply \eqref{eq:general FK} and obtain
     \[
     \frac{\int_{\overline \Omega} |\nabla \tilde u|^2\d \Leb^N }{\int_{\overline \Omega} \tilde u^2\d \Leb^N}\ge \frac{C}{\mea(U_\eps)^\frac 2n}.
     \]
     Letting $\eps\to 0$ and recalling that $\int_{\overline \Omega} |\nabla \tilde u|^2\d \Leb^N=\int_\Omega |\nabla u|^2\d \Leb^N$ concludes the proof. 
\end{proof}

We pass to the statement of our main  Faber-Krahn inequality for small volumes.
\begin{theorem}[Almost Euclidean Faber-Krahn inequality for small volumes]\label{thm:faber}
	Let $\Xdm$ be a bounded PI space satisfying for some $N\in \nn$ and some constant $c>0$
	\[
	\frac{\mea(B_r(x))}{\mea(B_R(x))}\ge c\left(\frac{r}{R}\right)^N, \quad \forall\, x \in \X,\, \forall 0<r<R.
	\]
	Suppose that for every $\eps>0$ there exists a closed set $C_\eps\subset \X$ with $\mea(C_\eps)=0$ such that for every $x \in \X\setminus C_\eps$ there exists a constant $\rho=\rho(x,N,\eps)>0$  satisfying
	\begin{equation}\label{eq:local eucl pi in fk}
	\Per(E)\ge(1-\eps)N\omega_N^{\frac1N}\mea(E)^{\frac{N-1}{N}}, \quad \forall \, E \subset B_{\rho}(x)\, \text{ Borel,}
	\end{equation} Then for every $\eps\in(0,1)$ and $\eta>0$ there exists an open set $U_{\eps,\eta}$ with $\mm(U_{\eps,\eta})<\eta$ and constants $\delta=\delta(   \X, \eps,N,\eta)>0$, $\delta'=\delta'(\X,\eps,N)>0$ such that
	for every $\Omega\subset \X$ open satisfying
	\[
	\mea(\Omega)\le \delta, \quad \frac{\mea(\Omega\cap U_{\eps,\eta})}{\mea(\Omega)}\le \delta',
	\]
and, denoted by $\Omega^*\coloneqq B_r(0)\subset \rr^N$ the ball satisfying $\mm(\Omega)=\Leb^N(\Omega^*)$, it holds
 \begin{equation}\label{eq:faber}
	\lambda_1(\Omega)\ge (1-\eps){\lambda_1}(\Omega^*).
	\end{equation}
\end{theorem}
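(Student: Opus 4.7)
The plan is to combine the refined Pólya--Szegő rearrangement inequality \eqref{eq:improved polya} with a two-scale analysis of the superlevel sets $\{u>t\}$ of an admissible test function: the almost-Euclidean isoperimetric inequality from Theorem \ref{thm:almost euclidean for small volumes PI} governs the superlevel sets that are macroscopic enough to control their overlap with the bad region, while the crude global isoperimetric inequality from Proposition \ref{prop: class isop} is used for the remaining tail of very small superlevel sets. After rearrangement, the two pieces are controlled by applying the classical Euclidean Faber--Krahn inequality \eqref{eq:lambda eucl ball} to a truncation $u_L$ of $u_N^*$ in $\Omega^*$ and to its top part $u_U$ supported in a substantially smaller ball.

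\textbf{Choice of parameters.} Fix auxiliary numbers $\eps_1\in(0,\eps)$ and $\theta>0$, depending only on $\eps$, with $(1-\eps_1)^2\ge (1-\eps)(1+\theta)$. Apply Theorem \ref{thm:almost euclidean for small volumes PI} with parameters $(\eps_1,\eta)$ to obtain the open set $U_{\eps,\eta}:=U_{\eps_1,\eta}$ of measure $<\eta$ and constants $\beta=\beta(\X,\eps_1,N,\eta)>0$, $\beta'=\beta'(\eps_1)>0$. From Proposition \ref{prop: class isop} record $v_0=v_0(\X)>0$ and $C_I=C_I(\X,N)>0$, and set $c_0:=C_I/(N\omega_N^{1/N})$. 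Finally fix $\delta\le\min(\beta,v_0)$ and choose $\delta'=\delta'(\X,\eps,N)>0$ small (quantified at the end).

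\textbf{Main estimate.} Let $\Omega$ satisfy $\mea(\Omega)\le\delta$ and $\mea(\Omega\cap U_{\eps,\eta})\le\delta'\mea(\Omega)$, and pick $u\in\LIP_c(\Omega)$, $u\ge 0$, non-trivial, with $\lip(u)\neq 0$ $\mea$-a.e.\ on $\{u>0\}$; by \eqref{eq:non-van lambda1} such functions suffice to compute $\lambda_1(\Omega)$. Since $\mea(\Omega)\le v_0$, Proposition \ref{prop: class isop} furnishes the global isoperimetric hypothesis of Theorem \ref{thm:eu_polyaszego} on $\Omega$ with $\tilde C=C_I$, hence $u_N^*\in\LIP_c(\Omega^*)$ and \eqref{eq:improved polya} holds with $s=\max u$. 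Define $s_0\in[0,\max u]$ by $\mu(s_0)=(\delta'/\beta')\mea(\Omega)$, with $s_0:=0$ if no such level exists. For $t<s_0$ we have $\mu(t)>(\delta'/\beta')\mea(\Omega)$, so $\mea(\{u>t\}\cap U_{\eps,\eta})/\mu(t)\le\beta'$ and Theorem \ref{thm:almost euclidean for small volumes PI} yields
\[
R(t):=\frac{\Per(\{u>t\})}{N\omega_N^{1/N}\mu(t)^{(N-1)/N}}\ge 1-\eps_1;
\]
for $t\ge s_0$ Proposition \ref{prop: class isop} provides only $R(t)\ge c_0$. Splitting the $t$-integral in \eqref{eq:improved polya} at $s_0$ and using the classical Euclidean co-area formula on $u_N^*$ to identify
\[
\int_0^{s_0}\!\int_{\rr^N}|Du_N^*|\,\d\Per(\{u_N^*>t\})\,dt=\int_{\Omega^*}|Du_L|^2\d\Leb^N,\qquad \int_{s_0}^{\max u}\!\!\cdots\,dt=\int_{\Omega^*}|Du_U|^2\d\Leb^N,
\]
with $u_L:=\min(u_N^*,s_0)\in\W_0(\Omega^*)$ and $u_U:=(u_N^*-s_0)_+\in\W_0(B_U)$ for the Euclidean ball $B_U:=\{u_N^*>s_0\}$ of measure $(\delta'/\beta')\mea(\Omega)$, we obtain
\[
\int_\Omega|Du|^2\d\mea\ge (1-\eps_1)^2\int_{\Omega^*}|Du_L|^2\d\Leb^N+c_0^2\int_{\Omega^*}|Du_U|^2\d\Leb^N.
\]

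\textbf{Conclusion and main obstacle.} Classical Euclidean Faber--Krahn and \eqref{eq:lambda eucl ball} give $\int|Du_L|^2\ge\lambda_1(\Omega^*)\int u_L^2$ and $\int|Du_U|^2\ge\lambda_1(\Omega^*)(\beta'/\delta')^{2/N}\int u_U^2$, while a Young-type splitting yields $\int u^2=\int u_N^{*2}\le(1+\theta)\int u_L^2+(1+\theta^{-1})\int u_U^2$. Using the elementary bound $\tfrac{aX+bY}{cX+dY}\ge\min(a/c,b/d)$ for $X,Y\ge 0$, the Rayleigh quotient is at least
\[
\lambda_1(\Omega^*)\,\min\!\Bigl(\tfrac{(1-\eps_1)^2}{1+\theta},\;\tfrac{c_0^2(\beta'/\delta')^{2/N}}{1+\theta^{-1}}\Bigr);
\]
the first term is $\ge(1-\eps)\lambda_1(\Omega^*)$ by the choice of $\eps_1,\theta$, and fixing $\delta'\le\beta'\bigl(c_0^2/((1-\eps)(1+\theta^{-1}))\bigr)^{N/2}$ makes the second term comply as well. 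Taking the infimum over such $u$ via \eqref{eq:non-van lambda1} produces \eqref{eq:faber}. The principal obstacle is that the almost-Euclidean isoperimetric inequality \eqref{eq:local eucl pi in fk} is only valid for sets avoiding $U_{\eps,\eta}$ in a controlled way, and hence fails precisely for the superlevel sets $\{u>t\}$ that concentrate in the bad region; the resolution is to observe that any such ``problematic'' superlevel set is automatically very small, so that the Euclidean Faber--Krahn eigenvalue of the tiny ball $B_U$ blows up like $(\beta'/\delta')^{2/N}$ and compensates the weaker isoperimetric constant $c_0$ once $\delta'$ is taken sufficiently small.
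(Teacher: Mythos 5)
Your proposal is correct, and it implements the same underlying strategy as the paper (small superlevel sets may concentrate in the bad region, but then a crude bound with a blow-up in $\delta'$ compensates) through a genuinely different decomposition. The paper truncates the test function itself in the metric space, writing $u=\hat u+\tilde u$ at a level $s$ chosen so that $\mea(\{u>s\})\le 2\sqrt{\delta'}\mea(\Omega)$, treats $\hat u$ via the refined P\'olya--Szeg\H{o} inequality \eqref{eq:improved polya} and $\tilde u$ via Lemma \ref{lem:lower bound rayleigh} together with the rough Faber--Krahn inequality of Proposition \ref{prop:Faber Kran PI}, handles separately a ``Case 1'' where $\{u>0\}$ is already tiny, and closes with an explicit one-parameter minimization over $A=\|\tilde u\|_{L^2}$. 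You instead keep $u$ intact, split the $t$-integral in \eqref{eq:improved polya} at the level $s_0$ where $\mu(s_0)=(\delta'/\beta')\mea(\Omega)$, bound the weight $R(t)$ by $1-\eps_1$ below $s_0$ (Theorem \ref{thm:almost euclidean for small volumes PI}) and by $c_0$ above $s_0$ (Proposition \ref{prop: class isop} applied directly to the superlevel sets), and only then decompose the Euclidean rearrangement as $u_N^*=u_L+u_U$, using the exact first Dirichlet eigenvalue of the small ball $B_U$ to produce the compensating factor $(\beta'/\delta')^{2/N}$, finishing with Young's inequality and the min-of-ratios bound. This buys you a cleaner dependency structure: no case distinction, no use of Lemma \ref{lem:lower bound rayleigh} or Proposition \ref{prop:Faber Kran PI}, no coupling condition between $\delta$ and $\delta'$, and, crucially, your $\delta'$ still depends only on $\X,\eps,N$ (through $c_0$, $\beta'(\eps_1)$ and $\theta$), which is the point of the statement; the paper's route, by contrast, keeps the rearrangement step confined to the bottom truncation at the price of the extra case and the $\sqrt{\delta'}$ bookkeeping. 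Two small points you should make explicit: Theorem \ref{thm:eu_polyaszego} requires $\Omega\subsetneq\X$, which follows since $\delta\le w_0<\mea(\X)$; and in the degenerate case $s_0=0$ one only has $\Leb^N(B_U)\le(\delta'/\beta')\mea(\Omega)$, which still gives the needed lower bound on $\lambda_1(B_U)$ by domain monotonicity, and membership $u_U\in W^{1,2}_0(B_U)$ should be justified by approximating with the compactly supported truncations $(u_N^*-s_0-\tfrac1n)^+$. Neither affects the validity of the argument.
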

The key point of Theorem \ref{thm:faber} is that the constant $\delta'$ \emph{does not depend} on $\eta$. This will be crucial in the proof of the main result, which will be done in Section \ref{sec:final}. Indeed we will eventually need to get rid of $\delta'$  by  sending $\eta \to 0 $ (see in particular \eqref{eq:fine}).

\begin{remark}\label{rmk:assumptions}
    Thanks to  Theorem \ref{th:almost-iso} we know that, given  $(\Y,\tilde \sfd,\hau^N)$ an $\RCD(K,N)$ space and  $\Omega\subset \Y$  a uniform domain,  the metric measure space $\Xdm\coloneqq (\overline\Omega,\tilde \sfd\restr{\overline \Omega}, \hau^N\restr {\overline\Omega})$ satisfies the hypotheses of the above Theorem \ref{thm:faber}. This will actually be the way in which we will apply this result in the sequel.
    \fr
\end{remark}

In the proof of Theorem \ref{thm:faber} we will make use of the following elementary observation.
\begin{lemma}\label{lem:lower bound rayleigh}
		Let $\Xdm$ be a m.m.s. and  $f\in\LIP_{c}(X)$, $f\not\equiv 0 $, $f\geq 0$. It holds
		\begin{equation*}
		\frac{\int_{X}|Df|^2\,\d\,\mm}{\int_{X}|f|^2\,\d\mm}\geq \lambda_1(\{f>0\}),
		\end{equation*}
		where $\{f>0\}\coloneqq\{x\in \X\colon f(x)>0\}$.
	\end{lemma}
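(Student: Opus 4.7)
The plan is to approximate $f$ from below by truncations that are admissible test functions in the infimum defining $\lambda_1(\{f>0\})$, then pass to the limit in the Rayleigh quotient. Concretely, I would consider $f_\eps := (f-\eps)_+$ for $\eps>0$ and verify that, as $\eps \to 0^+$, its Rayleigh quotient converges to that of $f$ while remaining admissible for $\lambda_1(\{f>0\})$.

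First I would check admissibility, that is, $f_\eps \in \LIP_{bs}(\{f>0\})$. Lipschitz regularity is immediate, since $f_\eps$ is the composition of $f\in\LIP_c(\X)$ with the $1$-Lipschitz map $s\mapsto (s-\eps)_+$. The key point is the support inclusion: by continuity of $f$,
$$\supp(f_\eps) = \overline{\{f>\eps\}} \subset \{f\geq \eps\} \subset \{f>0\},$$
and $\supp(f_\eps)\subset\supp(f)$ is bounded. This is exactly the reason the truncation is needed, since $\supp(f)$ itself is generally \emph{not} contained in $\{f>0\}$.

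Next I would show that the Rayleigh quotient of $f_\eps$ converges to that of $f$. Applying the chain rule from \eqref{eq:loc and leib} with $\phi(s)=(s-\eps)_+$ (Lipschitz and vanishing at $0$) gives $|Df_\eps| = \chi_{\{f>\eps\}}|Df|$ $\mm$-a.e.; combined with the locality property (with $g\equiv 0$) which yields $|Df|=0$ $\mm$-a.e.\ on $\{f=0\}$, monotone convergence provides $\int|Df_\eps|^2\,\d\mm\to\int|Df|^2\,\d\mm$. On the denominator side, $f_\eps^2\to f^2$ pointwise with $f_\eps^2\leq f^2\in L^1(\mm)$, so dominated convergence gives $\int f_\eps^2\,\d\mm\to\int f^2\,\d\mm>0$, the positivity following from $f\not\equiv 0$.

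Finally, for every $\eps>0$ small enough so that $f_\eps\not\equiv 0$, the variational characterization \eqref{def: lambda1} yields
$$\lambda_1(\{f>0\}) \leq \frac{\int |Df_\eps|^2\,\d\mm}{\int f_\eps^2\,\d\mm},$$
and passing to the limit $\eps\to 0^+$ gives the claim. I do not anticipate a substantial obstacle here; the only potentially delicate step, already isolated above, is the support inclusion, which rests cleanly on the continuity of $f$.
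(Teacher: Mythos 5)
Your proposal is correct and follows essentially the same route as the paper: the paper also truncates, setting $\psi_n=(f-\tfrac1n)^+$, notes these are admissible competitors in \eqref{def: lambda1} supported in $\{f>0\}$, and passes to the limit using the locality of the minimal weak upper gradient. Your verification of the support inclusion and the convergence of numerator and denominator matches the paper's argument in all essential points.
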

	\begin{proof}
		Let $\psi_n\coloneqq(f-\frac{1}{n})^{+}$. Then $\psi_n\in \LIP_{c}(\{f>0\})$  and for $n\in \mathbb{N}$ sufficiently large, $\psi_n\not\equiv 0$. Then for $n$ large $\psi_n$ is a competitor in \eqref{def: lambda1}, so 
		\begin{equation*}
		\lambda_1(\{f>0\})\leq \liminf_{n\to +\infty}\frac{\int_{X}|D\psi_n|^2\,\d\mm}{\int_{X}|\psi_n|^2\,\d\mm}=
  \liminf_{n\to +\infty}\frac{\int_{\{f\ge 1/n\}}|Df|^2\,\d\mm}{\int_{\{f\ge 1/n\}}|f-1/n|^2\,\d\mm}=\frac{\int_{X}|Df|^2\,\d\mm}{\int_{X}|f|^2\,\d\mm},
		\end{equation*}
		where the first equality follows   from the locality of the weak upper gradient. 
	\end{proof}
 	
We are now ready to prove the Faber-Krahn inequality for small volumes.
\begin{proof}[Proof of Theorem \ref{thm:faber}]
	Fix $\eps\in(0,1)$ and $\eta>0$. Let $ \beta=\beta(\X,\frac{\eps}{2},N,\eta)>0, \beta'=\beta'(\frac{\eps}{2})>0, U_{\frac{\eps}{2},\eta}\subset \X$ be respectively the constants and the set as given in Theorem \ref{thm:almost euclidean for small volumes PI} and recall that $\mea(U_{\frac{\eps}{2},\eta})<\eta.$ In the following we will simply write $\beta$ and $\beta'$ to denote these constants and write $U$ to denote the set $U_{\frac{\eps}{2},\eta}$.
	
	Let  $\delta,\delta'\in(0,1)$ be constants small enough to be chosen later and in such a way that  $\delta$ will depend in the end only on $ \X, \eps,N,\eta$, while $\delta'$ only on $\X,N,\eps.$  
 
	Fix $\Omega\subset \X$ open such that
	$$\mea(\Omega)\le \delta, \quad \frac{\mea(\Omega\cap U)}{\mea(\Omega)}\le \delta',$$
 and define $\Omega^*\coloneqq B_r(0)\subset \rr^N$ where $r>0$ is so that $\Leb^N(\Omega^*)=\mea(\Omega).$ Up to choosing $\delta<\mea(\X)$ we can also assume that $\Omega \neq \X.$
 
	Let $u\in \LIP_c(\Omega)$ be a competitor in the infimum of \eqref{eq:non-van lambda1}. In particular $\ u\not\equiv 0$,  $u\ge 0$ and $\lip(u)\neq0$ $\mm$-a.e.\  in $\{u>0\}$.
 
	We divide two cases:
	
\noindent \textsc{Case 1}: $\mea(\{u>0\})\le3\sqrt{\delta'}\mea(\Omega)$. From Lemma \ref{lem:lower bound rayleigh} and Proposition \ref{prop:Faber Kran PI} we have 
	\begin{equation}\label{eq:case1 proof}
	\frac{\int_\X |D u|^2 \d\mm}{\int_\X u^2\d\mm}\geq \lambda_1(\{u>0\}) \geq \frac{C(\X,N) }{ \mea({u>0})^{\frac{2}{N}}}	\geq\frac{C(\X,N) (3\sqrt{\delta'} )^{-\frac{2}{N}}}{ \mea(\Omega)^{\frac{2}{N}}}\ge \frac{2 (j_{\frac{N-2}{N}})^2 \omega_N^{\frac{2}{N}}}{\Leb^N(\Omega^*)^{\frac{2}{N}}} \overset{\eqref{eq:lambda eucl ball}}{=} 2\lambda_1(\Omega^*),
	\end{equation}
	with $C(\X,N)>0$ is the constant given by Proposition \ref{prop:Faber Kran PI}, which can be applied if $3\sqrt{\delta'}\delta \le   v_0(\X)$  (where $v_0(\X)$ is given by Proposition \ref{prop:Faber Kran PI}) and the last inequality in \eqref{eq:case1 proof} holds provided $3\sqrt{\delta'} <\frac{C(\X,N)^{N/2}}{2^{N/2} (j_{\frac{N-2}{N}})^N \omega_N}$. 
	
\noindent \textsc{Case 2}: $\mea(\{u>0\})>3\sqrt{\delta'} \mea(\Omega)$. Set  
	\begin{align*}
	s\coloneqq \sup \{ t\,\colon\, \mea(\{u>t\})\ge2\sqrt{\delta'} {\mea(\Omega)}\}
	\end{align*}
	and observe that $s>0$ and that
 \begin{equation*}
 \begin{split}
       &\mea(\{u>s\})\le2\sqrt{\delta'}\mea(\Omega),\\
     &\mea(\{u>t\})\ge2\sqrt{\delta'}\mea(\Omega), \quad \forall \, t<s.
 \end{split}
 \end{equation*}
The first one follows because $\mea(\{u>s\})=\lim _{t\to s^+} \mea(\{u>t\})\le2\sqrt{\delta'}\mea(\Omega)$, while for the second note that $t\mapsto \mea(\{u>t\})$ is monotone non-increasing.
	Set $\hat u\coloneqq u \wedge s$ and $\tilde u\coloneqq (u-s)^+$, so that $u=\hat u +\tilde u$ and $\hat u$, $\tilde u$ are in $\LIP_c(\Omega)$ and $\hat u\geq 0$, $\tilde u\geq 0$, $\hat u\not\equiv 0$, $\tilde u\not	\equiv 0$. Observe that if multiply $u$ by a constant $c>0$, $u$ still satisfies the hypotheses of \textsc{Case 2} and the number $s$ defined above gets also multiplied by $c.$ Hence also $\hat u$ gets multiplied by $c.$ Therefore, since the value $\frac{\int |D u|^2 \d\mm}{\int u^2\d\mm}$ is scaling invariant, up to multiplying $u$ by a constant we can assume that $\int_\X \hat{u}^2\d \mm=1.$ Then
	\begin{align}\label{eq:split rayleigh}
	\frac{\int_\X |Du|^2 \d\mm}{\int_\X u^2 \d\mm}=\frac{\int_\X |D \hat u|^2+|D\tilde u|^2 \d\mm}{\int_\X \hat{u}^2 \d\mm+\int_\X \tilde u^2+2 \int_\X \hat u\tilde u \d\mm}\ge \frac{\int_\X |D \hat u|^2+|D \tilde u|^2 \d\mm}{1+\int_\X \tilde u^2 \d\mm+2\sqrt{ \int_\X\tilde u^2\d\mm} }.
	\end{align}
	Using again Lemma \ref{lem:lower bound rayleigh}  applied with $f=\tilde u$  and noting that $\{\tilde u>0\}=\{u>s\}$  we have 
	\begin{equation}\label{eq:rayleigh tilde u}
	\frac{\int_\X |D \tilde u|^2 \d\mm}{\int_\X | \tilde u|^2\d\mm}\geq \lambda_1(\{u>s\}) \ge \frac{C(\X,N)}{\mea(\{u>s\})^\frac2N} \ge \frac{C(\X,N) }{ (2\sqrt{\delta'} \mea(\Omega))^{\frac{2}{N}}},
	\end{equation}
where the second inequality follows from  Proposition \ref{prop:Faber Kran PI} (which as above can be applied  provided $2\sqrt{\delta'}\delta \le v_0(\X)$).
	Moreover, since $\mea(\{u> t\})\ge 2\sqrt{\delta'}\mea(\Omega)$, for every $t < s$, we have
	\begin{equation*}
	\frac{\mea(\{u>t\}\cap U)}{\mea(\{u>t\})}\le \frac{\mea(\Omega \cap U)}{\mea(\{u>t\})}\le \frac{{\delta'}\mea(\Omega)}{2\sqrt{\delta'}\mea(\Omega)}=\frac{{\sqrt{\delta'}}}{2}<\beta', \quad\forall\, t<s,
	\end{equation*}
	provided $\delta'\le (\beta')^2$ (recall that $\beta'$ depends only on $\eps$). We also have $\mea(\{u>t\})\le \mea(\Omega)\le \delta\le \beta$,
 provided $\delta \le \beta.$ 
	Therefore we can apply Theorem \ref{thm:almost euclidean for small volumes PI}  to the set $E=\{u>t\}$ and obtain
	\[
	\Per(\{u> t\})\ge (1-\frac{\eps}{2})N\omega_N^{\frac1N}\mea(\{u>t\})^{\frac{N-1}{N}},\quad \forall\, t< s.
	\]
	We can then apply the  P\'olya-Szeg\H{o} inequality in  point $i)$ of  Theorem \ref{thm:eu_polyaszego} to the function $\hat u$ (note that assumption \eqref{eq:isop_eu} is satisfied by provided $\delta\le w_0(X)$, where $w_0(\X)$ is the constant given by Proposition \ref{prop: class isop}), to get 
 \begin{align*}
     \int |D \hat u|^2 \d \mm &= \int_{\{u\le s\}}|D u|^2\d \mm \ge(1-\frac \eps2)^2 \int_0^s \int_{\rr^N} |D u^*_N|\d\Per(\{u^*_N>t\})\d t \\
     &=(1-\frac \eps2)^2\int_{\{u^*_N\le\, s \}} |D u^*_N|^2\d\Leb^N=(1-\frac \eps2)^2\int_{\rr^N} |D (u^*_N\wedge s)|^2 \d \Leb^N,
 \end{align*}
 where $u^*_N\in \LIP_c(\Omega^*)$ is the Euclidean monotone rearrangement of $ u$ (see Definition \ref{def:eucl rearrangement}) and where in the second to last equality we used the coarea formula in the Euclidean space (see e.g.\ \cite[Theorem 18.1]{Maggi}).
	Moreover since $ u^*_N\wedge s \in \LIP_c(\Omega^*)$  we have 
	$$\int_{\rr^N}  |D (u^*_N\wedge s)|^2 \d \Leb^N\ge \lambda_1(\Omega^*)\int_{\rr^N}  |u^*_N\wedge s|^2\d \Leb^N=\lambda_1(\Omega^*)\int_\X |\hat u|^2\d \mm=\lambda_1(\Omega^*),$$
 where in the first identity we used the equimeasurability of $u$ and $u_N.$
	Hence
	\begin{equation}\label{eq:rayleigh hat u}
	\frac{\int |D \hat u|^2 \d\mm}{\int_\X \hat u^2\d\mm}\ge(1-\frac \eps2)^2 \lambda_1(\Omega^*).
	\end{equation}
	Therefore we can plug \eqref{eq:rayleigh tilde u} and \eqref{eq:rayleigh hat u} into \eqref{eq:split rayleigh} to get 
	\begin{equation*}
	\frac{\int_\X |D u|^2\d \mm}{\int_\X u^2\d \mm}\ge \frac{(1-\frac{\eps}{2})^2\lambda_1(\Omega^*)+\frac{C(\X,N)}{(2\sqrt{\delta'}\mea(\Omega))^{\frac2N}}A^2}{1+A^2+2A},
	\end{equation*}
	where $A\coloneqq \left(\int_\X \tilde u^2\d\mm\right)^\frac12.$ We now minimize in $A\in [0,\infty).$ To do so we observe that the function $f(t)\coloneqq \frac{a+bt^2}{1+t^2+2t}$, $a,b> 0,$ has derivative $f'(t)=\frac{2(bt-a)}{(1+t)^3}$. Hence $f$ has a global minimum in $[0,\infty)$ at $t=\frac ab$ of value $f(\frac ab)=\frac{a}{1+\frac ab}.$
	Therefore we obtain
	\begin{align*}
	\frac{\int_\X |D u|^2\d \mm}{\int_\X u^2\d \mm}&\ge \frac{(1-\frac{\eps}{2})^2\lambda_1(\Omega^*)}{1+(1-\frac{\eps}{2})^2\lambda_1(\Omega^*)(2\sqrt{\delta'}\mea(\Omega))^{\frac2N}C(\X,N)^{-1}}\\
    &\hspace{-2mm}\overset{\eqref{eq:lambda eucl ball}}{=}
	\frac{(1-\frac{\eps}{2})^2\lambda_1(\Omega^*)}{1+(1-\frac{\eps}{2})^2 (j_{\frac{N-2}{N}})^2 (\omega_N2\sqrt{\delta'})^{\frac2N}C(\X,N)^{-1}}.
	\end{align*}
	Choosing $\delta'$ small enough, depending only on $\eps,N$ and $\X$,  such that 
	\begin{equation*}
	\frac{(1-\frac{\eps}{2})^2}{1+(1-\frac{\eps}{2})^2 (j_{\frac{N-2}{N}})^2 (\omega_N2\sqrt{\delta'})^{\frac2N}C(\X,N)^{-1}}>1-\eps,
	\end{equation*}
	we get the conclusion. 
\end{proof}

\section{Proof of main theorem}\label{sec:final}
In this part we prove Theorem \ref{main th} and Corollaries \ref{cor: mainRCD}, \ref{cor: mainEucl} combining the results of all the previous sections. One last ingredient, contained in the next statement, is a crucial inequality relating the eigenvalue of an eigenfunction with the first Dirichlet eigenvalue of one of its nodal domains. This can be seen as a generalization of  Lemme 2 in Appendix D of \cite{BM82} proved there in the setting of Riemannian manifolds. Recall also that eigenfunctions of the Laplacian in PI spaces  are continuous (see Theorem \ref{thm:continuous eigen}).
\begin{prop}\label{prop: neum bound dir}
		Let $\Xdm$ be a bounded infinitesimally Hilbertian PI space, $U \subset \X$ be open  and   $f$ be a Dirichlet or Neumann eigenfunction of the  Laplacian in $U$ of eigenvalue $\lambda$. If $\Omega\subset U$ is a nodal domain of (the continuous representative of) $f$,  then $\Omega$ is open in $\X$ and it holds
  \begin{equation}\label{eq:NDineq}
      \lambda_1(\Omega)\leq\lambda=\frac{\int_{\Omega} |D f|^2\, \d \mm}{\int_{\Omega} f^2\,\d\mm}.
  \end{equation}
	\end{prop}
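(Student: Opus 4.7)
The strategy is to use an appropriate truncation of $f|_\Omega$ as a test function both in the Rayleigh characterization \eqref{char: lambda1} of $\lambda_1(\Omega)$ and in the eigenvalue equation for $f$ on $U$. Openness of $\Omega$ in $\X$ follows because every PI space is locally connected (Section~\ref{sec:pi}), hence so is $U$; by Lemma~\ref{lem:basic nodal} the nodal domain $\Omega$ is open in $U$, hence in $\X$, and $f$ has constant sign on $\Omega$. Up to replacing $f$ by $-f$, assume $f>0$ on $\Omega$.

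For $n\in \N$ set $v_n \coloneqq (f-1/n)^+$, which by the chain rule \eqref{eq:loc and leib} belongs to $\W(U)$ (resp.\ $\W_0(U)$ in the Dirichlet case), and define $u_n \coloneqq v_n \chi_\Omega$ extended by zero to $\X$. The key claim is that $u_n \in \W_0(\Omega)$, with $|Du_n| = |Df|\chi_{\{f>1/n\}\cap\Omega}$ by locality, and $u_n\to f\chi_\Omega$ in $\W(\X)$ as $n\to\infty$. Inside $U$, continuity of $f$ (Theorem~\ref{thm:continuous eigen}) together with $\partial\Omega\cap U\subseteq\{f=0\}$ gives that the support of $u_n$ equals $\{f\ge 1/n\}\cap\Omega$ and is separated from $\partial\Omega\cap U$ as well as from the other nodal domains. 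The portion of $\partial\Omega$ lying on $\partial U$, if any, is handled by the boundary behaviour of $f$: in the Dirichlet case $f\in\W_0(U)$ vanishes there, while in the Neumann application one has effectively $U=\X$ (so $\partial U=\emptyset$ in $\X$) via the compatibility established in Proposition~\ref{prop:components compatibility}, which allows passing to the ``extended'' nodal domain.

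To conclude, plugging $u_n$ into the eigenvalue equation for $f$ on $U$ yields $\int_U \nabla f\cdot \nabla u_n\,\d\mm = \lambda\int_U f u_n\,\d\mm$; by the chain rule $\nabla f\cdot \nabla u_n = |Df|^2\chi_{\{f>1/n\}\cap\Omega}$, and sending $n\to\infty$ via dominated convergence (using that $f$ is bounded on the bounded PI space $\X$ by continuity) gives the identity $\int_\Omega |Df|^2\,\d\mm = \lambda \int_\Omega f^2\,\d\mm$. Combined with the Rayleigh characterization \eqref{char: lambda1} applied to $u_n\in \W_0(\Omega)$ one obtains
\[
\lambda_1(\Omega) \le \lim_{n\to\infty} \frac{\int_\X |Du_n|^2\,\d\mm}{\int_\X u_n^2\,\d\mm} = \frac{\int_\Omega |Df|^2\,\d\mm}{\int_\Omega f^2\,\d\mm} = \lambda,
\]
which closes the argument. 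The main obstacle is the careful verification that $u_n\in \W_0(\Omega)$: inside $U$ this is routine by compactness of $\{f\ge 1/n\}\cap\overline\Omega$ in $U$ and the continuity of $f$, but the treatment at $\partial U$ is where the paper's essential viewpoint -- working with the domain as a metric space so that boundary effects at $\partial U$ are absorbed into the interior of $\X$ -- enters crucially.
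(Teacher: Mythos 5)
Your overall strategy is the same as the paper's: truncate $f$ at level $1/n$ on $\Omega$, show the truncations belong to $\W_0(\Omega)$, use them both as competitors in \eqref{char: lambda1} and as test functions in the eigenvalue equation on $U$, and pass to the limit. The problem is that the one genuinely nontrivial step --- the verification that $u_n=(f-1/n)^+\chi_\Omega\in\W_0(\Omega)$, which you yourself call the main obstacle --- is asserted rather than proved, and the justification you give at $\partial U$ is not valid. Membership of $f$ in $\W_0(U)$ does \emph{not} mean that $f$ ``vanishes'' pointwise on $\partial U$: it encodes boundary vanishing only in a Sobolev/capacitary sense, and at an irregular portion of $\partial U$ a Dirichlet eigenfunction need not tend to zero, so nothing prevents $\{f>1/n\}\cap\Omega$ from accumulating on $\partial U$; in particular ``separation from $\partial\Omega\cap U$ and from the other nodal domains'' is not what is needed. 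To conclude via \eqref{eq:support inside} (or any comparable mechanism) you must control the distance of $\supp(u_n)$ from \emph{all} of $\X\setminus\Omega$, including $\partial U$, or argue differently there. The paper makes this step concrete: it notes $\sfd(\supp(\psi_n),\X\setminus\Omega)>0$, writes $\psi_n=(f-\tfrac1n)^+\,\varphi(\sfd(\cdot,\X\setminus\Omega))$ for a suitable $\varphi\in\LIP(\R)$, and combines the chain and Leibniz rules \eqref{eq:loc and leib} with \eqref{eq:support inside}; an argument of this kind is exactly what your sketch omits. Moreover, for Neumann eigenfunctions you only treat the case $U=\X$ (``effectively $\partial U=\emptyset$'' via Proposition \ref{prop:components compatibility}); that suffices for the application in the paper but not for the statement as written, which allows a Neumann eigenfunction on an arbitrary open $U\subsetneq\X$.

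Two smaller inaccuracies: boundedness of $f$ does not follow from ``continuity on the bounded space'', since $f$ is only continuous on the open set $U$ (Theorem \ref{thm:continuous eigen}); fortunately it is not needed, because on $\Omega$ one has $0\le f u_n\le f^2\chi_\Omega\in L^1(\mm)$ and, by locality, $\nabla f\cdot\nabla u_n=|Df|^2\chi_{\{f>1/n\}\cap\Omega}\uparrow|Df|^2\chi_\Omega$, so monotone/dominated convergence applies exactly as in the paper, which in fact proves full $\W(\X)$-convergence $\psi_n\to f\chi_\Omega$ and then uses Cauchy--Schwarz to pass to the limit in the bilinear term, rather than invoking any $L^\infty$ bound.
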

 In \cite{BM82} (in Riemannian setting) it is  shown that the first in \eqref{eq:NDineq} is  actually an equality in the Dirichlet case, however we do not know whether the same is true also in this more general setting. Nevertheless \eqref{eq:NDineq} will be sufficient for our purposes.

Specializing  Proposition \ref{prop: neum bound dir} to the Euclidean setting we also obtain the following result which, even if not needed in the sequel, we believe it is interesting on its own. In particular it extends previous results in \cite[Proposition 1.7]{Le19} and \cite[Lemma 3.3]{BCM23}, where the same was proved respectively for $C^{1,1}$ domains and  for planar piecewise smooth domains.
\begin{cor}[Green's formula for eigenfunctions]\label{cor:rayleig-on-nodal}
    Let $\Omega\subset \rr^N$ be a uniform domain and let $f$ be a Neumann eigenfunction in $\Omega$ of eigenvalue $\lambda$. Then for every $U$ nodal domain of $f$ it holds
    \[
    \int_{U} |\nabla f|^2\, \d \Leb^N=\lambda \int_U f^2 \d \Leb^N.
    \]
\end{cor}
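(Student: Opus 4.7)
The plan is to reduce to Proposition \ref{prop: neum bound dir}, which already contains the desired identity (not just the inequality) in the general PI-space setting; the only work is translating between the Euclidean formulation of the corollary and the intrinsic metric-measure viewpoint in which that proposition is stated.

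Let $\Xdm := (\overline \Omega, |\cdot|\restr{\overline \Omega}, \hau^N\restr{\overline \Omega})$. By Theorem \ref{thm:uniform implies PI} this is a bounded infinitesimally Hilbertian PI space. First I would invoke Corollary \ref{cor:spectrum compatilbity} (using Remark \ref{rmk:eucl lapl} to match the classical Euclidean Sobolev framework on $\Omega$ with $\W(\Omega)$) to lift $f$ to an eigenfunction $\tilde f$ of the Laplacian on $\Xdm$ of the same eigenvalue $\lambda$, with $\tilde f\restr \Omega = f$ and $\tilde f$ continuous on $\overline \Omega$. Then Proposition \ref{prop:components compatibility} supplies the bijection between nodal domains: to the nodal domain $U$ of $f$ in $\Omega$ corresponds the nodal domain
\[
\tilde U \;=\; U \cup \bigl(\partial U \cap (\partial \Omega \setminus \{\tilde f = 0\})\bigr)
\]
of $\tilde f$ in $\overline \Omega$, with $\tilde U \setminus U \subset \partial \Omega$.

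Next I would apply Proposition \ref{prop: neum bound dir} to $\tilde f$ on the PI space $\Xdm$ with nodal domain $\tilde U$, which yields at once
\[
\lambda \;=\; \frac{\int_{\tilde U}|D\tilde f|^2\,\d\mmomega}{\int_{\tilde U}\tilde f^2\,\d\mmomega}.
\]
Finally I would translate both integrals back to $\rr^N$. Since $\Omega$ is uniform, Lemma \ref{lem:zero boundary} gives $\hau^N(\partial \Omega)=0$, so $\tilde U$ and $U$ differ by an $\hau^N$-null subset of $\partial \Omega$ and the integrals over $\tilde U$ and $U$ coincide. Moreover, by Theorem \ref{thm:sobolev compatibility} combined with Remark \ref{rmk:eucl lapl}, the minimal weak upper gradient $|D\tilde f|$ agrees $\hau^N$-a.e.\ on $\Omega$ with the classical Euclidean $|\nabla f|$. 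Substituting gives
\[
\int_U |\nabla f|^2 \,\d \Leb^N \;=\; \lambda \int_U f^2 \,\d \Leb^N,
\]
which is the claim. There is no genuine obstacle here: the substantive content is carried by Proposition \ref{prop: neum bound dir}, and the corollary really amounts to checking that the three compatibility results (for Sobolev norms, eigenfunctions, and nodal domains) transport the identity faithfully through the identification $\W(\Omega)\cong \W(\overline \Omega,\sfdomega,\mmomega)$. The only point that deserves care is that Proposition \ref{prop: neum bound dir} requires an ambient PI space; this is why one cannot apply it to $\Omega$ viewed as an open subset of $\rr^N$ but must instead pass to the intrinsic space $\Xdm$ where the uniformity of $\Omega$ provides the PI structure.
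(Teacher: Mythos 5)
Your proposal is correct and follows essentially the same route as the paper's proof: pass to the intrinsic space $(\overline\Omega,\sfdomega,\hau^N\restr{\overline\Omega})$, lift $f$ via Corollary \ref{cor:spectrum compatilbity}, match nodal domains via Proposition \ref{prop:components compatibility}, apply the equality in \eqref{eq:NDineq} from Proposition \ref{prop: neum bound dir}, and return to $\Omega$ using Lemma \ref{lem:zero boundary} and Theorem \ref{thm:sobolev compatibility}. No gaps.
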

\begin{proof}
Let $f$ be as in the statement and $U$ be a nodal domain of $f.$
    From Corollary \ref{cor:spectrum compatilbity} there exists $\tilde f \in L^2(\overline \Omega,\Leb^N)$ such that $\tilde f =f$ $\Leb^N$-a.e.\ in $\Omega$ and such that $\tilde f$ is an eigenfunction for the Laplacian of eigenvalue $\lambda$ in the m.m.s.\ $(\overline \Omega,\sfdomega,\Leb^N\restr{\overline \Omega})$  where $\sfd$ denotes the Euclidean distance (recall also Remark \ref{rmk:eucl lapl}). Moreover $(\overline \Omega,\sfdomega,\Leb^N\restr{\overline \Omega})$ is  an infinitesimally Hilbertian PI space thanks to Theorem \ref{thm:uniform implies PI}. Hence $\tilde f$ is continuous in $\overline \Omega$ (recall Theorem \ref{thm:continuous eigen}).   Proposition \ref{prop:components compatibility} then says that the set $\phi(U)\coloneqq U \cup (\partial U\cap \partial \Omega \setminus \{f=0\})$ is a nodal domain of $\tilde f$. Finally by Lemma \ref{lem:zero boundary} we have $\Leb^N(\partial \Omega)=0.$ Therefore applying Proposition \ref{prop: neum bound dir} to $\Xdm=(\overline \Omega,\sfdomega,\Leb^N\restr{\overline \Omega})$ and $\tilde f$ we get
    \[
     \lambda\int_{ U}  f^2\,\d\Leb^N=\lambda\int_{\phi(U)} (\tilde f)^2\,\d\Leb^N\overset{\eqref{eq:NDineq}}{=}\int_{\phi(U)} |D \tilde f|^2\, \d \Leb^N=\int_{U} |D \tilde f|^2\, \d \Leb^N=\int_{U} |\nabla f|^2\, \d \Leb^N,
    \]
    where the last equality follows by Theorem \ref{thm:sobolev compatibility}. This concludes the proof.
\end{proof}

	\begin{proof}[Proof of Proposition \ref{prop: neum bound dir}]
		Let $f$ be a Dirichlet or Neumann eigenfunction of the Laplacian in $U$ of eigenvalue $\lambda$ and let $\Omega$ be one of its nodal domains. Since $\Xdm$ is a PI space, the metric space $(\X,\sfd)$ is  locally connected, hence by Lemma \ref{lem:basic nodal} we have that $\Omega$ is open and that either $f>0$ or $f<0$ in $\Omega.$ Assume without loss of generality that $f$ is positive in $\Omega$.
		Define $\psi_n\coloneqq\left(f-\frac{1}{n}\right)^{+}\chi_{\Omega}$ and note that $\sfd(\supp(\psi_n),\X\setminus\Omega)>0.$ We claim that $\psi_n\in W^{1,2}_0(\Omega) $. To see this
		let $\varphi\in \LIP(\rr)$ satisfy $|\phi|\le 1,$ $\varphi(t)=0$  for $t\le 0$ and $\varphi(t)=1$ for $t\geq \sfd(\supp(\psi_n),\Omega^c)$. Then $\psi_n=\left(f-\frac{1}{n}\right)^{+}\varphi(\sfd(\cdot,\X\setminus\Omega))$ with $\left(f-\frac{1}{n}\right)^{+}$ in $W^{1,2}(X)$   from the chain rule (see \eqref{eq:loc and leib}) and $\varphi(\sfd(\cdot,\X\setminus\Omega))\in \LIP\cap L^\infty(\X)$. Therefore the Leibniz rule for the minimal weak upper gradient (see \eqref{eq:loc and leib}) implies that $\psi_n\in W^{1,2}(X)$. Since $\sfd(\supp(\psi_n),\X\setminus\Omega)>0,$  the claim follows (recall \eqref{eq:support inside}). From \eqref{char: lambda1} we have
		\begin{equation}\label{eq:}
		\lambda_1(\Omega)\leq \liminf_{n\to +\infty}\frac{\int_{\Omega}|\nabla \psi_n|^2\, \d \mm}{\int_{\Omega} \psi_n^2\,\d\mm}.
		\end{equation}
		Now we observe that $(\psi_n)_{n\in\mathbb{N}}$ converges to $f\chi_{\Omega}$ in $L^2(\mm)$. Indeed 
		\begin{equation*}
		\lim_{n\to +\infty}\int_{\Omega}|\psi_n-f|^2\,\d\mm\leq\lim_{n\to +\infty}\frac{1}{n^2}\mm\left(\Omega\right)=0. 
		\end{equation*}
		Moreover 
		\begin{align*}
		 \lim_{n\to +\infty}\||D\psi_n|\|^2_{L^2(X)}=\lim_{n\to +\infty}\int_{\Omega \cap \left\{ f> \frac{1}{n}\right\} }|Df|^2\,\d\mm=  \int_{\Omega}|Df|^2\,\d\mm,
		\end{align*}
		where the first equality follows from the locality of the weak upper gradient and the second by monotone convergence theorem. This implies that $f\chi_{\Omega}\in \W(\X)$ (see  e.g.\ Proposition 2.1.19 in \cite{GP20}). Again by the locality of the weak upper  gradient we have  
		\begin{equation*}
		\||D(f\chi_{\Omega})|\|_{L^2(X)}= \||Df|\|_{L^2(\Omega)}, 
		\end{equation*}
		and so $\{\psi_n\}_{n\in \mathbb{N}}$ converges to $f\chi_{\Omega}$ in $W^{1,2}(X)$. 
		In particular 
		\begin{equation*}
		\liminf_{n\to +\infty}\frac{\int_{\Omega}|\nabla \psi_n|^2\, \d \mm}{\int_{\Omega} \psi_n^2\,\d\mm}=\frac{\int_{\Omega} |\nabla f|^2\, \d \mm}{\int_{\Omega} f^2\,\d\mm}.
		\end{equation*}
		The result follows once we observe that 
		\begin{equation}\label{eq:green}
		\frac{\int_{\Omega} |\nabla f|^2\, \d \mm}{\int_{\Omega} f^2\,\d\mm}=\lambda.
		\end{equation}
		To see this   by definition of eigenfunction (both in the Dirichlet case and the Neumann case) and since $\supp(\psi_n)\subset \Omega$ one has
		\begin{equation*}
		-\int_\Omega \nabla f\cdot \nabla \psi_n\,\d\mm=\lambda\int_\Omega f \psi_n\,\d\mm\,, \quad \forall n\in \nn,
		\end{equation*}
		from which \eqref{eq:green} follows passing to the limit and noting
		\begin{equation*}
		\limsup_{n\to +\infty}\left|\int_{\Omega}\nabla f\cdot \nabla \psi_n\,\d\mm-\int_{\Omega}\nabla f\cdot \nabla f\,\d\mm\right|\leq\limsup_{n\to +\infty} \||Df|\|_{L^2(\Omega)}\||D(\psi_n-f\chi_{\Omega})|\|_{L^2(\Omega)}=0,
		\end{equation*}
  where we used the bilinearity of the scalar product, \eqref{eq:C-S} and that $|D(\psi_n-f)|=|D(\psi_n-\nchi_\Omega f)|$ $\mea$-a.e.\ in $\Omega$, by the locality.
	\end{proof}

Before proving Theorem \ref{main th} we give a precise definition of the nodal domain counting function which appears in its statement. 
\begin{definition}[Nodal domains counting functions]\label{def:counting}
    Let $\Xdm$ be an infinitesimally Hilbertian PI space and $\Omega\subset \X$ be a uniform domain (resp.\ bounded open set). The \emph{nodal domain counting function} $M_\Omega^\cN:\nn \to \nn\cup\{+\infty\} $  (resp.\  $M_\Omega^\cD:\nn \to \nn\cup\{+\infty\}$) is given by
    \[
    \begin{split}
        &M_\Omega^\cN(k):=\sup\{M(u) : u \ \textrm{Neumann eigenfunction in $\Omega$ of eigenvalue} \ \lambda_k^\cN(\Omega)\},\\
        & \big (\text{resp.\ }  M_\Omega^\cD(k):=\sup\{M(u) : u \ \textrm{Dirichlet eigenfunction in $\Omega$ of eigenvalue} \ \lambda_k^\cD(\Omega)\}\big),\quad k \in \nn,
    \end{split}
    \]
    where $\{\lambda_k^\cN(\Omega)\}_k$ (resp.\ $\{\lambda_k^\cD(\Omega)\}_k$)  are the eigenvalues of the Neumann (resp.\ Dirichlet) Laplacian in  $\Omega$, which has discrete spectrum by Corollary \ref{cor:spectrum compatilbity} (resp.\ by observation \eqref{eq:PI discrete sepctrum}), and
    where $M(u)$ is the number of nodal domains  of the continuous representative of $u$ in $\Omega$, which exists by Theorem \ref{thm:continuous eigen}.
\end{definition}

We are finally  ready to prove the main result of the note. We will first prove the Neumann case and then the Dirichlet case. The proofs are essentially identical with our approach, but to avoid confusion we decided to keep them separated.
\begin{proof}[Proof of Theorem \ref{main th} in the Neumann case]
Consider  the m.m.s.\ $(\overline \Omega,\sfd\restr {\overline \Omega }, \mmomega)$ and observe that by Theorem \ref{th:almost-iso} it satisfies the hypotheses of Theorem \ref{thm:faber} (see also Remark \ref{rmk:assumptions}). 
 Fix $\eps\in (0,1)$, $\eta>0$  and let $\delta=\delta(   \Omega, \eps,N,\eta)>0,$ $\delta'=\delta'(\Omega,\eps,N)>0$ and $U_{\eps,\eta}\subset \overline{\Omega}$ be the constants and the set given  by Theorem \ref{thm:faber} applied to $(\overline \Omega,\sfd\restr {\overline \Omega }, \mmomega)$.  Recall that $\mea(U_{\eps,\eta})\le \eta.$
 
 Let $u\in{W^{1,2}(\Omega)}$ be a Neumann eigenfunction  in $\Omega$  of eigenvalue $\lambda_k^\cN(\Omega)>0$. Let $\{\Omega_i\}_{i=1}^{M(u)}$ an enumeration of the  nodal domains of its continuous representative. Note that the nodal domains are countable because they are open, since $(\X,\sfd)$ is locally connected (recall Lemma \ref{lem:basic nodal}). However a priori it could be that $M(u)=+\infty.$
Thanks to Corollary \ref{cor:spectrum compatilbity}, we know that there exist $\tilde u\in W^{1,2}(\overline \Omega,\sfd\restr {\overline \Omega }, \mmomega)$ eigenfunction of the Laplacian in $(\overline \Omega,\sfd\restr {\overline \Omega }, \mmomega)$ of the same eigenvalue  $\lambda_k^\cN(\Omega)$ and such that $\tilde u\restr\Omega=u.$ By Theorem \ref{thm:continuous eigen} it holds that $\tilde u$ has a continuous representative in $\overline \Omega.$
Hence thanks to Proposition \ref{prop:components compatibility} we deduce that $\tilde u$ has the same number of nodal domains  $\{\tilde \Omega_i\}_{i=1, \dots, M( u)}$, where $\tilde \Omega_i \subset \overline \Omega$. 
 Thanks to Theorem \ref{th:almost-iso} we can now apply Proposition \ref{prop:Faber Kran PI} and Proposition \ref{prop: neum bound dir} to the m.m.s.\  $(\overline \Omega,\sfd\restr {\overline \Omega }, \mmomega)$ and deduce that for any $i\in{1,\dots, M( u)}$ it holds 
 \begin{equation}\label{eq:final ineq}
      \lambda_k^\cN(\Omega)\geq \lambda_1(\tilde \Omega_i),\quad \lambda_1(\tilde \Omega_i)\geq \frac{C( \Omega)}{\mm(\tilde\Omega_i)^{\frac{2}{N}}},
 \end{equation}
where $\lambda_1(\tilde \Omega_i)$  is the first eigenvalue of the Dirichlet Laplacian computed in the m.m.s.\ $(\overline \Omega,\sfd\restr {\overline \Omega }, \mmomega)$ (see Definition \ref{def: lambda1}) and  where $C(\Omega)>0$ is a  constant depending only on $\Omega$ and $N.$
Combining the two inequalities above we deduce that $M(u)<+\infty$ and that
\begin{align*}
M(u)\left(\frac{C(\Omega)}{\lambda_k^\cN(\Omega)}\right)^{\frac N2}\le \sum_{i=1}^{M( u)} \left(\frac{ C(\Omega)}{\lambda_1(\tilde \Omega_i)}\right)^{\frac N2}\le \sum_{i=1}^{M( u)}\mm(\tilde\Omega_i)\le \mm(\overline \Omega),
\end{align*}
because the sets $\tilde \Omega_i$ are pairwise disjoint.
We define three sets:
\[
\cal S_1\coloneqq\{i\in 1, ...,M(u) \ : \  \mm(\tilde\Omega_i)\ge \delta \}\, ,
\]
\[
\cal S_2\coloneqq\{i\in 1, ...,M(u) \ : \  \mea(\tilde \Omega_i\cap U_{\eps,\eta})\ge \delta' \mea(\tilde \Omega_i)\}\, ,
\]
\[
\cal S_3\coloneqq\{i\in 1, ...,M(u) \ : \   \mea(\tilde\Omega_i)\le \delta , \mea(\tilde\Omega_i\cap U_{\eps,\eta})\le \delta' \mea(\tilde\Omega_i) \}\, .
\]
Notice that $\cal S_1 \cup \cal S_2 \cup \cal S_3=\{1,...,M(u)\}$.  Clearly
\begin{equation}\label{eq:card1}
	\#\cal S_1 \le \frac{\mm(\overline{\Omega})}{\delta}.
\end{equation}
On the other hand, similarly as above, using \eqref{eq:final ineq}  
\[
\#\cal S_2 \cdot  \delta'\left(\frac{ C(\Omega)}{\lambda_k^\cN(\Omega)}\right)^{N/2} \le \sum_{i \in \cal S_2} \delta' \mm(\tilde\Omega_i)\le  \sum_{i \in \cal S_2}\mea(\tilde\Omega_i\cap U_{\eps,\eta})\le \mm(U_{\eps,\eta})\le \eta\, ,
\]
and thus 
\begin{equation}\label{eq:card2}
	\#\cal S_2 \le  \left( \frac{ \lambda_k^\cN(\Omega)}{C({\Omega})}\right)^{\frac{N}{2}}\frac{\eta}{\delta'}.
\end{equation} 
Finally by Theorem \ref{thm:faber} it holds that $\lambda_1(\tilde \Omega_i)\ge \lambda_1(\Omega_i^*)(1-\eps)$ for every $i \in \cal S_3$, where  $\Omega_i^*\coloneqq B_r(0)\subset \rr^N$ is the ball satisfying $\mm(\tilde \Omega_i)=\Leb^N(\Omega_i^*)$. Therefore for every $i \in \cal S_3$
\begin{equation*}
	\lambda_k^\cN(\Omega)\overset{\eqref{eq:final ineq}}{\ge}\lambda_1(\tilde \Omega_i)\ge \lambda_1( \Omega_i^*)(1-\eps)\overset{\eqref{eq:lambda eucl ball}}{=}(1-\eps) \left(\frac{\omega_N}{\Leb^N(\Omega_i^*)}\right)^{\frac 2N}j^2_{\frac{(N-2)}{N}}=(1-\eps) \frac{\alpha_N }{\mea(\tilde \Omega_i)^{\frac 2N}},
\end{equation*}
where we have put $\alpha_N:=\omega_N^{2/N}j^2_{\frac{(N-2)}{N}}$. This leads to
\begin{equation}\label{eq:card3}
	\#\cal S_3 \le \left(\frac{\lambda_k^\cN(\Omega)}{(1-\eps) \alpha_N}\right)^{\frac N2}\mm(\overline{\Omega}).
\end{equation}
 Combining \eqref{eq:card1}, \eqref{eq:card2} and \eqref{eq:card3} we reach
\[
\frac{M( u)}{k}\le \frac{\#\cal S_1+\#\cal S_2+\#\cal S_3}{k}\leq \frac{\mm(\overline{\Omega})}{k\delta(   \Omega, \eps,N,\eta)}+  \frac{ \lambda_k^\cN(\Omega)^{\frac{N}{2}}}{k}\frac{1}{C({\Omega})^{\frac{N}{2}}}\frac{\eta}{\delta'(\Omega,\eps,N)}
+ \left(\frac{\lambda_k^\cN(\Omega)}{(1-\eps) \alpha_N}\right)^{\frac N2}\frac{\mm(\overline{\Omega})}{k}.
\]
Note that the right hand side is independent of $u$, hence we can take the supremum among all Neumann eigenfunctions in $\Omega$ of eigenvalue $\lambda_k^\cN(\Omega)$ and obtain
\begin{equation*}
    \frac{M_\Omega^\cN(k)}{k}\le \frac{\mm(\overline{\Omega})}{k}\frac{1}{\delta(   \Omega, \eps,N,\eta)}+ 
\frac{ \lambda_k^\cN(\Omega)^{\frac{N}{2}}}{k}\frac{1}{C({\Omega})^{\frac{N}{2}}}\frac{\eta}{\delta'(\Omega,\eps,N)}+ \left(\frac{\lambda_k^\cN(\Omega)}{(1-\eps) \alpha_N}\right)^{\frac N2}\frac{\mm(\overline{\Omega})}{k},
\end{equation*}
where $M_\Omega^{\cN}(k)$ is as in Definition \ref{def:counting}.
Passing to the $\limsup$ as $k \to +\infty$, using \eqref{eq:weyl},

\begin{equation}\label{eq:fine}
\limsup_{k \to +\infty }\frac{M_\Omega^{\cN}(k)}{k}\le  \frac{(2\pi)^N}{\omega_N\mm(\overline \Omega)}\frac{1}{C(\overline{\Omega})^{\frac{N}{2}}}\frac{\eta}{\delta'(\Omega,\eps,N)}+ \frac{(2\pi)^N}{\omega_N} \frac{1}{\left((1-\eps) \alpha_N\right)^{\frac N2}},
\end{equation}
sending first $\eta \to 0$ and then $\eps \to 0$ (note that both $\delta'$ and $\eps$ are independent of $\eta$) we conclude that 
\begin{equation*}
\limsup_{k \to +\infty }\frac{M_\Omega^{\cN}(k)}{k}\le  \frac{(2\pi)^N}{\omega_N\alpha_N^{\frac N2}}=\frac{(2\pi)^N}{\omega^2_N j^N_{\frac{(N-2)}{N}}}<1,
\end{equation*}
where for the last inequality we refer to \cite[Lemme 9]{BM82}.
\end{proof}

\begin{proof}[Proof of Theorem \ref{main th} in the Dirichlet case]
 Let $u\in{W^{1,2}_0(\Omega)}$ be a Dirichlet eigenfunction  in $\Omega$  of eigenvalue $\lambda_k^\cD(\Omega)$.
Without loss of generality we can assume that $\Xdm$ satisfies the hypotheses of Theorem \ref{thm:faber}. Otherwise we can take a uniform domain $U\subset \X$ such that $\Omega \subset U$ (which exists by \cite{Raj20}) and replace $\Xdm$ with $(\overline U,\sfd\restr {\overline U },\mm\restr U)$, which by Theorem \ref{th:almost-iso} satisfies the hypotheses of Theorem \ref{thm:faber}. Moreover it is a direct verification  that the function $u\in L^2(\Omega)$  remains a Dirichlet eigenfunction  in $\Omega$  of eigenvalue $\lambda_k^\cD(\Omega)$ also in the new space $(\overline U,\sfd\restr {\overline U },\mm\restr U)$. Indeed  $\W_0(\Omega)$ viewed as a subset of $L^2(\Omega)$ in the space $\Xdm$ coincides with $\W_0(\Omega)$ viewed as a subset of $L^2(\Omega)$ in the space $(\overline U,\sfd\restr {\overline U },\mm\restr U)$, and the corresponding minimal w.u.g.\ also coincide (see \cite[Prop.\ 6.4]{AMS16}).

 Fix now $\eps\in (0,1)$, $\eta>0$  and let $\delta=\delta(   \X, \eps,N,\eta)>0,$ $\delta'=\delta'(\X,\eps,N)>0$ and $U_{\eps,\eta}\subset \X$ be the constants and the set given  by Theorem \ref{thm:faber} applied to $\Xdm$.  Recall that $\mea(U_{\eps,\eta})\le \eta.$
From here the proof proceeds almost verbatim as in the Neumann case, by considering the sets
\[
\cal S_1\coloneqq\{i\in 1, ...,M(u) \ : \  \mm(\Omega_i)\ge \delta \}\, ,
\]
\[
\cal S_2\coloneqq\{i\in 1, ...,M(u) \ : \  \mea( \Omega_i\cap U_{\eps,\eta})\ge \delta' \mea( \Omega_i)\}\, ,
\]
\[
\cal S_3\coloneqq\{i\in 1, ...,M(u) \ : \   \mea(\Omega_i)\le \delta , \mea(\Omega_i\cap U_{\eps,\eta})\le \delta' \mea(\Omega_i) \},
\]
where $\{\Omega_i\}_{i=1}^{M(u)}$ is an enumeration of the  nodal domains of the continuous representative of $u$, and then exploiting the inequalities
 \begin{equation*}
      \lambda_k^\cN(\Omega)\geq \lambda_1( \Omega_i),\quad \lambda_1(\Omega_i)\geq \frac{C_\X}{\mm(\Omega_i)^{\frac{2}{N}}}, \quad \text{ for all  $i\in{1,\dots, M( u)}$}
 \end{equation*}
which hold by Proposition \ref{prop:Faber Kran PI} and Proposition \ref{prop: neum bound dir} (that we can apply again by Theorem \ref{th:almost-iso}), together with
 \[
 \lambda_1( \Omega_i)\ge \lambda_1(\Omega_i^*)(1-\eps), \quad \text{for all $i \in \cal S_3$},
 \]
 ($\Omega_i^*\coloneqq B_r(0)\subset \rr^N$ being the ball satisfying $\mm( \Omega_i)=\Leb^N(\Omega_i^*)$)  that holds by Theorem \ref{thm:faber}.
\end{proof}

For completeness, we conclude with the proofs of the Corollaries \ref{cor: mainRCD} and \ref{cor: mainEucl}, even if they are essentially already included in Theorem \ref{main th}.
\begin{proof}[Proof of Corollary \ref{cor: mainRCD}]
We apply Theorem \ref{main th} with $\Omega=\X$. Notice that the choice is admissible since $\X$ is assumed to be compact, and thus $\Omega$ is trivially a uniform domain inside $\X$. From the discussion in Section \ref{sec:calculus} we know that $\Delta=\Delta_\cN$ in this situation, and thus the result follows. 
\end{proof}
\begin{proof}[Proof of Corollary \ref{cor: mainEucl}]
    The metric measure space $(\rr^N,|\cdot|,\hau^N)$, where $\hau^N$ is the $N$-dimensional Hausdorff measure is a non-collapsed $\RCD(0,N)$ (recall Remark \ref{rmk:rcd ok}). Hence the result follows applying Theorem \ref{main th}, recalling also the compatibility between Neumann eigenfunctions in the metric setting and the usual ones (see Remark \ref{rmk:eucl lapl}).
\end{proof}

  \bibliographystyle{siam}
\bibliography{courantbib} 

\end{document}